\theoremstyle{plain}
\newtheorem{theorem}{Theorem}
\newtheorem{proposition}[theorem]{Proposition}
\newtheorem{lemma}[theorem]{Lemma}
\newtheorem{corollary}[theorem]{Corollary}
\theoremstyle{definition}
\newtheorem{definition}[theorem]{Definition}
\newtheorem{example}{Example}
\theoremstyle{remark}
\newtheorem*{remark*}{Remark}
\newtheorem{remark}{Remark}
\newtheorem*{notation}{Notation}
\begin{document}

\title{A parallel iterative method for variational integration}

\author[1]{Sebasti\'an J. Ferraro\thanks{sferraro@uns.edu.ar}}
\author[2]{David Mart\'in de Diego\thanks{david.martin@icmat.es}}
\author[3]{Rodrigo Takuro Sato Martín de Almagro\thanks{rodrigo.t.sato@fau.de}}

\affil[1]{Instituto de Matem\'atica (INMABB), Departamento de Matem\'atica, Universidad Nacional del Sur (UNS) -- CONICET, Bah\'ia Blanca, Argentina \vspace*{0.2cm}}

\affil[2]{Instituto de Ciencias Matem\'aticas,  ICMAT (CSIC-UAM-UC3M-UCM)\authorcr
	Madrid, Spain \vspace*{0.2cm}}

\affil[3]{Institute of Applied Dynamics, Friedrich-Alexander-Universit\"at\authorcr Erlangen-N\"urnberg, Germany \vspace*{0.2cm}}
\date{}

\maketitle

\begin{abstract}

Discrete variational methods show excellent performance in numerical simulations of different mechanical systems. In this paper, we introduce an iterative procedure for the solution of discrete variational equations for boundary value problems. 
More concretely, we explore a parallelization strategy that leverages the capabilities of multicore CPUs and GPUs (graphics cards). We study this parallel method for higher-order Lagrangian systems, which appear in fully-actuated problems and beyond. The most important part of the paper is devoted to a precise study of different convergence conditions for these methods. We illustrate their excellent behavior in some interesting examples, namely Zermelo's navigation problem, a fuel-optimal navigation problem, interpolation problems or in a fuel optimization problem for a controlled 4-body problem in astrodynamics showing the potential of our method. 
\end{abstract}

\tableofcontents

\section{Introduction}

In this paper we propose a relaxation strategy to solving boundary value problems posed by variational integrators derived from discrete Hamilton’s principle \cite{MarsdenWest_ActaNum}. The algorithm can be implemented using a parallel computing approach, which can significantly improve its performance and simplify the way to find approximate solutions of the initial problem satisfying the boundary value conditions. Moreover, our techniques can be easily extended to more complex problems (see Example in Subsection \ref{example:interpolation}).

Parallelism is important since current hardware, namely multicore CPUs and most prominently GPUs (graphics cards), is especially designed for parallel computing. The cores in graphics cards are processing units that are simpler and slower than regular CPU cores. However, their number presently ranges from hundreds to thousands of cores per card. This allows for great performance gains via parallelization. The approach developed in our paper is scalable in the sense that once the algorithm for a given problem is written and tested on a GPU, additional or more powerful cards can be used to increase the number of cores and improve performance without changing the code.

Although our approach can be readily extended to general numerical methods for differential equations, in this paper, we restrict ourselves to numerical algorithms derived from the discrete Hamilton’s principle, called variational integrators \cite{MarsdenWest_ActaNum,hairer,blanes}.
It was in this setting where our strategy arose and it is this setting which allowed us to show convergence for a wide range of problems.

Discrete variational methods display an excellent long-term behaviour and preserve qualitative properties such as symmetries and constants of the motion, the manifold structure of the configuration space and geometric structures such as symplecticity or Poisson brackets. Since with our strategy we converge to solutions of these methods, we inherit these good properties.

Boundary value problems for general Lagrangian mechanical systems appear frequently in optimal control of mechanical systems and dynamic interpolation problems \cite{crouch-leite}. In \cite{McLachlanOffenBifurcationContinuous,McLachlanOffenBifurcationDiscrete,McLachlanOffenSymplecticBVP} the authors show that symplectic integrators and, in particular, variational integrators preserve very general types of bifurcations of Hamiltonian boundary value problems, something that standard methods generally do not achieve. Besides, variational integrators admit very natural extensions to other cases of interest such as systems on Lie groups, with external forces, holonomic and nonholonomic constraints or classical field theory, where it would be also possible to adapt the techniques developed in this paper.

The strategy is quite simple. Given a discretization of a continuous Lagragian $L\colon T Q \to \mathbb{R}$, $L_d \colon Q \times Q \rightarrow {\mathbb R}$, instead of solving its discrete Euler Lagrange equations (DEL)
\begin{equation}\label{eq:DEL}
	D_2 L_d(q_{k-1}, q_k) + D_1 L_d(q_k,q_{k+1})=0 \qquad \hbox{\bf (DEL equations)}
\end{equation}
exactly and at once, we produce a sequence $\{\bar q_{k}\}_{k=0}^N$ of points in $Q$ constructed iteratively using the following scheme: 
\begin{equation}\label{eq:pDEL.repeated}
    D_2 L_d(q_{k-1}, \bar q_k) + D_1 L_d(\bar q_k, q_{k+1}) = 0
\end{equation}
for each $k=1,\dots,N-1$, where we assume $\{q_{k}\}_{k=0}^N$ given and we solve in parallel for all $\bar q_{k}$, $1\leq k\leq N-1$. We can do this for higher-order Lagrangian theories, i.e. Lagrangians dependent on derivatives up to order $\gamma \geq 1$.

From a mathematical point of view one must ensure that the algorithm defined by Equation~\eqref{eq:pDEL.repeated} converges. In Section~\ref{sec:convergence} we show that if the Hessian of the matrix associated with the discrete action is positive-definite then our strategy converges locally to the solutions. To guarantee this, we prove two important results. The first one is Theorem~\ref{prp:discrete_positive_def} where we show that to prove convergence of the methods it is only necessary to check that the Hessian matrix of the discrete Lagrangian
\begin{equation*}
\mathrm{H}^{L_d}_k =
\left(
\begin{array}{cc}
D_{1 1} L_d(q_k,q_{k+1}) & D_{1 2} L_d(q_k,q_{k+1}) \\
D_{2 1} L_d(q_k,q_{k+1}) & D_{2 2} L_d(q_k,q_{k+1})
\end{array}
\right) = \left(
\begin{array}{cc}
     \mathcal{A}_k & \mathcal{C}_k \\
\mathcal{C}_k^\top & \mathcal{B}_k
\end{array}
\right),
\end{equation*}
is positive semi-definite and either $\mathcal{A}_k$ or $\mathcal{B}_k$ is positive-definite, for all $k=0,\dots,N-1$.

The second main result appears in Theorem~\ref{thm:Ld-from-L-defpos}, where we show when and how the properties of a continuous Lagrangian can automatically guarantee these definiteness properties for the Hessian of an approximate discrete Lagrangian.

Additionally, it should be noted that in the process of proving Theorem \ref{thm:Ld-from-L-defpos} we obtained some interesting results such as Proposition~\ref{prp:regularity}, where we check that the associated exact discrete Lagrangian for a positive-definite Lagrangian satisfies the conditions of convergence stated in Theorem~\ref{prp:discrete_positive_def}, or Proposition~\ref{prop-order}, where we rigorously derive the order of approximation of the Hessian matrix of a discrete Lagrangian that is obtained as a discretization of a continuous Lagrangian. We have added at the end of the paper an appendix with some technical results necessary to prove Theorem~\ref{thm:Ld-from-L-defpos}.
 
The power of the techniques developed in this paper is illustrated in some interesting problems related with navigation in Section \ref{sec-examples}. The first one is the classical Zermelo's navigation problem \cite{Zermelo}, a time optimal control problem where the trajectories are affected by a drift vector field (wind or water currents). We show that our methods quickly give us a set of local minimum time trajectories fixed initial and final conditions (see Section \ref{section-zermelo}). Modifications of this problem for fuel optimal navigation are also considered at the end of the paper (see also \cite{FeMaSaIFAC}) and a fuel minimization problem of a spacecraft moving under the gravitational force of three bodies (Sun-Earth and the Moon).

\section{Variational discrete equations}\label{sec:variationaldiscreteequations}

In this section, we will recall the theory behind the Hamilton's principle and variational integrators. Let $L_d\colon Q\times Q\rightarrow {\mathbb R}$ be a discrete Lagrangian derived from a discretization of a continuous Lagrangian $L\colon TQ\rightarrow {\mathbb R}$ (see \cite{MarsdenWest_ActaNum}). Moreover, we will also introduce discrete Lagrangians $L_d\colon T^{(\gamma-1)}Q\times T^{(\gamma-1)}Q\rightarrow {\mathbb R}$ derived from a $\gamma$-order Lagrangian system $L\colon T^{(\gamma)}Q\rightarrow {\mathbb R}$ following \cite{MR3562389}.
Here we denote by $T^{(\gamma)}Q$ the higher-order tangent bundle which consists of all equivalence classes of curves that agree up to their derivatives of order $\gamma$ (refer to \cite{Generalized-classical} for further details). Observe that we indistinctly denote $TQ\equiv T^{(1)}Q$.


\subsection{First-order systems}\label{section:first-order-system}
To simplify our exposition we will start with the standard case of first-order Lagrangians. 
The discrete Hamilton's principle states that for a discrete mechanical system on a configuration manifold $Q$ of dimension $n$, with a discrete Lagrangian $L_d\colon Q\times Q \to \mathbb{R}$, a sequence $\{q_k\}_{k=0}^{N}$ in $Q$ is a trajectory if and only if it satisfies the discrete Euler--Lagrange (DEL) equations~\eqref{eq:DEL} for $k=1,\dots,N-1$. These equations correspond to finding critical points of the discrete action $\sum_{k=0}^{N-1}L_d(q_k,q_{k+1})$ with fixed endpoints $q_0$ and $q_N$. 
Here, we will denote by $D_1L_d$ and $D_2L_d$ the derivatives of $L_d$ with respect to the first and second variables, respectively. 
See for instance \cite{MarsdenWest_ActaNum} and references therein.

One can start from a continuous Lagrangian $L\colon TQ\rightarrow {\mathbb R}$ and derive from it appropriate discrete Lagrangians in such a way that the DEL equations become a geometric integrator (variational integrator) for the continuous Euler--Lagrange equations
\begin{equation}\label{euler-lagrange}
\frac{d}{dt}\left( \frac{\partial L}{\partial \dot{q}}\right)- \frac{\partial L}{\partial q}=0
\end{equation}
where $(q, \dot{q})$ denotes local coordinates on $TQ$ induced by a system of coordinates $(q)$ on $Q$.

Starting from a continuous Lagrangian and somehow deriving a discrete Lagrangian, the DEL equations automatically provide a numerical integrator for the continuous Euler--Lagrange system, known as a variational integrator \cite{MarsdenWest_ActaNum}. Constructing numerical integrators from a discretization of Hamilton's principle instead of directly discretizing Equations~\eqref{euler-lagrange} implies that variational integrators are geometric integrators, i.e. they posses preservation of symplecticity, almost-preservation of energy and discrete momentum conservation \cite{MarsdenWest_ActaNum}.

Hence, given a regular Lagrangian function $L\colon TQ \to \mathbb{R}$, we define a discrete Lagrangian $L_d$ as an approximation of the exact discrete Lagrangian defined from the action of the continuous Lagrangian given by 
\[
L_{d}^{e}(q_0,q_1)=\int^h_0 L(q(t), \dot{q}(t))\, dt
\]
where $q\colon [0, h]\rightarrow Q$ is the unique solution of the Euler--Lagrange equations~\eqref{euler-lagrange} with initial and final conditions $q_0=q(0)$ and $q_1=q(h)$.  The discrete Lagrangian $L_d^e\colon TQ\rightarrow {\mathbb R}$ is known as the exact discrete Lagrangian and it  is well defined for a small enough time step $h$ and points $q_0$ and $q_1$ sufficiently close (see \cite{Hartman,MarsdenWest_ActaNum}).
\begin{definition}\label{order}
	Let $L_{d}\colon Q\times Q\rightarrow {\mathbb R}$ be a discrete Lagrangian. We say that
	$L_{d}$ is a discretization of order $r$ if there exist an
	open subset $U_{1}\subset TQ$ with compact closure and
	constants $C_1>0$, $h_1>0$ so that
	\begin{equation*}
	\lvert L_{d}(q(0),q(h))-L_{d}^{e}(q(0),q(h))\rvert\leq C_{1}h^{r+1}
	\end{equation*} for all solutions $q(t)$ of the second-order Euler--Lagrange equations with initial conditions $(q_0,\dot{q}_0)\in U_1$ and for all $h\leq h_1$.
\end{definition}

In \cite{MarsdenWest_ActaNum} and \cite{patrick} it is shown that if we have a discretization of order $r$ of the exact discrete Lagrangian then we obtain a numerical integrator for the Euler--Lagrange equations of a regular Lagrangian function $L\colon TQ\rightarrow {\mathbb R}$ with convergence order $r$. We take a discrete Lagrangian $L_{d}\colon Q\times Q\to {\mathbb R}$ 
as an approximation of $L_d^{e}$ and the order can be calculated by expanding the expressions for
$L_d(q(0),q(h))$ in a Taylor series in $h$ and comparing this to the same expansions for the exact Lagrangian.
If both series agree up to $r$ terms, then the discrete Lagrangian is of order $r$ (see \cite{MarsdenWest_ActaNum, Leok-shingel} and references therein).

\subsection{Higher-order systems}
Higher-order Lagrangian theories are systems where the Lagrangian depends on higher derivatives, i.e. velocities, accelerations and so on up to order $\gamma$. In \cite{MR3562389} a generalization of discrete variational calculus for higher-order Lagrangian systems $L\colon T^{(\gamma)} Q\rightarrow {\mathbb R}$ with $\gamma\geq 1$ was proposed. In particular, these results were proven to be useful for the discretization of fully actuated optimal control problems and interpolation problems. We recall the idea briefly. Consider a  Lagrangian $L\colon T^{(\gamma)}Q\to {\mathbb R}$. 
In the sequel, we will denote a point\footnote{Be sure to notice the distinction between the notations $q^{[\gamma]}$, which denotes a point, and $q^{(\gamma)}$, which denotes derivatives or adapted coordinates.} in $T^{(\gamma)} Q$ by $q^{[\gamma]}$, which in adapted local coordinates is $q^{[\gamma]}=(q^i \equiv q^{(0)\, i}, \dot{q}^i \equiv q^{(1)\, i}, ..., q^{(\gamma-1)\,i}, q^{(\gamma)\,i})$, $i = 1,...,n=\dim Q$

Observe that  when $\gamma=1$ we get $q^{(0)}=q\in Q$, and $q^{(1)}=\dot{q}$ which covers the case analyzed in Subsection \ref{section:first-order-system} regarding the standard case of discrete variational calculus. For $\gamma=2$ we get $q^{[2]}=(q, \dot{q}, \ddot{q})\in T^{(2)}Q$; as we will see in the examples section, this will be  useful in the optimal control of fully actuated systems and interpolation problems.

From now on, we will work with regular continuous Lagrangian systems, in accordance with the following definition. Note that we sometimes use the term ``continuous Lagrangian'' to distinguish it from the discrete Lagrangian. Also, $\gamma$-th order continuous and discrete Lagrangians are assumed to be at least of class $C^{2\gamma}$.
\begin{definition}\label{def:hessiano}
A (continuous) $\gamma$-th order Lagrangian, $L\colon T^{(\gamma)} Q \to \mathbb{R}$, with $\gamma\geq 1$ is said to be \textbf{regular} if the Hessian matrix
\begin{equation*}
\mathcal{W}(q^{[\gamma]}) = \left( \frac{\partial^2 L}{\partial q^{(\gamma)\,i} \partial q^{(\gamma)\,j}} \right)_{i,j = 1}^{n}
\end{equation*}
is regular.
\end{definition}

We say that a curve $q\colon [t_0, t_N]\to Q$ is {\bf critical} for the action 
\begin{equation} \label{ElasticSplines}
\mathcal{J} [q]:= \int_{t_0}^{t_N} L(q^{[\gamma]}(t))\; dt
\end{equation}
with $q^{[\gamma]}(t)=(q(t), \frac{d q}{dt}(t), \ldots, \frac{d q^{\gamma}}{dt^{\gamma}}(t))
$, where  $q^{[\gamma-1]}(0)$ and $q^{[\gamma-1]}(h)$ are fixed boundary conditions, if $\frac{\partial}{\partial \lambda}|_{\lambda=0}\mathcal{J} [q_\lambda]=0$ for all deformations $q_\lambda(t)$ of $q(t)$, $\lambda\in(-\epsilon,\epsilon)$, with fixed endpoints up to derivative order $\gamma-1$, that is
$q_0(t)=q(t)$ for $t\in [t_0, t_N]$, $q_\lambda^{(\alpha)}(u)=q^{(\alpha)}(u)$, $\alpha=0,\dots,\gamma-1$, $u\in\{t_0,t_N\}$.

A curve $q^{[\gamma]}$ is critical if and only if it is a solution of the Euler--Lagrange equations given by the system of $2\gamma$-order differential equations
\begin{equation}\label{gamma}
\sum_{\alpha=0}^{\gamma}
(-1)^{\alpha}\frac{d^{\alpha}}{dt^{\alpha}}\left(
\frac{\partial L}{\partial q^{(\alpha)}}\right)=0
\end{equation}

For the $\gamma$-th order case, a discrete Lagrangian  is given as a function 
$L_d\colon T^{(\gamma-1)}Q\times T^{(\gamma-1)}Q\rightarrow {\mathbb R}$ (see \cite{MR3562389}). The discrete action is a sum
\[
\sum_{k=0}^{N-1}L_d(q^{[\gamma-1]}_k,q^{[\gamma-1]}_{k+1})
\]
Observe that $T^{(\gamma-1)}Q$ is playing exactly the same role  as $Q$ in the DEL equations described in \eqref{eq:DEL}. The condition that a sequence $\{q_{k}^{[\gamma-1]}\}_{k=0}^N$ of points in $T^{(\gamma-1)}Q$ be critical for the discrete action, with fixed endpoints $q_{0}^{[\gamma-1]}$ and $q_{N}^{[\gamma-1]}$,
is equivalent to the equations
\begin{equation}\label{eq-deloc}
D_{2} L_d(q_{k-1}^{[\gamma-1]},q_k^{[\gamma-1]}) + D_{1} L_d(q_{k}^{[\gamma-1]},q_{k+1}^{[\gamma-1]})=0\qquad \hbox{\bf ($\boldsymbol{\gamma}$-th order DEL equations)}.
\end{equation}

\begin{definition}
\label{def:discrete_hessian}
A discrete  Lagrangian, $L_d\colon T^{(\gamma-1)} Q \times T^{(\gamma-1)} Q \to \mathbb{R}$, is said to be \textbf{regular} if its associated block matrix
\begin{equation*}
\mathcal{W}_d(q_0^{[\gamma-1]}, q_1^{[\gamma-1]}) 
= \left(
\begin{array}{cccc}
\frac{\partial^2 L_d}{\partial q_0 \partial q_1} & \frac{\partial^2 L_d}{\partial q_0 \partial \dot{q}_1} & \cdots & \frac{\partial^2 L_d}{\partial q_0 \partial q^{(\gamma-1)}_1}\\ 
\frac{\partial^2 L_d}{\partial \dot{q}_0 \partial q_1} & \frac{\partial^2 L_d}{\partial \dot{q}_0 \partial \dot{q}_1} & \cdots & \frac{\partial^2 L_d}{\partial \dot{q}_0 \partial q^{(\gamma-1)}_1}\\ 
\vdots & \vdots & \ddots & \vdots\\ 
\frac{\partial^2 L_d}{\partial q^{(\gamma-1)}_0 \partial q_1} & \frac{\partial^2 L_d}{\partial q^{(\gamma-1)}_0 \partial \dot{q}_1} & \cdots & \frac{\partial^2 L_d}{\partial q^{(\gamma-1)}_0 \partial q^{(\gamma-1)}_1}
\end{array}
\right)
\end{equation*}
is regular. 
\end{definition}

Starting from a continuous Lagrangian $L$, we define the exact discrete Lagrangian as
\begin{equation}\label {exact-gamma}
L_d^{e}(q^{[\gamma-1]}_0, q^{[\gamma-1]}_1)=
\int^h_0 L(q^{[\gamma]}(t))\; dt
\end{equation}
where $q\colon [0, h]\rightarrow Q$ is the unique  $C^{2 \gamma}$ solution curve of the Euler--Lagrange equations~\eqref{gamma} satisfying the boundary conditions 
$q^{[\gamma-1]}(0)=q^{[\gamma-1]}_0$ and $q^{[\gamma-1]}(h)=q^{[\gamma-1]}_1$ (see \cite{Agarwal}). This exact discrete Lagrangian is well-defined for $h$ small enough and   in a neighborhood $U_h$ of the diagonal of $T^{(\gamma-1)} Q \times T^{(\gamma-1)}Q$. We also know that $U_h$ degenerates into the diagonal for $h = 0$.

\begin{definition}\label{def:order-higher}
A discrete $\gamma$-th order Lagrangian $L_d$ is said to be an \textbf{approximation of order $\boldsymbol{r}$} (or \textbf{consistent to order $\boldsymbol{r}$}) with respect to a continuous Lagrangian $L$ if it agrees with the exact discrete Lagrangian of the latter up to order $r$, i.e., there exist an open set $U \subset T^{(2\gamma-1)} Q$ with compact closure and constants $C_U, h_U > 0$ such that
\begin{equation*}
\left\vert L_d(q^{[\gamma-1]}(0),q^{[\gamma-1]}(h)) - L_d^e(q^{[\gamma-1]}(0),q^{[\gamma-1]}(h))\right\vert \leq C_U h^{r+1}
\end{equation*}
for all solutions $q(t)$ of the Euler--Lagrange equations~\eqref{gamma} with initial values in $U$ and $h \leq h_U$.
\end{definition}

Under suitable regularity conditions, the DEL equations~\eqref{eq-deloc} can be used to find a trajectory sequentially. That is, one attempts to compute $q^{[\gamma-1]}_{k+1}$ using the previous points $q^{[\gamma-1]}_{k-1}$ and $q^{[\gamma-1]}_k$.
When solving boundary value problems with given initial and final conditions, some strategy should be adopted in order to arrive at the final desired condition. One such strategy is to apply a shooting method. For example, if $q^{[\gamma-1]}_0,q^{[\gamma-1]}_N\in T^{(\gamma-1)}Q$ and $N$ are given, one can try assigning some value to $q^{[\gamma-1]}_1$, run the sequential algorithm and compare the resulting $q^{[\gamma-1]}_N$ with the final condition; then adjust the value of $q^{[\gamma-1]}_1$ and repeat the process, until the final condition is met within a certain tolerance. However, for
optimal control problems this approach often fails to converge in practice, because of a high sensitivity of the final condition with respect to the starting guess, especially for $\gamma\geq 2$.  For this reason we propose a different, non-sequential strategy in the next section. 


\section{Parallel approach to the solution of the discrete equations}\label{sec:parallel_approach}

Consider the DEL equations \eqref{gamma}. Given $N\in \mathbb{N}$, $N\geq 2$, and given $q^{[\gamma-1]}_0,q^{[\gamma-1]}_N\in T^{(\gamma-1)}Q$, we want to find a sequence  $\{ q^{[\gamma-1]*}_k\}_{k=0}^{N}$, with $q^{[\gamma-1]*}_0=q^{[\gamma-1]}_0$, $q_N^{[\gamma-1]*}=q^{[\gamma-1]}_N$, that is a solution of \eqref{eq:DEL}. Our method starts with a sequence $\{q^{[\gamma-1]}_k\}$ chosen as the initial guess, with the only condition that $q^{[\gamma-1]}_0,q^{[\gamma-1]}_N$  are the points given, and produces a new sequence  $\{\bar q^{[\gamma-1]}_k\}$ with $\bar q^{[\gamma-1]}_0=q^{[\gamma-1]}_0$ and $\bar q^{[\gamma-1]}_N=q^{[\gamma-1]}_N$. In general, neither $\{q^{[\gamma-1]}_k\}$ nor $\{\bar q^{[\gamma-1]}_k\}$ will be a solution of \eqref{eq:DEL}, but by iterating this procedure we can approach a solution $\{q^{[\gamma-1]*}_k\}$, under certain assumptions to be specified in Section~\ref{sec:convergence}.

For each $k=1,\dots,N-1$, we find $\bar q^{[\gamma-1]}_k$ by solving a modified (``parallelized'') version of \eqref{eq:DEL}:
\begin{equation}\label{eq:pDEL}
  D_2L_d(q^{[\gamma-1]}_{k-1},\bar q^{[\gamma-1]}_k)+D_1L_d(\bar q^{[\gamma-1]}_k,q^{[\gamma-1]}_{k+1})=0  \qquad \hbox{\bf (Jacobi method)}.
\end{equation}
This means that for each triple $(q^{[\gamma-1]}_{k-1}, q^{[\gamma-1]}_k,q^{[\gamma-1]}_{k+1})$ of points in the sequence, the middle point moves to $\bar q_k$ so that the DEL equations hold for $(q^{[\gamma-1]}_{k-1}, \bar q^{[\gamma-1]}_k,q^{[\gamma-1]}_{k+1})$ (see Figure~\ref{fig:parallel_approach}). At the endpoints, we simply take $\bar q^{[\gamma-1]}_0=q^{[\gamma-1]}_0$ and $\bar q^{[\gamma-1]}_N=q^{[\gamma-1]}_N$. Computing $\bar q^{[\gamma-1]}_k$ for all $k$ completes one iteration, and the following one will use $\{\bar q^{[\gamma-1]}_k\}$ in place of $\{q^{[\gamma-1]}_k\}$. This approach is known as the nonlinear (block) Jacobi method \cite{Vrahatis2003, Axelsson_Iterative_Solution_Methods}, and we will discuss it in more detail in Section~\ref{sec:convergence}.

\begin{figure}
  \centering
  \includegraphics{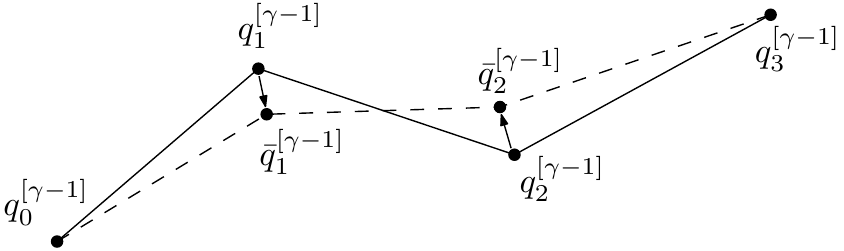}
  \caption{An iteration of the parallel method, for $N=3$.}
  \label{fig:parallel_approach}
\end{figure}

\begin{remark}
The solution $\bar q^{[\gamma-1]}_k$ of \eqref{eq:pDEL} can be found for each $k$ independently, using the data for the neighboring points from the latest iteration. Therefore, the procedure can be performed in a parallel fashion. The computed points $\bar q^{[\gamma-1]}_k$ remain unused until the next iteration.
\end{remark}

Assuming that $Q$ is a finite dimensional  vector space, a computationally less costly alternative is to replace \eqref{eq:pDEL} by a first order approximation. That is, instead of trying to solve the nonlinear system \eqref{eq:pDEL} exactly, we apply one step of the Newton--Raphson method to obtain a value for $\bar q^{[\gamma-1]}_k$, which clearly need not coincide with the exact solution of  \eqref{eq:pDEL}. This alternative update rule becomes
\begin{equation}\label{eq:linsysJN}
    \begin{split}
 \left(D_{22}L_d(q^{[\gamma-1]}_{k-1}, q^{[\gamma-1]}_{k})+D_{11}L_d(q^{[\gamma-1]}_{k}, q^{[\gamma-1]}_{k+1})\right)\cdot (\bar q^{[\gamma-1]}_k-
 q^{[\gamma-1]}_k)\\
   +D_2L_d(q^{[\gamma-1]}_{k-1}, q^{[\gamma-1]}_{k})+D_1L_d(q^{[\gamma-1]}_{k}, q^{[\gamma-1]}_{k+1})=0
  \end{split}  \qquad \hbox{\bf (Jacobi--Newton)}
\end{equation}
which means
\begin{equation}\label{eq:NewtonRaphson}
\bar q^{[\gamma-1]}_k=q^{[\gamma-1]}_k-\mathcal{D}_k^{-1}
\left(D_2L_d(q^{[\gamma-1]}_{k-1}, q^{[\gamma-1]}_{k})+D_1L_d(q^{[\gamma-1]}_{k}, q^{[\gamma-1]}_{k+1})\right),
\end{equation}
where
\begin{equation*}
\mathcal{D}_k = D_{22}L_d(q^{[\gamma-1]}_{k-1}, q^{[\gamma-1]}_{k})+D_{11}L_d(q^{[\gamma-1]}_{k}, q^{[\gamma-1]}_{k+1}).
\end{equation*}

Of course, it is necessary to assume that $\mathcal{D}_k$ is a regular matrix.
This procedure is known as the single-step Jacobi--Newton method (Section~\ref{sec:composite}). One could alternatively perform more Newton--Raphson substeps, iterating \eqref{eq:NewtonRaphson} two or more times within each Jacobi step.

These update rules are explicit and are also more suitable for parallel implementation, since the same expressions can be evaluated at all time steps simultaneously, with different values for the parameters $(q^{[\gamma-1]}_{k-1},q^{[\gamma-1]}_k,q^{[\gamma-1]}_{k+1})$. It requires solving $N-1$ linear systems of order $\gamma\dim Q$, a standard procedure for which there exist highly optimized implementations.
On the other hand, applying nonlinear solvers to \eqref{eq:pDEL} generally involves conditional statements which can cause the execution threads to diverge, that is, to execute different instructions. This can lead to a loss of performance in the parallel code.

For the single-step Jacobi--Newton method~\eqref{eq:linsysJN}, the matrix and vector coefficients involve second derivatives of $L_d$ and will typically have many common subexpressions; this can be taken into account to reduce the computational cost. It therefore makes sense to have a single procedure that computes them. The GPU hardware can apply this procedure to prepare the linear systems for all $k$ in parallel. Even though for $k=0$ and $k=N-1$ some of the derivatives are not needed (for example, $D_{11}L_d(q_0^{[\gamma-1]},q_1^{[\gamma-1]})$), the advantage of considering the common subexpressions makes this approach preferable.  In addition, the implementation of these computations can be simplified using libraries for automatic differentiation.

\section{Convergence}\label{sec:convergence}

In this section we will obtain sufficient conditions for the convergence of the iterative approach mentioned above. We will see that if the Hessian matrix of the discrete action is positive-definite at a solution, then both the Jacobi method and the Jacobi-Newton method converge locally to that solution. Afterwards, we will show that the positive-definiteness of the Hessian matrix follows from regularity conditions on the continuous Lagrangian and its discretization. 

\subsection{The Jacobi method}
The nonlinear Jacobi method is an iterative method for finding zeros of a nonlinear function $f=(f_1,\dots,f_n): \mathbb{R}^n \to \mathbb{R}^n$ (see \cite[p.\ 220]{OrtegaRheinboldt}). It is also called the method of simultaneous displacements. Starting from an initial guess $x^0=(x^0_1,\dots,x^0_n)\in \mathbb{R}^n$, the method generates a sequence $x^j\in\mathbb{R}^n$, $j=0,1,\dots$ that converges to a zero of $f$ under certain conditions. Namely, the $j$-th stage of the method consists in solving each scalar equation
\begin{equation}\label{eq:fi_for_Jacobi}
  f_k(x_1^j,\dots,x_{k-1}^j,x_k,x_{k+1}^j,\dots,x_n^j)=0, \quad k=1,\dots,n
\end{equation}
for $x_k$, independently for each $k$, and setting $x_k^{j+1}=x_k$, $k=1,\dots,n$, after all these equations are solved.

\begin{notation}
  In the literature on iterative methods it is usual to denote the successive approximations with a superscript as above. We adopt this notation for this section only, in order to discuss the convergence of the method; in the rest of the paper, we will use the previous notation $x_k \mapsto \bar x_k$ instead of $x_k^j \mapsto x_k ^{j+1}$ to denote the effect of a single iteration. 
\end{notation}

Other similar methods, such as Gauss-Seidel and successive overrelaxation (SOR), use the components of the new approximation $x^{j+1}$ as soon as they are available; however, the Jacobi method is better suited for parallel implementation. The article \cite{Vrahatis2003} gives the following result on the local convergence of the Jacobi method for finding critical points of a real-valued function. A matrix has the property $A^\pi$ mentioned in the theorem if it is block tridiagonal, possibly after conjugation by a permutation matrix, and the blocks on the diagonal are nonsingular \cite[p. 234]{Axelsson_Iterative_Solution_Methods}.

\begin{theorem}[\cite{Vrahatis2003}]\label{thm:Vrahatis}
  Let $F\colon \mathcal{D}\subset \mathbb{R}^n \to \mathbb{R}$ be twice continuously differentiable in an open neighborhood $\mathcal{S}_0\subset \mathcal{D}$ of a point $x^*\in \mathcal{D}$ for which $\nabla F(x^*)=0$, and suppose that the Hessian $\mathrm{H}(x^*)$ of $F$ is positive-definite with the property $A^\pi$. Then there exists an open ball $\mathcal{S}\subset \mathcal{S}_0$ centered at $x^*$ such that any sequence $\{x^j\}_{j=0}^\infty$, $x^0\in \mathcal{S}$, generated by the nonlinear Jacobi process converges to $x^*$.
\end{theorem}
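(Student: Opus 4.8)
The plan is to recognize the nonlinear Jacobi process as a fixed-point iteration $x^{j+1}=G(x^j)$ near $x^*$ and to reduce convergence to Ostrowski's point-of-attraction theorem: if $G$ is differentiable at a fixed point $x^*$ and the spectral radius $\rho(G'(x^*))<1$, then $x^*$ attracts every orbit starting in a sufficiently small ball $\mathcal{S}$ (see \cite{OrtegaRheinboldt}). Thus the proof reduces to two tasks: constructing $G$ and identifying its linearization at $x^*$ as the classical Jacobi iteration matrix, then showing that this matrix has spectral radius below one.

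First I would define $G$ componentwise through the implicit equations \eqref{eq:fi_for_Jacobi} with $f=\nabla F$. Writing $H=\mathrm{H}(x^*)$ and using positive-definiteness, each diagonal (block) entry of $H$ is a principal submatrix of $H$, hence nonsingular; this is exactly the nonsingularity of the diagonal blocks built into property $A^\pi$. Since $F$ is twice continuously differentiable, $f$ is of class $C^1$, so the implicit function theorem yields, for $x$ in a neighborhood of $x^*$, a unique $C^1$ solution $y_k=G_k(x)$ of the $k$-th equation, with $G(x^*)=x^*$ because $\nabla F(x^*)=0$. Differentiating the identity $f_k(\dots,G_k(x),\dots)=0$ at $x^*$ gives $\partial G_k/\partial x_m=-H_{km}/H_{kk}$ for $m\neq k$ and $\partial G_k/\partial x_k=0$, so that $G'(x^*)=I-D^{-1}H=:M_J$, the Jacobi iteration matrix associated with the splitting of $H$ into its (block) diagonal part $D$ and its off-diagonal part.

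The crux is to prove $\rho(M_J)<1$, and this is where both hypotheses genuinely enter. Positive-definiteness alone does not force convergence of Jacobi, but it does force convergence of Gauss--Seidel by the Ostrowski--Reich theorem, i.e.\ $\rho(M_{GS})<1$. Property $A^\pi$ --- the block-tridiagonal (consistently ordered) structure, possibly after conjugation by a permutation --- then supplies the missing link: for consistently ordered matrices the nonzero eigenvalues of $M_J$ occur in pairs $\pm\mu$ and satisfy the functional relation $\rho(M_{GS})=\rho(M_J)^2$ (see \cite{Axelsson_Iterative_Solution_Methods}). Combining the two yields $\rho(M_J)^2=\rho(M_{GS})<1$, hence $\rho(M_J)<1$.

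With $\rho(G'(x^*))=\rho(M_J)<1$ and $G$ of class $C^1$ near the fixed point $x^*$, Ostrowski's theorem produces the attracting ball $\mathcal{S}\subset\mathcal{S}_0$, completing the argument. I expect the eigenvalue relation $\rho(M_{GS})=\rho(M_J)^2$ for (block-)tridiagonal matrices to be the main obstacle, since it is the only step that truly uses property $A^\pi$ and without which positive-definiteness is insufficient for Jacobi convergence; care is also needed to check that the permutation bringing $H$ to consistently ordered form does not disturb the identification of $G'(x^*)$ with the Jacobi iteration matrix.
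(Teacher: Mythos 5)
This theorem is not proved in the paper at all: it is quoted verbatim from \cite{Vrahatis2003}, so there is no in-paper argument to compare against. Your proposal is, however, a correct and essentially complete reconstruction of the standard proof of this result. The linearization step is right: positive-definiteness gives $H_{kk}(x^*)>0$, the implicit function theorem produces a $C^1$ iteration map $G$ with $G(x^*)=x^*$ and $G'(x^*)=I-D^{-1}H$, and Ostrowski's point-of-attraction theorem reduces everything to $\rho(I-D^{-1}H)<1$. Your route to that spectral bound (Ostrowski--Reich for Gauss--Seidel plus Young's eigenvalue relation $\rho(M_{GS})=\rho(M_J)^2$ for consistently ordered, i.e.\ block tridiagonal, matrices) is the classical one and correctly identifies that \emph{both} hypotheses are needed; note that the paper itself reaches the same spectral bound for its Jacobi--Newton variant by citing Theorem 6.38 of \cite{Axelsson_Iterative_Solution_Methods} directly (see the proof of Proposition~\ref{prop:Hpd_implies_JN_conv}), which packages your Gauss--Seidel detour into a single statement. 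Two minor points worth making explicit if you were to write this up: the permutation in property $A^\pi$ leaves $\rho(M_J)$ unchanged because the Jacobi splitting, unlike Gauss--Seidel, is invariant under simultaneous row-column (block) permutation; and the phrase ``any sequence generated by the nonlinear Jacobi process'' must be read as selecting, at each stage, the locally unique root furnished by the implicit function theorem, since the scalar equations \eqref{eq:fi_for_Jacobi} may have other roots far from $x^*$.
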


Now consider a block partitioning of the equation $f(x)=0$, meaning that we regard
$\mathbb{R}^n\equiv \bigoplus_{k=1}^m \mathbb{R}^{d_k}$ and write $x=(\xi_1,\dots,\xi_m)$, $f=(\phi_1,\dots,\phi_m)$ in agreement with this splitting, that is, $d_k\in\mathbb{N}$, $\xi_k\in \mathbb{R}^{d_k}$ and $\phi_k\colon \mathbb{R}^n \to \mathbb{R}^{d_k}$ for all $k$. Note that the domain and codomain of $f$ are partitioned in the same way. The $j$-th stage of the \emph{block} nonlinear Jacobi method consists in solving $\phi_k(\xi_1^j,\dots,\xi_{k-1}^j,\xi_k,\xi_{k+1}^j,\dots,\xi_m^j)=0$ for $\xi_k$, for each $k=1,\dots,m$, and setting $\xi_k^{j+1}=\xi_k$. The approach we propose in equation \eqref{eq:pDEL} is then a block nonlinear Jacobi method, with blocks of equal size $\gamma \dim Q$.
\begin{remark}
Theorem \ref{thm:Vrahatis} is stated in \cite{Vrahatis2003} for the non block partitioned case (i.e., for scalar equations and unknowns), although the same argument in their proof is valid for the \emph{block} Jacobi method.
\end{remark}

Equations~\eqref{eq-deloc} are of the form \eqref{eq-deloc} are $\nabla F=0$ where $F = \sum_{k=0}^{N-1} L_d(q^{[\gamma-1]}_k,q^{[\gamma-1]}_{k+1})$ as a function of $x=q_d^{[\gamma-1]}=(q^{[\gamma-1]}_1,\dots,q^{[\gamma-1]}_{N-1})$, and $q^{[\gamma-1]}_0$ and $q^{[\gamma-1]}_N$ are fixed. Since $L_d$ is $C^2$, then the Hessian of $F$ is symmetric and has the block tridiagonal form
  \begin{equation}\label{eq:blocktridiagonal}
\mathrm{H}(q_d^{[\gamma-1]})=
    \begin{pmatrix}
      \mathcal{D}_1 & \mathcal{C}_1 &  & \\
      \mathcal{C}_1^{\top} & \mathcal{D}_2 & \mathcal{C}_2 & & \\
          & \ddots & \ddots & \ddots &\\
          &        & \mathcal{C}_{N-3}^{\top} & \mathcal{D}_{N-2} & \mathcal{C}_{N-2}\\
          &        &         & \mathcal{C}_{N-2}^{\top} & \mathcal{D}_{N-1}
    \end{pmatrix}
  \end{equation}
  where
  \begin{align*}
\mathcal{D}_k&=D_{22}L_d(q^{[\gamma-1]}_{k-1},q^{[\gamma-1]}_k)+D_{11}L_d(q^{[\gamma-1]}_k,q^{[\gamma-1]}_{k+1}), \quad k=1,\dots,N-1,\\
\mathcal{C}_k&=D_{12}L_d(q^{[\gamma-1]}_k,q^{[\gamma-1]}_{k+1}), \quad k=1,\dots,N-2.
  \end{align*}
  The following is a direct consequence of Theorem~\ref{thm:Vrahatis}.
\begin{proposition}\label{prop:Jacobi_convergence}
  Let $q_d^{[\gamma-1]*}=(q^{[\gamma-1]*}_1,\dots,q^{[\gamma-1]*}_{N-1})$ be a solution of the DEL equations for fixed $q^{[\gamma-1]}_0$ and $q^{[\gamma-1]}_N$. If the Hessian of the discrete action, $\mathrm{H}(q_d^{[\gamma-1]*})$, is positive-definite, then the block Jacobi method converges locally to $q_d^{[\gamma-1]*}$. 
\end{proposition}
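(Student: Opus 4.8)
The plan is to cast the DEL equations as the critical-point condition $\nabla F = 0$ for the discrete action $F = \sum_{k=0}^{N-1} L_d(q^{[\gamma-1]}_k,q^{[\gamma-1]}_{k+1})$, regarded as a function of the interior variables $x = q_d^{[\gamma-1]} = (q^{[\gamma-1]}_1,\dots,q^{[\gamma-1]}_{N-1})$ with the endpoints $q^{[\gamma-1]}_0, q^{[\gamma-1]}_N$ held fixed, and then to invoke Theorem~\ref{thm:Vrahatis} in its block form (justified by the Remark following it). First I would note that $F$ is $C^2$, since $L_d$ is assumed at least $C^{2\gamma}$ with $\gamma\geq 1$, so the smoothness hypotheses of Theorem~\ref{thm:Vrahatis} hold on a suitable neighborhood $\mathcal{S}_0$ of $q_d^{[\gamma-1]*}$. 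Computing $\partial F/\partial q^{[\gamma-1]}_k$ recovers exactly the left-hand side of \eqref{eq-deloc}, so the condition $\nabla F(q_d^{[\gamma-1]*})=0$ is precisely the assumption that $q_d^{[\gamma-1]*}$ solves the DEL equations.

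Next I would identify the Hessian of $F$ at $q_d^{[\gamma-1]*}$ with the block tridiagonal matrix $\mathrm{H}(q_d^{[\gamma-1]*})$ displayed in \eqref{eq:blocktridiagonal}, whose diagonal and off-diagonal blocks are the $\mathcal{D}_k$ and $\mathcal{C}_k$ defined there. The proposition assumes this matrix is positive-definite, so the only remaining task before applying Theorem~\ref{thm:Vrahatis} is to verify the property $A^\pi$. The block tridiagonal structure is manifest from \eqref{eq:blocktridiagonal} (no conjugation by a permutation matrix is even needed), so the content of $A^\pi$ reduces to the nonsingularity of the diagonal blocks $\mathcal{D}_k$.

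The one point requiring a brief argument is this nonsingularity, and the key observation is that every diagonal block of a positive-definite matrix is itself positive-definite: for a nonzero $v\in\mathbb{R}^{\gamma\dim Q}$ placed in the $k$-th slot, the embedding $w=(0,\dots,v,\dots,0)$ gives $v^\top \mathcal{D}_k\, v = w^\top \mathrm{H}(q_d^{[\gamma-1]*})\, w > 0$, so $\mathcal{D}_k$ is positive-definite and in particular nonsingular. This establishes $A^\pi$, and with all hypotheses of Theorem~\ref{thm:Vrahatis} in place the conclusion---existence of a ball $\mathcal{S}$ centered at $q_d^{[\gamma-1]*}$ from which the block Jacobi iteration converges to $q_d^{[\gamma-1]*}$---follows at once. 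I do not anticipate any genuine obstacle here: the statement is indeed a direct consequence of Theorem~\ref{thm:Vrahatis}, and the only step beyond identifying the setup is the elementary fact that diagonal blocks inherit positive-definiteness, which secures property $A^\pi$.
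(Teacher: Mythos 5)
Your proposal is correct and follows essentially the same route as the paper: the paper also derives this proposition directly from Theorem~\ref{thm:Vrahatis} in its block form, with the only substantive check being that positive-definiteness of $\mathrm{H}(q_d^{[\gamma-1]*})$ forces each diagonal block $\mathcal{D}_k$ to be nonsingular, as required by property $A^\pi$. The paper verifies that last point via Sylvester's criterion and a permutation argument (in the proof of Proposition~\ref{prop:Hpd_implies_JN_conv}), whereas you restrict the quadratic form to vectors supported in the $k$-th slot; these are equivalent elementary arguments.
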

The hypothesis that $\mathrm{H}(q_d^{[\gamma-1]*})$ is positive-definite implies that $\mathcal{D}_k(q_d^{[\gamma-1]*})$ is regular for $k=1,\dots,N-1$ (see the proof of Proposition \ref{prop:Hpd_implies_JN_conv}), which is required by condition $A^\pi$.

\begin{example}
A basic situation in mechanics ($\gamma=1$) occurs when the discrete Lagrangian can be locally written as
\begin{equation*}
    L_d(q_0,q_1)= \frac{1}{2h}(q_1-q_0)^{\top} M (q_1-q_0) - hV(q_0,q_1)
\end{equation*}
  where $M$ is a constant, symmetric, positive-definite (p.d.)\ matrix, and $V$ is linear in $(q_0,q_1)$. We have $\mathcal{D}_k=2M/h$ and $\mathcal{C}_k=-M/h$ for all $k$, so $\mathrm{H}(q_d^{[0]})=\frac{1}{h} K \otimes M$ for all $q_d^{[0]}$, where $\otimes$ denotes the Kronecker product and $A$ is the $(N-1)\times(N-1)$ tridiagonal matrix
  \[
    K=
    \begin{pmatrix}
      2 & -1 &  & \\
      -1 & 2 & -1 & & \\
          & \ddots & \ddots & \ddots &\\
          &        & -1 & 2 & -1\\
          &        &         & -1 & 2
    \end{pmatrix}.
  \]
  The matrix $K$ is p.d.\ since if we write $x=(x_1,\dots,x_{N-1})$ then
  \[
    x^{\top}Kx=x_1^2+\sum_{k=1}^{N-2}(x_k-x_{k+1})^2+x_{N-1}^2>0\quad \text{for }x\neq 0.
 \]
Then $\mathrm{H}(q_d^{[0]})$ is p.d., being the Kronecker product of two p.d.\ matrices. We will extend this example to a more general case in Section~\ref{sec:Hessianpd}.
\end{example}

\subsection{The Jacobi--Newton composite method}\label{sec:composite}

When trying to solve each scalar equation \eqref{eq:fi_for_Jacobi} in the Jacobi method, one can apply for instance $m$ Newton--Raphson steps, thus obtaining the composite method known as $m$-step Jacobi--Newton.

Alternatively, a step of the Newton--Raphson method applied to the full nonlinear system $f(x)=0$, $x\in \mathbb{R}^n$, consists in solving
\begin{equation}\label{eq:fullNewton}
  Df(x^j)(x^{j+1}-x^j)=-f(x^j)
\end{equation}
for $x^{j+1}$, starting from an initial guess $x^0$. When $n$ is large, $Df(x^j)^{-1}$ is not readily available and this linear system must be solved using an iterative method; for instance, if $m$ steps of the (linear) Jacobi method are applied, one obtains what is called the $m$-step Newton--Jacobi method. The one-step Newton--Jacobi and one-step Jacobi--Newton methods actually coincide for the scalar (non-partitioned) case \cite[p.\ 221]{OrtegaRheinboldt}. Furthermore, they also coincide for the block partitioned versions of these methods, since the same arguments in the proof in \cite{OrtegaRheinboldt} hold in that case.
Write
\[
  Df(x)=\mathcal{D}(x)+\mathcal{C}(x),\quad J(x)=-\mathcal{D}(x)^{-1}\mathcal{C}(x),
\]
where $\mathcal{D}(x)$ is the block diagonal part of $Df(x)$. The block one-step Newton--Jacobi method converges locally to a solution $x^*$ of the nonlinear system if $f$ is smooth, $\mathcal{D}(x^*)$ is nonsingular and $\rho(J(x^*))<1$, where $\rho(X)$ denotes the spectral radius of $X$ \cite[p.\ 321, p.\ 332]{OrtegaRheinboldt}.

The alternative update rule \eqref{eq:NewtonRaphson} we propose is one Newton step for a block Jacobi--Newton method. In our case, $Df(x)$ is the Hessian matrix \eqref{eq:blocktridiagonal}, and the matrix $\mathcal{D}(x)$ is $\operatorname{diag}(\mathcal{D}_1,\dots,\mathcal{D}_{N-1})$. Taking into account the equivalence of the methods, the convergence result just mentioned becomes the following.
\begin{lemma}\label{prop:rhoJ}
  Let $q_d^{[\gamma-1]*}=(q^{[\gamma-1]*}_1,\dots,q^{[\gamma-1]*}_{N-1})$ be a solution of the DEL equations for fixed $q^{[\gamma-1]}_0$ and $q^{[\gamma-1]}_N$. If the blocks $\mathcal{D}_1,\dots,\mathcal{D}_N$ on the diagonal of $\mathrm{H}(q_d^{[\gamma-1]*})$ are regular and $\rho(J(q_d^{[\gamma-1]*}))<1$, then the 1-step block Jacobi--Newton method converges locally to $q_d^{[\gamma-1]*}$.
\end{lemma}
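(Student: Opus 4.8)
The plan is to recognize the update rule~\eqref{eq:NewtonRaphson} as the one-step block Jacobi--Newton iteration applied to $f=\nabla F$, where $F=\sum_{k=0}^{N-1}L_d(q^{[\gamma-1]}_k,q^{[\gamma-1]}_{k+1})$, and then to invoke the convergence result for the block one-step Newton--Jacobi method stated above, using the equivalence of the two methods. First I would observe that, by the DEL equations~\eqref{eq-deloc}, the solution $q_d^{[\gamma-1]*}$ is exactly a zero of $f$ and hence a fixed point of the iteration map $G(x)=x-\mathcal{D}(x)^{-1}f(x)$ defined by~\eqref{eq:NewtonRaphson}; here $\mathcal{D}(x)=\operatorname{diag}(\mathcal{D}_1,\dots,\mathcal{D}_{N-1})$ is the block-diagonal part of $Df(x)=\mathrm{H}(x)$ as in~\eqref{eq:blocktridiagonal}, and $\mathcal{C}(x)=Df(x)-\mathcal{D}(x)$ is its off-diagonal part.

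Next I would check the hypotheses of the quoted Newton--Jacobi convergence theorem. Smoothness of $f$ holds because $L_d$ is assumed to be at least $C^{2\gamma}$, so $F$ is $C^2$ and $f=\nabla F$ is $C^1$. The assumption that each diagonal block $\mathcal{D}_k$ is regular is precisely nonsingularity of $\mathcal{D}(x^*)$, while $\rho(J(x^*))<1$ is assumed outright, with $J(x)=-\mathcal{D}(x)^{-1}\mathcal{C}(x)$. Combining the already-noted equivalence of the block one-step Jacobi--Newton and Newton--Jacobi iterations with the Ortega--Rheinboldt convergence statement then yields local convergence of our scheme to $q_d^{[\gamma-1]*}$.

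To keep the argument self-contained, I would alternatively appeal directly to Ostrowski's theorem on the local convergence of a stationary iteration near a fixed point~\cite{OrtegaRheinboldt}. The one computation that matters is the Jacobian of $G$ at $x^*$: differentiating $G(x)=x-\mathcal{D}(x)^{-1}f(x)$ and using $f(x^*)=0$ to annihilate the term carrying the derivative of $\mathcal{D}(x)^{-1}$, one finds $G'(x^*)=I-\mathcal{D}(x^*)^{-1}Df(x^*)=-\mathcal{D}(x^*)^{-1}\mathcal{C}(x^*)=J(x^*)$. Since $\rho\!\left(G'(x^*)\right)=\rho(J(x^*))<1$, Ostrowski's theorem supplies a neighborhood of $x^*$ from which all iterates converge to $x^*$, with asymptotic linear rate $\rho(J(x^*))$.

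The proof carries no genuine obstacle, since the heavy lifting is done by the cited theorems; the only delicate points are (i) confirming that the equivalence of the one-step Jacobi--Newton and Newton--Jacobi methods, established in~\cite{OrtegaRheinboldt} for the scalar case, transfers verbatim to the block partitioning used here, and (ii) verifying that the iteration matrix $J$ in our hypothesis coincides with the one appearing in the cited convergence result. The Ostrowski route avoids (i) altogether and reduces the entire argument to the short Jacobian computation above, which I would regard as the cleanest presentation.
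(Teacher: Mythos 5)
Your first route is exactly the paper's argument: the paper derives this lemma by identifying the update rule \eqref{eq:NewtonRaphson} as the block one-step Jacobi--Newton iteration for $f=\nabla F$, invoking the equivalence with the block one-step Newton--Jacobi method, and then quoting the Ortega--Rheinboldt local convergence result under the hypotheses that $\mathcal{D}(x^*)$ is nonsingular and $\rho(J(x^*))<1$; your hypothesis-checking matches what the paper leaves implicit. Your alternative via Ostrowski's theorem is a genuine and worthwhile variant not present in the paper: writing the iteration as $G(x)=x-\mathcal{D}(x)^{-1}f(x)$ and computing $G'(x^*)=-\mathcal{D}(x^*)^{-1}\mathcal{C}(x^*)=J(x^*)$ (the term involving the derivative of $\mathcal{D}(x)^{-1}$ being killed by $f(x^*)=0$) reduces the whole lemma to the spectral-radius criterion for a stationary iteration, and it sidesteps the one slightly delicate point in the paper's route, namely that the scalar-case equivalence of the one-step Jacobi--Newton and Newton--Jacobi methods in \cite{OrtegaRheinboldt} must be re-checked for the block partitioning. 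The trade-off is that the paper's route keeps the statement aligned with the named methods and citations it uses elsewhere, while yours is shorter and self-contained; both are correct.
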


\begin{proposition}\label{prop:Hpd_implies_JN_conv}
  Let $q_d^{[\gamma-1]*}=(q^{[\gamma-1]*}_1,\dots,q^{[\gamma-1]*}_{N-1})$ be a solution of the DEL equations, for fixed $q^{[\gamma-1]}_0$ and $q^{[\gamma-1]}_N$. If the Hessian $\mathrm{H}(q_d^{[\gamma-1]*})$ is p.d., then the 1-step block Jacobi--Newton method converges locally to $q_d^{[\gamma-1]*}$.
\end{proposition}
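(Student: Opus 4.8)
The plan is to verify the two hypotheses of Lemma~\ref{prop:rhoJ} at the solution: that the diagonal blocks $\mathcal{D}_1,\dots,\mathcal{D}_{N-1}$ are regular, and that $\rho(J)<1$. Throughout I write $\mathrm{H},\mathcal{D},\mathcal{C},J$ for their values at $q_d^{[\gamma-1]*}$, so that $\mathcal{C}=\mathrm{H}-\mathcal{D}$ is the (symmetric) off-diagonal part of $\mathrm{H}$ and $J=-\mathcal{D}^{-1}\mathcal{C}$.

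First I would observe that each diagonal block is itself positive-definite: testing $\mathrm{H}\succ 0$ against vectors supported on the $k$-th block alone gives $\xi_k^{\top}\mathcal{D}_k\xi_k>0$ for all $\xi_k\ne 0$. In particular every $\mathcal{D}_k$ is regular, which settles the first hypothesis of Lemma~\ref{prop:rhoJ} (and also justifies the remark following Proposition~\ref{prop:Jacobi_convergence}). Since $\mathcal{D}=\operatorname{diag}(\mathcal{D}_1,\dots,\mathcal{D}_{N-1})$ is then positive-definite, I would introduce its symmetric positive-definite square root $\mathcal{D}^{1/2}$ and note that $J$ is similar, via $\mathcal{D}^{1/2}$, to the symmetric matrix $\tilde J:=-\mathcal{D}^{-1/2}\mathcal{C}\,\mathcal{D}^{-1/2}$. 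Hence the eigenvalues of $J$ are real and equal to those of $\tilde J$, so it suffices to prove $-I\prec\tilde J\prec I$.

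The two bounds reduce to positive-definiteness statements under congruence by $\mathcal{D}^{-1/2}$. The upper bound is immediate, since $I-\tilde J=\mathcal{D}^{-1/2}(\mathcal{D}+\mathcal{C})\mathcal{D}^{-1/2}=\mathcal{D}^{-1/2}\mathrm{H}\,\mathcal{D}^{-1/2}\succ 0$, because $\mathrm{H}\succ 0$ and congruence by the invertible $\mathcal{D}^{-1/2}$ preserves positive-definiteness. The lower bound requires $I+\tilde J=\mathcal{D}^{-1/2}(\mathcal{D}-\mathcal{C})\mathcal{D}^{-1/2}=\mathcal{D}^{-1/2}(2\mathcal{D}-\mathrm{H})\mathcal{D}^{-1/2}\succ 0$, so everything comes down to showing $2\mathcal{D}-\mathrm{H}\succ 0$.

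The key step, and the only one that uses the block-tridiagonal structure~\eqref{eq:blocktridiagonal}, is establishing $2\mathcal{D}-\mathrm{H}\succ 0$. Here I would exploit a checkerboard sign change: let $S=\operatorname{diag}\bigl((-1)^{1}I,\dots,(-1)^{N-1}I\bigr)$, a block-diagonal matrix with $S=S^{\top}$ and $S^{2}=I$, hence orthogonal. Conjugation by $S$ leaves each diagonal block $\mathcal{D}_k$ unchanged but multiplies each off-diagonal block $\mathcal{C}_k$ by $s_k s_{k+1}=(-1)^{2k+1}=-1$, so that $S\mathrm{H}S=2\mathcal{D}-\mathrm{H}$. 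Since $S$ is orthogonal, $2\mathcal{D}-\mathrm{H}$ is congruent to $\mathrm{H}$ and therefore positive-definite. Combining the two bounds gives $\rho(J)=\rho(\tilde J)<1$, and Lemma~\ref{prop:rhoJ} then yields local convergence of the one-step block Jacobi--Newton method. I expect the sign-flip congruence $S\mathrm{H}S=2\mathcal{D}-\mathrm{H}$ to be the main point; the rest is the standard Householder--John manipulation of positive-definite matrices.
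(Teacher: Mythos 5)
Your proof is correct, and it differs from the paper's in one substantive way: where the paper establishes $\rho(J(q_d^{[\gamma-1]*}))<1$ by citing Theorem 6.38 of Axelsson (positive-definite block tridiagonal matrix with positive-definite block diagonal implies convergent block Jacobi iteration), you prove that spectral bound from scratch. Your argument is the classical one behind that theorem: symmetrize $J$ to $\tilde J=-\mathcal{D}^{-1/2}\mathcal{C}\mathcal{D}^{-1/2}$, reduce $\rho(\tilde J)<1$ to the two congruence statements $\mathrm{H}\succ 0$ and $2\mathcal{D}-\mathrm{H}\succ 0$, and obtain the second from the first via the checkerboard similarity $S\mathrm{H}S=2\mathcal{D}-\mathrm{H}$, which is exactly where the block-tridiagonal structure enters (for a general sparsity pattern $2\mathcal{D}-\mathrm{H}\succ 0$ would be an independent hypothesis). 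Your treatment of the diagonal blocks is also slightly more economical: restricting the quadratic form of $\mathrm{H}$ to vectors supported on a single block gives $\mathcal{D}_k\succ 0$ immediately, whereas the paper goes through Sylvester's criterion on leading principal minors plus a permutation argument. The trade-off is the usual one: the paper's citation keeps the proof short, while your version is self-contained and makes visible precisely which structural feature of $\mathrm{H}$ (tridiagonality) is responsible for the convergence. Both routes funnel into the same Lemma~\ref{prop:rhoJ}, so the overall architecture coincides.
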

\begin{proof}
  Denote by $\mathcal{D}_1,\dots,\mathcal{D}_{N-1}$ the diagonal blocks that form $\mathcal{D}(q_d^{[\gamma-1]*})$, the block diagonal part of $\mathrm{H}(q_d^{[\gamma-1]*})$. Since $\mathrm{H}(q_d^{[\gamma-1]*})$ is symmetric and p.d., all its leading principal minors are positive, by Sylvester's criterion. This implies that $\mathcal{D}_1$ is also p.d.. Using a suitable permutation matrix $P$, one can move any $\mathcal{D}_k$ to the upper left position. Since $P^{\top} \mathrm{H}(q_d^{[\gamma-1]*})P$ is also symmetric and  p.d., every $\mathcal{D}_k$ is p.d..
Therefore $\mathcal{D}(q_d^{[\gamma-1]*})$ is p.d.. Theorem 6.38 in \cite{Axelsson_Iterative_Solution_Methods} shows that if $\mathrm{H}(q_d^{[\gamma-1]*})$ is p.d., block tridiagonal and $\mathcal{D}(q_d^{[\gamma-1]*})$ is p.d., then 
$\rho(J(q_d^{[\gamma-1]*}))<1$. The convergence result now follows from Lemma~\ref{prop:rhoJ}.
\end{proof}

\begin{remark}\label{rem:Hpd-implies-convergence}
If the Hessian is p.d., then both the block Jacobi method and the block one-step Jacobi--Newton method converge locally (Propositions \ref{prop:Jacobi_convergence} and \ref{prop:Hpd_implies_JN_conv}).
\end{remark}

\begin{remark}
Taking more than one Newton step in the Jacobi--Newton method does not generally enhance the rate of convergence \cite[p.\ 327]{OrtegaRheinboldt}.
\end{remark}

\subsection{Single-step sufficient conditions for the convergence of the methods}\label{sec:Hessianpd}

Even though the positive-definiteness of the Hessian for the full system of equations implies the local convergence of both the block Jacobi and the block (one-step) Jacobi--Newton methods, it can be a difficult condition to check in practice, as it is a matrix of order $\gamma(N-1)\dim Q$. In this section we obtain conditions ensuring that this Hessian is p.d. that rely solely on the Hessian matrix of $L_d\colon T^{(\gamma-1)}Q\times T^{(\gamma-1)}Q\to \mathbb{R}$, which is a matrix of order $2\gamma\dim Q$. We also show that if the continuous Lagrangian $L\colon T^{(\gamma)}Q\to \mathbb{R}$ has a positive-definite Hessian and $L_d$ is an approximation of order $2\gamma-1$, then the latter conditions
hold.

\subsection{The purely discrete case}
Let us rewrite equation \eqref{eq:blocktridiagonal} as
\begin{equation}
\mathrm{H}(q_d^{[\gamma-1]}) = \left(
\begin{array}{ccccc}
\mathcal{B}_0 + \mathcal{A}_1 &                 \mathcal{C}_1 &                        &                                       &                                      \\
           \mathcal{C}_1^\top & \mathcal{B}_1 + \mathcal{A}_2 &          \mathcal{C}_2 &                                       &                                      \\
                              &                        \ddots &                 \ddots &                                \ddots &                                      \\
                              &                               & \mathcal{C}_{N-3}^\top & \mathcal{B}_{N-3} + \mathcal{A}_{N-2} &                     \mathcal{C}_{N-2}\\
                              &                               &                        &                \mathcal{C}_{N-2}^\top & \mathcal{B}_{N-2} + \mathcal{A}_{N-1}
\end{array}
\right)
\label{eq:hessian_decomp_2}
\end{equation}
where
\begin{align*}
\mathcal{A}_k &= D_{1 1} L_d(q_k^{[\gamma-1]},q_{k+1}^{[\gamma-1]}), \quad k = 1, \dots, N - 1,\\
\mathcal{B}_k &= D_{2 2} L_d(q_k^{[\gamma-1]},q_{k+1}^{[\gamma-1]}), \quad k = 0, \dots, N - 2,\\
\mathcal{C}_k&=D_{12}L_d(q^{[\gamma-1]}_k,q^{[\gamma-1]}_{k+1}), \quad k=1,\dots,N-2,
\end{align*}
and $q^{[\gamma-1]}_0$, $q^{[\gamma-1]}_N$ are fixed. Observe that ${\mathcal D}_{k}=\mathcal{B}_{k-1} + \mathcal{A}_k$, $k=1, \ldots, N-1$. 
The following result is a straightforward verification.
\begin{lemma}
\label{lem:quadratic_form_decomp}
Assume $\mathcal{Q}\colon \mathbb{R}^n \times \mathbb{R}^n\to \mathbb{R}$ is a quadratic form whose associated matrix has the same block structure as the rhs of Eq.~\eqref{eq:hessian_decomp_2}. Let $v = (v_1, v_2, ..., v_{N-1}) \in \mathbb{R}^n$, with $v_i$, $i = 1, ..., N-1$, of the right dimension corresponding to the block structure decomposition of $\mathcal{Q}$. Then,
\begin{equation*}
\mathcal{Q}(v) = v_1^{\top} \mathcal{B}_0 v_1 + \sum_{k = 1}^{N-2} \mathcal{Q}_k((v_k,v_{k+1})) + v_{N-1}^{\top} \mathcal{A}_{N-1} v_{N-1},
\end{equation*}
with
\begin{equation*}
\mathcal{Q}_k((v_k,v_{k+1})) = \left(
\begin{array}{cc}
v_k^{\top} & v_{k+1}^{\top}
\end{array}\right)
\left(
\begin{array}{cc}
\mathcal{A}_k &       \mathcal{C}_k \\
\mathcal{C}_{k}^\top & \mathcal{B}_{k}
\end{array}
\right)
\left(
\begin{array}{c}
    v_k \\
v_{k+1}
\end{array}
\right)\,.
\end{equation*}
\end{lemma}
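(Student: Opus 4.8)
The plan is to prove this by direct expansion of the quadratic form $\mathcal{Q}(v) = v^\top M v$, where $M$ is the block-tridiagonal matrix on the right-hand side of Eq.~\eqref{eq:hessian_decomp_2}. Since $\mathcal{Q}$ is determined by its associated matrix, I would simply write out the bilinear sum $\mathcal{Q}(v) = \sum_{i,j} v_i^\top M_{ij} v_j$ and collect terms according to which diagonal or off-diagonal block they come from. The key observation is that the diagonal blocks have the special split form $\mathcal{D}_k = \mathcal{B}_{k-1} + \mathcal{A}_k$, so each diagonal contribution $v_k^\top \mathcal{D}_k v_k$ naturally breaks into a piece carrying $\mathcal{B}_{k-1}$ and a piece carrying $\mathcal{A}_k$. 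This bookkeeping is the entire content of the lemma.

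First I would expand the diagonal part: the $k$-th diagonal block contributes $v_k^\top(\mathcal{B}_{k-1}+\mathcal{A}_k)v_k$ for $k=1,\dots,N-1$. Then I would expand the off-diagonal part: each $\mathcal{C}_k$ appears symmetrically, giving $v_k^\top \mathcal{C}_k v_{k+1} + v_{k+1}^\top \mathcal{C}_k^\top v_k$ for $k=1,\dots,N-2$. The next step is the regrouping: I would reindex so that for each $k=1,\dots,N-2$ the terms $v_k^\top \mathcal{A}_k v_k$, $v_{k+1}^\top \mathcal{B}_k v_{k+1}$, and the two cross terms involving $\mathcal{C}_k$ are collected together. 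By inspection these four terms are exactly the expansion of the $2\times 2$ block form $\mathcal{Q}_k((v_k,v_{k+1}))$ stated in the lemma. What remains after this pairing are precisely the two boundary terms $v_1^\top \mathcal{B}_0 v_1$ (the $\mathcal{B}_{k-1}$ piece for $k=1$, which has no partner since there is no $\mathcal{C}_0$ pairing it leftward) and $v_{N-1}^\top \mathcal{A}_{N-1} v_{N-1}$ (the $\mathcal{A}_k$ piece for $k=N-1$, unpaired on the right).

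The only genuinely delicate point is the index accounting at the two ends, ensuring that each $\mathcal{A}_k$ and $\mathcal{B}_k$ is used exactly once and lands in the correct $\mathcal{Q}_k$ or boundary term. I would verify this by checking that $\mathcal{A}_k$ ($k=1,\dots,N-1$) and $\mathcal{B}_k$ ($k=0,\dots,N-2$) each appear in the target expression precisely once: $\mathcal{A}_k$ sits inside $\mathcal{Q}_k$ for $k\le N-2$ and in the boundary term for $k=N-1$, while $\mathcal{B}_k$ sits inside $\mathcal{Q}_k$ for $k\ge 1$ and in the boundary term for $k=0$. Since the off-diagonal $\mathcal{C}_k$ terms match up one-to-one with the $\mathcal{Q}_k$ blocks, the decomposition is exact with no leftover terms, confirming the identity. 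As the lemma statement itself notes, this is a straightforward verification, so I would keep the proof to a short explicit computation rather than belabor the indexing.
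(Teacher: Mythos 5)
Your proposal is correct, and it is exactly the ``straightforward verification'' the paper has in mind (the paper omits the proof entirely, stating only that the result is a direct check). The index bookkeeping you describe --- splitting each diagonal block $\mathcal{D}_k=\mathcal{B}_{k-1}+\mathcal{A}_k$, pairing each $\mathcal{C}_k$ with $\mathcal{A}_k$ and $\mathcal{B}_k$ inside $\mathcal{Q}_k$, and identifying the unpaired $\mathcal{B}_0$ and $\mathcal{A}_{N-1}$ boundary terms --- is precisely the intended argument.
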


\begin{remark}
If we define $v_0, v_N, \mathcal{A}_0, \mathcal{C}_0, \mathcal{C}_{N-1}, \mathcal{B}_{N-1}$ of the appropriate dimensions and set $v_0 = 0$ and $v_{N} = 0$, then with the same definitions we may write
\begin{equation*}
\mathcal{Q}(v) = \sum_{k = 0}^{N-1} \mathcal{Q}_k((v_k,v_{k+1}))\,.
\end{equation*}
\end{remark}
\begin{notation}
The expression $(v_k,v_{k+1})$ introduced above represents the concatenation or direct sum of $v_k$ and $v_{k+1}$.
\end{notation}

\begin{theorem}
\label{prp:discrete_positive_def}
Let $(q_0^{[\gamma-1]}, q_1^{[\gamma-1]}, \dots, q_N^{[\gamma-1]})$ be an arbitrary sequence in $T^{(\gamma-1)}Q$, and denote $q_d^{[\gamma-1]} = (q_1^{[\gamma-1]}, \dots, q_{N-1}^{[\gamma-1]})$. Consider a discrete $\gamma$-th order Lagrangian $L_d\colon T^{(\gamma-1)} Q \times T^{(\gamma-1)} Q \to \mathbb{R}$ of class $C^2$. If the Hessian of the discrete Lagrangian 
\begin{equation*}
\mathrm{H}^{L_d}_k (q_k^{[\gamma-1]},q_{k+1}^{[\gamma-1]}) =
\left(
\begin{array}{cc}
D_{1 1} L_d(q_k^{[\gamma-1]},q_{k+1}^{[\gamma-1]}) & D_{1 2} L_d(q_k^{[\gamma-1]},q_{k+1}^{[\gamma-1]}) \\
D_{2 1} L_d(q_k^{[\gamma-1]},q_{k+1}^{[\gamma-1]}) & D_{2 2} L_d(q_k^{[\gamma-1]},q_{k+1}^{[\gamma-1]})
\end{array}
\right) = \left(
\begin{array}{cc}
\mathcal{A}_k &       \mathcal{C}_k \\
\mathcal{C}_{k}^\top & \mathcal{B}_{k}
\end{array}
\right),
\end{equation*}
is positive semi-definite for all $k=0,\dots,N-1$, then $\mathrm{H}(q_d^{[\gamma-1]})$ is positive semi-definite. If, in addition, $\mathcal{A}_k$ is positive-definite for all $k$ or $\mathcal{B}_k$ is positive-definite for all $k$, then so is $\mathrm{H}(q_d^{[\gamma-1]})$, and the block Jacobi and block 1-step Jacobi--Newton methods converge locally.
\end{theorem}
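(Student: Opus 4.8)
The plan is to reduce the positive (semi-)definiteness of the large block tridiagonal matrix $\mathrm{H}(q_d^{[\gamma-1]})$ to that of the individual $2\gamma\dim Q$ blocks $\mathrm{H}^{L_d}_k$, by means of the quadratic-form decomposition of Lemma~\ref{lem:quadratic_form_decomp}. Extending the indices as in the Remark following that lemma and setting $v_0 = v_N = 0$, the quadratic form associated with $\mathrm{H}(q_d^{[\gamma-1]})$ becomes
\begin{equation*}
\mathcal{Q}(v) = \sum_{k=0}^{N-1} \mathcal{Q}_k((v_k, v_{k+1})),
\end{equation*}
where $\mathcal{Q}_k$ is the quadratic form of the block $\mathrm{H}^{L_d}_k$. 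Since each $\mathrm{H}^{L_d}_k$ is positive semi-definite, every summand satisfies $\mathcal{Q}_k((v_k,v_{k+1}))\ge 0$, and hence $\mathcal{Q}(v)\ge 0$ for all $v$. This already yields the positive semi-definiteness of $\mathrm{H}(q_d^{[\gamma-1]})$, which is the first assertion.

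For the definiteness statement I would argue by contradiction: suppose $\mathcal{Q}(v) = 0$ for some $v\ne 0$. As the sum above consists of nonnegative terms, each must vanish, so $\mathcal{Q}_k((v_k,v_{k+1}))=0$ for every $k$. Because $\mathrm{H}^{L_d}_k$ is positive semi-definite, a vanishing quadratic form forces its argument into the kernel, i.e.\ $\mathrm{H}^{L_d}_k \binom{v_k}{v_{k+1}} = 0$; reading off the two block rows gives
\begin{equation*}
\mathcal{A}_k v_k + \mathcal{C}_k v_{k+1} = 0, \qquad \mathcal{C}_k^\top v_k + \mathcal{B}_k v_{k+1} = 0.
\end{equation*}
The two alternatives then propagate the vanishing of components along the chain. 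If every $\mathcal{B}_k$ is positive-definite, the second relation yields $v_{k+1} = -\mathcal{B}_k^{-1}\mathcal{C}_k^\top v_k$, so $v_k = 0$ implies $v_{k+1}=0$; starting from the boundary value $v_0 = 0$ and running $k=0,1,\dots,N-2$ gives $v_1 = \dots = v_{N-1} = 0$, that is $v = 0$. Symmetrically, if every $\mathcal{A}_k$ is positive-definite, the first relation gives $v_k = -\mathcal{A}_k^{-1}\mathcal{C}_k v_{k+1}$, so $v_{k+1}=0$ implies $v_k=0$; starting from $v_N = 0$ and running $k=N-1,N-2,\dots,1$ downward yields $v=0$ again. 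In both cases the contradiction shows that $\mathrm{H}(q_d^{[\gamma-1]})$ is positive-definite.

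Finally, once $\mathrm{H}(q_d^{[\gamma-1]})$ is known to be positive-definite, the local convergence of both the block Jacobi and the block one-step Jacobi--Newton methods follows immediately from Remark~\ref{rem:Hpd-implies-convergence}, i.e.\ from Propositions~\ref{prop:Jacobi_convergence} and~\ref{prop:Hpd_implies_JN_conv}. The only genuinely delicate point is the kernel/propagation step: one must justify that $\mathcal{Q}_k((v_k,v_{k+1}))=0$ together with positive semi-definiteness really forces $(v_k,v_{k+1})$ into $\ker \mathrm{H}^{L_d}_k$ (for instance by factoring $\mathrm{H}^{L_d}_k = L^\top L$, so that $\mathcal{Q}_k = \lVert L(v_k,v_{k+1})\rVert^2$), and to keep careful track of the index ranges so that the definite block is available at every link of the chain. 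Everything else is routine linear algebra.
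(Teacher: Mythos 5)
Your proposal is correct and follows essentially the same route as the paper: both rest on the telescoping decomposition of the quadratic form from Lemma~\ref{lem:quadratic_form_decomp} with $v_0=v_N=0$, which immediately gives positive semi-definiteness. The only difference is in the final step — where the paper directly exhibits a strictly positive term $v_{k_0}^\top \mathcal{B}_{k_0-1} v_{k_0}$ (resp.\ $v_{k_0}^\top \mathcal{A}_{k_0} v_{k_0}$) at the first (resp.\ last) nonzero block, you argue by contradiction using the kernel characterization of positive semi-definite matrices and propagate the vanishing of the $v_k$ from the boundary; both are valid, and you correctly flag and justify the one genuinely delicate point (that $\mathcal{Q}_k((v_k,v_{k+1}))=0$ forces $(v_k,v_{k+1})\in\ker \mathrm{H}^{L_d}_k$).
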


\begin{proof} Write
\begin{equation*}
\mathcal{Q}_k((v_k,v_{k+1})) := \mathrm{H}^{L_d}_k(v_k,v_{k+1}), \quad k = 0,...,N-1,
\end{equation*}
where $v_k\in\mathbb{R}^{\gamma\dim Q}$, $k=0,\dots,N$.

Let $v=(v_1,\dots,v_{N-1})\in \mathbb{R}^{\gamma(N-1)\dim Q}$ be nonzero, and define $v_0=v_N=0\in \mathbb{R}^{\gamma\dim Q}$. Since by hypothesis $\mathrm{H}^{L_d}_k$ is positive semi-definite for all $k$, then
\begin{align*}
\mathrm{H}(q_d^{[\gamma-1]})(v,v) &= \sum_{k = 0}^{N-1} \mathcal{Q}_k((v_k,v_{k+1})) \\ &=\mathcal{Q}_0((0,v_1)) + \sum_{k = 1}^{N-2} \mathcal{Q}_k((v_k,v_{k+1})) + \mathcal{Q}_{N-1}((v_{N-1},0)) \geq 0\,,
\end{align*}
proving the first claim.

For the second claim suppose that, in addition, $\mathcal{B}_k$ is positive-definite for all $k=0,\dots N-1$, and let $k_0$ be the first index such that $v_{k_0}\neq 0$. Then the $k_0$-th term of the above sum is \[\mathcal{Q}_{k_0-1}((0,v_{k_0}))
=v_{k_0}^\top \mathcal{B}_{k_0-1}v_{k_0}>0.
\]
As all other terms are nonnegative, $\mathrm{H}(q_d^{[\gamma-1]})(v,v)>0$.
Similarly, if $\mathcal{A}_k$ is positive-definite for all $k$ and $k_0$ is the last index such that $v_{k_0}\neq 0$, we obtain a nonzero term
\[
\mathcal{Q}_{k_0}((v_{k_0},0))
=v_{k_0}^\top \mathcal{A}_{k_0}v_{k_0}>0.
\]
In either case, $\mathrm{H}(q_d^{[\gamma-1]})$ is positive-definite, and the convergence of the methods follows from Remark~\ref{rem:Hpd-implies-convergence}.
\end{proof}

\begin{example}
\label{exp:trapezoidal}
Consider the family of discrete second-order Lagrangians
$L^{\alpha}_d\colon T{\mathbb R}^n \times T {\mathbb R}^n \rightarrow {\mathbb R}$
with parameter $\alpha \in \mathbb{R}$ based on the trapezoidal rule, given by
\begin{equation*}
L_d^{\alpha}(q_0,v_0,q_1,v_1) = \frac{h}{2} \left[ L\left(q_0,v_0,a_0^{\alpha}\right) + L\left(q_1,v_1,a_1^{\alpha}\right)\right]
\end{equation*}
where
\begin{align*}
a_0^{\alpha} := \frac{\left[(1 - 3 \alpha) v_1 - (1 + 3 \alpha) v_0\right] \,h + 6 \alpha (q_1 - q_0)}{h^2}\,,\\
a_1^{\alpha} := \frac{\left[(1 + 3 \alpha) v_1 - (1 - 3 \alpha) v_0\right] \,h - 6 \alpha (q_1 - q_0)}{h^2}\,,
\end{align*}
with $L\colon T^{(2)}Q\rightarrow {\mathbb R}$ a continuous Lagrangian with positive-definite Hessian. 

It can be checked that this family of discrete Lagrangians is regular for $\alpha \neq 0$ and it is a discrete approximation of order $2$ for $L$ in the sense of Definition \ref{def:order-higher}. In this case we have that the Hessian  of the discrete Lagrangian $L_d^{\alpha}$ are
\begin{equation}\label{eq:trapezoidal}
\mathrm{H}_k^{L^{\alpha}_d}
=
\left(
\begin{array}{rrrr}
 \frac{4 (3 \alpha)^2}{h^3} &  \frac{2 (3 \alpha)^2}{h^2} & -\frac{4 (3 \alpha)^2}{h^3} &  \frac{2 (3 \alpha)^2}{h^2}\\[3pt]
 \frac{2 (3 \alpha)^2}{h^2} &   \frac{(3\alpha)^2 + 1}{h} & -\frac{2 (3 \alpha)^2}{h^2} &   \frac{(3\alpha)^2 - 1}{h}\\[3pt]
-\frac{4 (3 \alpha)^2}{h^3} & -\frac{2 (3 \alpha)^2}{h^2} &  \frac{4 (3 \alpha)^2}{h^3} & -\frac{2 (3 \alpha)^2}{h^2}\\[3pt]
 \frac{2 (3 \alpha)^2}{h^2} &   \frac{(3\alpha)^2 - 1}{h} & -\frac{2 (3 \alpha)^2}{h^2} &   \frac{(3\alpha)^2 + 1}{h}
\end{array}
\right)\otimes \mathcal{W}(q(\xi_k), \dot{q}(\xi_k), \ddot{q}(\xi_k))+ \mathrm{h.o.t.}
\end{equation}
where we $\xi_k \in [t_k, t_{k+1}]$ with $t_k = t_0 + k h$ and $q(t)$ denotes the continuous solution of the Euler--Lagrange for $L$. 
With the notation ``h.o.t."  we mean the matrix corresponding to higher order terms of each element of matrix $\mathrm{H}_k^{L^{\alpha}_d}$. 
Therefore, in order to check if we satisfy the hypotheses of Theorem~\ref{prp:discrete_positive_def}, for $h$ sufficiently small we only need to analyse if the first matrix of Equation~\eqref{eq:trapezoidal} fulfills the conditions of the theorem. However, observe that this matrix is positive semi-definite since 
every principal minor is $\geq 0$. Notice that contrary to the positive-definite case one needs to compute all principal minors and not only the leading ones. Moreover, since the first $2 \times 2$ block is positive-definite, the block Jacobi and block 1-step Jacobi--Newton methods converge for this discrete Lagrangian. 
\end{example}

\subsection{Discrete Lagrangians derived from a continuous one}
Now consider a regular Lagrangian of order $\gamma$.
From \cite{MR3562389} we know that for sufficiently small $h$ there exists a $C^{2 \gamma}$ unique solution curve of Hamilton's principle, defined by a point in a neighborhood $U_h$ of the diagonal of $T^{(\gamma-1)} Q \times T^{(\gamma-1)}Q$. We also know that $U_h$ degenerates into the diagonal for $h = 0$.

Therefore, away from $h = 0$, a $C^{2 \gamma}$ curve $q\colon [0,h] \to Q$ assumed to be a solution of Hamilton's 
principle may be parametrized by a point in $U_h$, i.e. a pair of sufficiently close points $q^{[\gamma-1]}_0$, $q^{[\gamma-1]}_1 \in T^{(\gamma-1)} Q$ that we may consider as boundary values of the lift of $q$ to
$T^{(\gamma-1)} Q$ (see \cite{Agarwal, Hartman,MR3562389} for more details). These results imply the existence of an exact discrete Lagrangian.

\begin{lemma}
\label{lem:IVP_BVP}
Let $q\colon [0, h] \to U \subset Q$, with $h > 0$, be a solution curve for a regular Lagrangian $L\colon T^{(\gamma)}Q\to \mathbb{R}$, and $(\varphi, U)$ a local chart. Assume that the lift of $q$ to $T^{(2 \gamma - 1)} Q$ in the corresponding adapted coordinate system takes the form $q^{[2 \gamma - 1]}(t) = (q^i(t),\dot{q}^i(t), \ddot{q}^{\,i}(t),..., q^{(2 \gamma - 1)\,i}(t))$, $i = 1,...,n=\dim Q$, and define $q^{(\alpha)\,i}_0 := q^{(\alpha)\,i}(0)$, $q^{(\alpha)\,i}_1 := q^{(\alpha)\,i}(h)$ for $\alpha = 0,..., 2 \gamma - 1$. Then, for $\alpha, \beta = 0,...,\gamma - 1$, $u, v = 0, 1$ and $i,j=1,...,n$,
\begin{equation*}
\frac{\partial q^{(\alpha)\,i}_u}{\partial q^{(\beta)\,j}_v} = \delta_{\alpha \beta} \delta_{u v}\delta_{ij}
\end{equation*}
and for $\alpha = \gamma, ..., 2 \gamma - 1$, $\beta = 0,...,\gamma - 1$,
\begin{equation*}
\frac{\partial q^{(\alpha)\,i}_u}{\partial q^{(\beta)\,j}_v} = \delta_{ij}\mathcal{O}( h^{-\alpha + \beta} )\,.\
\end{equation*}
\end{lemma}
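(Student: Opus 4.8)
The plan is to regard the entire collection of endpoint derivatives $q^{(\alpha)\,i}_u$ ($\alpha=0,\dots,2\gamma-1$, $u=0,1$) as functions of the boundary data $(q^{(\beta)\,j}_v)_{\beta\le\gamma-1,\,v\in\{0,1\}}$. By regularity of $L$ (Definition~\ref{def:hessiano}) and the existence and uniqueness results for the two-point boundary value problem of \eqref{gamma} (cf.\ \cite{MR3562389,Agarwal}), for $h$ small these $2\gamma\dim Q$ data parametrize the solution curve, so each $q^{(\alpha)\,i}_u$ is a well-defined differentiable function of them. For $\alpha=0,\dots,\gamma-1$ the quantity $q^{(\alpha)\,i}_u=q^{(\alpha)\,i}(uh)$ is literally one of the prescribed boundary data, so these are independent coordinates and the first identity $\partial q^{(\alpha)\,i}_u/\partial q^{(\beta)\,j}_v=\delta_{\alpha\beta}\delta_{uv}\delta_{ij}$ is immediate.

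For $\alpha\ge\gamma$ I would compute the derivatives by sensitivity analysis. Using regularity to solve \eqref{gamma} for $q^{(2\gamma)}$, I rewrite the Euler--Lagrange equations as a first-order system $\dot Z=G(Z)$ in $Z=(q,\dot q,\dots,q^{(2\gamma-1)})$. The variation $\psi=\partial q/\partial q^{(\beta)\,j}_v$ of the solution with respect to a single boundary datum then solves the linearized equation along $q$, a linear $2\gamma$-order ODE, with boundary conditions obtained by differentiating the prescribed data: $\psi^{(\alpha')\,i'}(0)=\delta_{v0}\delta_{\alpha'\beta}\delta_{i'j}$ and $\psi^{(\alpha')\,i'}(h)=\delta_{v1}\delta_{\alpha'\beta}\delta_{i'j}$ for $\alpha'\le\gamma-1$. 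The sought quantity is exactly $\partial q^{(\alpha)\,i}_u/\partial q^{(\beta)\,j}_v=\psi^{(\alpha)\,i}(uh)$.

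The powers of $h$ should emerge from rescaling time by $s=t/h$ together with an anisotropic rescaling of the variation components, $\tilde W(s)=S_h W(hs)$ with $W=(\psi,\dot\psi,\dots,\psi^{(2\gamma-1)})$ and $S_h=\operatorname{diag}(I,hI,\dots,h^{2\gamma-1}I)$. A direct computation gives
\[
\tilde W'(s)=M_h(s)\,\tilde W(s),\qquad M_h=M_0+\mathcal{O}(h),
\]
where $M_0$ is the nilpotent shift (so the limiting equation is $\tilde\psi^{(2\gamma)}=0$); the $\mathcal{O}(h)$ remainder is controlled because the coefficients of the linearization are evaluated along a fixed smooth reference solution and hence are bounded on $[0,h]$. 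In these variables the nonzero prescribed data of $\tilde W$ have size $\mathcal{O}(h^{\beta})$.

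The crux, which I expect to be the main obstacle, is to show that this linear two-point BVP has a solution operator bounded \emph{uniformly} as $h\to 0$. At $h=0$ the problem is $\tilde\psi^{(2\gamma)}=0$ with the value and first $\gamma-1$ derivatives prescribed at both $s=0$ and $s=1$, i.e.\ two-point Hermite interpolation by polynomials of degree $\le 2\gamma-1$, which is uniquely solvable; hence the limiting homogeneous BVP admits only the trivial solution and its solution operator is invertible. A perturbation argument then yields uniform invertibility for small $h$, so that $\|\tilde W\|_\infty=\mathcal{O}(h^{\beta})$. Undoing the rescaling gives
\[
\frac{\partial q^{(\alpha)\,i}_u}{\partial q^{(\beta)\,j}_v}=\psi^{(\alpha)\,i}(uh)=h^{-\alpha}\,[\tilde W(u)]_\alpha=\mathcal{O}(h^{-\alpha+\beta}),
\]
which is the second estimate; since the leading-order (decoupled, $M_0$) problem does not mix the component index, its solution is diagonal in $i,j$, accounting for the $\delta_{ij}$ structure.
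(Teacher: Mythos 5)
Your argument is correct, but it follows a genuinely different route from the paper. The paper never writes down the variational equation: it Taylor-expands the $C^{2\gamma}$ solution itself at both endpoints to order $2\gamma$, packages the first $\gamma$ expansions into the explicit approximate linear relations $x_1 = \mathbf{A}(h)x_0+\mathbf{B}(h)\mathbf{E}y_0+\mathcal{O}(z)$ and $x_0=\mathbf{A}(-h)x_1+\mathbf{B}(-h)\mathbf{E}y_1+\mathcal{O}(z)$, inverts $\mathbf{B}(\pm h)$ (whose regularity is Corollary~\ref{cor:B_matrix} in the Appendix), and reads the powers of $h$ off the entries of $\mathbf{B}^{-1}(\pm h)$. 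Your sensitivity-analysis version replaces that explicit inversion with the uniform invertibility of the rescaled linearized two-point BVP, whose $h\to 0$ limit is two-point Hermite interpolation; the unisolvence of that interpolation problem is playing exactly the role that the invertibility of $\mathbf{B}(h)$ plays in the paper. Each approach buys something. The paper's computation produces not just the orders but the explicit leading coefficients $\mathbf{E}\mathbf{B}^{-1}(h)$ and $-\mathbf{E}\mathbf{B}^{-1}(h)\mathbf{A}(h)$ of Eq.~\eqref{eq:y01_intermsof_x01}, which are reused verbatim later in the proof of Proposition~\ref{prp:regularity} (Eq.~\eqref{eq:hessian_expansion}); your argument as written only delivers the order estimates actually claimed in the lemma, so if you wanted to feed into that proposition you would have to extract the explicit Hermite interpolation operator, which would reproduce the same matrices. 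Conversely, your route is cleaner on a point the paper glosses over: the paper differentiates a relation whose $\mathcal{O}(z)$ Taylor remainders themselves depend on the boundary data, whereas setting up the variational equation handles the dependence on $(q_0^{[\gamma-1]},q_1^{[\gamma-1]})$ directly and establishes the nondegeneracy of the linearized BVP (hence the differentiability you need at the outset) as a byproduct. Both treatments handle the $\delta_{ij}$ factor the same way, namely as a statement about the leading-order term only, since the subleading corrections do mix the index $i$ in general.
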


\begin{proof}
The first statement is trivial, since it involves independent boundary conditions. The second statement can be shown as follows. By Taylor's theorem we have that for $\alpha = 0, ..., 2 \gamma - 1$,
\begin{equation*}
q^{(\alpha)\,i}(h) = \sum_{\beta = 0}^{2 \gamma - \alpha - 1} h^{\beta} \frac{q^{(\alpha + \beta) \, i}(0)}{\beta!} + \mathcal{O}(h^{2 \gamma - \alpha})\,,
\end{equation*}
and similarly
\begin{equation*}
q^{(\alpha)\,i}(0) = \sum_{\beta = 0}^{2 \gamma - \alpha - 1} (-h)^{\beta} \frac{q^{(\alpha + \beta) \, i}(h)}{\beta!} + \mathcal{O}(h^{2 \gamma - \alpha}).
\end{equation*}

Defining $x_u = (q^{(0)\,i}_u, ..., q^{(\gamma-1)\,i}_u)^{\top}$, $y_u = (q^{(\gamma)\,i}_u, ..., q^{(2\gamma-1)\,i}_u)^{\top}$, for $u = 0,1$, and $z = (h^{2 \gamma}, ..., h^{\gamma + 1})^{\top}$, we can rewrite the first $\gamma$ equations of each in matrix form as
\begin{align*}
x_1 &= \mathbf{A}(h) x_0 + \mathbf{B}(h) \mathbf{E} y_0 + \mathcal{O}(z)\\
x_0 &= \mathbf{A}(-h) x_1 + \mathbf{B}(-h) \mathbf{E} y_1 + \mathcal{O}(z)\,,
\end{align*}
where we have used the matrices defined in the Appendix. By Corollary \ref{cor:B_matrix}, we know that both $\mathbf{B}(h)$ and $\mathbf{B}(-h)$ are invertible. Taking into account that $\mathbf{E}^{-1} = \mathbf{E}$, we immediately find that
\begin{equation}
\begin{aligned}\label{eq:y01_intermsof_x01}
y_0 &= \mathbf{E} \mathbf{B}^{-1}(h) ( x_1 - \mathbf{A}(h) x_0) + \mathcal{O}(z h^{-\gamma})\,,\\
y_1 &= \mathbf{E} \mathbf{B}^{-1}(-h) (- \mathbf{A}(-h) x_1 + x_0) + \mathcal{O}(z h^{-\gamma})\,,
\end{aligned}
\end{equation}
and a simple computation leads us to the desired result.
\end{proof}

\begin{remark}
Following \cite{MR3562389} there exist local diffeomorphisms $\Psi_0, \Psi_1\colon T^{(\gamma-1)}Q \times T^{(\gamma-1)} Q \to T^{(2 \gamma - 1)} Q$, such that $\Psi_u(q^{[\gamma-1]}_0,q^{[\gamma-1]}_1) = q^{[2 \gamma - 1]}_u$ with $u = 0, 1$, defined on a tubular neighborhood of the diagonal that are naturally induced by the jet-like structure of these spaces. The lemma above computes their derivatives. The specific form of these diffeomorphisms is controlled by the vector fields that generate them, and in particular, the Euler--Lagrange equations in the case we are interested.
\end{remark}

\begin{proposition}
\label{prp:regularity}
Let $L\colon T^{(\gamma)} Q \to \mathbb{R}$ be a $C^{2\gamma}$ regular $\gamma$-th order Lagrangian with positive-definite Hessian matrix. Then there exists a deleted neighborhood of $0$, $U_0^* = U_0 \setminus \left\lbrace 0 \right\rbrace$, with $U_0 \subseteq \mathbb{R}$, for which the exact discrete Lagrangian, $L_d^{h,e} \equiv L_d^e\colon T^{(\gamma-1)} Q \times T^{(\gamma-1)} Q \to \mathbb{R}$, with $h \in U_0^*$, is regular. Moreover, if  $h\in U^*_{0,+} := U_0^* \cap \mathbb{R}_{+}$, $L_d^e$ satisfies all the hypotheses in Theorem~\ref{prp:discrete_positive_def}.
\end{proposition}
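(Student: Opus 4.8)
The plan is to identify the Hessian of the exact discrete Lagrangian with the second variation of the action evaluated on Jacobi fields. Write $x_u:=q_u^{[\gamma-1]}$ and let $q_{x_0,x_1}$ be the unique solution curve with these boundary data, so that $L_d^e(x_0,x_1)=\mathcal J[q_{x_0,x_1}]$. (Here positive-definiteness of the Hessian of $L$ must be read as positive-definiteness of the \emph{full} matrix $\mathbb L:=\big(\partial^2 L/\partial q^{(\alpha)}\partial q^{(\beta)}\big)_{\alpha,\beta=0}^{\gamma}$, not merely of $\mathcal W$; this is the condition that actually drives the argument, as the harmonic oscillator already shows that $\mathcal W>0$ alone makes $\mathrm H^{L_d^e}_k$ indefinite.) Differentiating $L_d^e$ along a family $(x_0+s\,\delta x_0,\,x_1+s\,\delta x_1)$ and integrating by parts, the bulk Euler--Lagrange terms vanish on the solution and only the endpoint pairings of the Ostrogradsky momenta up to order $\gamma-1$ survive; thus $D_1L_d^e$ and $D_2L_d^e$ are minus/plus these momenta at $t=0$ and $t=h$, the discrete Legendre transforms.

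Differentiating once more at $s=0$, I would use $\tfrac{d^2}{ds^2}\mathcal J[q_s]=\delta^2\mathcal J(\eta,\eta)+\delta\mathcal J(d^2q_s/ds^2)$, where $\eta:=dq_s/ds|_{0}$ is the Jacobi field (the solution of the linearized Euler--Lagrange equations) with $\eta^{[\gamma-1]}(0)=\delta x_0$ and $\eta^{[\gamma-1]}(h)=\delta x_1$. Since the boundary data are linear in $s$, the field $d^2q_s/ds^2$ vanishes to order $\gamma-1$ at both endpoints, so $\delta\mathcal J(d^2q_s/ds^2)=0$ (bulk term by criticality, boundary term by the vanishing jet). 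This yields the key identity
\[
\mathrm H^{L_d^e}_k\big((\delta x_0,\delta x_1),(\delta x_0,\delta x_1)\big)=\int_0^h \xi(t)^\top\,\mathbb L(t)\,\xi(t)\,dt,\qquad \xi:=(\eta,\dot\eta,\dots,\eta^{(\gamma)}),
\]
with $\mathbb L$ evaluated along the solution curve.

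For $h\in U_{0,+}^*$ the conclusion then follows at once. Shrinking $U_0$ so that the solution remains in a compact region where $\mathbb L$ is positive-definite with uniform lower bound $\lambda_{\min}>0$, the integrand is $\ge\lambda_{\min}|\xi(t)|^2\ge0$, so $\mathrm H^{L_d^e}_k\ge0$, with equality only if $\eta\equiv0$. The same pointwise positivity excludes conjugate points — a nonzero Jacobi field vanishing to order $\gamma-1$ at both ends would force the integral to vanish with $\xi\not\equiv0$ — so the boundary map $\eta\mapsto(\eta^{[\gamma-1]}(0),\eta^{[\gamma-1]}(h))$ is injective, hence bijective by dimension count. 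Therefore $\mathrm H^{L_d^e}_k$ is in fact strictly positive-definite; in particular it is positive semi-definite and its diagonal blocks $\mathcal A_k,\mathcal B_k$ are positive-definite, which is exactly what Theorem~\ref{prp:discrete_positive_def} requires. For $h<0$ the identity instead produces a negative-definite form, which is precisely why this second claim is confined to $U_{0,+}^*$ while regularity survives on the whole of $U_0^*$.

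Regularity, valid for either sign of $h$, asks that the mixed block $\mathcal C_k=D_{12}L_d^e$ be invertible, and this is not a consequence of definiteness alone. For this step I would invoke Lemma~\ref{lem:IVP_BVP}: since $D_1L_d^e=-p_0$ depends on $x_1$ only through the hidden endpoint derivatives $y_0=(q_0^{(\gamma)},\dots,q_0^{(2\gamma-1)})$, the chain rule gives $\mathcal C_k=-(\partial p_0/\partial y_0)(\partial y_0/\partial x_1)$. The factor $\partial y_0/\partial x_1=\mathbf E\,\mathbf B^{-1}(h)+\text{(lower order)}$ is invertible by Corollary~\ref{cor:B_matrix}, using \eqref{eq:y01_intermsof_x01}, while $\partial p_0/\partial y_0$ is, to leading order, block anti-triangular with $\pm\mathcal W$ on the anti-diagonal and hence invertible because $L$ is regular; thus $\mathcal C_k$ is invertible for all sufficiently small $h\neq0$. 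The main obstacle is the careful derivation of the second-variation identity in the higher-order setting: bookkeeping the endpoint pairings of the momenta up to order $\gamma-1$, justifying that the second-order-in-$s$ field contributes nothing, and confirming the absence of conjugate points so that the boundary-to-Jacobi-field correspondence is a bijection; the regularity part is comparatively routine once Lemma~\ref{lem:IVP_BVP} is available.
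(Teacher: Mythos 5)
Your argument is essentially correct as a piece of mathematics, but it proves a different proposition from the one stated, because you have replaced the paper's hypothesis. Definition~\ref{def:hessiano} declares the ``Hessian'' of a $\gamma$-th order Lagrangian to be only the top block $\mathcal{W}=\bigl(\partial^2L/\partial q^{(\gamma)\,i}\partial q^{(\gamma)\,j}\bigr)$, whereas your second-variation identity needs positive-definiteness of the full matrix of second derivatives of $L$ in all the variables $q^{(0)},\dots,q^{(\gamma)}$. That is a far stronger assumption: it fails for $L=\tfrac12|\dot q|^2-V(q)$ with $V$ convex, and for essentially every example in Section~\ref{sec-examples}, so a proof resting on it cannot play the role this proposition plays in the paper (in particular it could not feed into Theorem~\ref{thm:Ld-from-L-defpos}). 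The paper's own proof works only with $\mathcal{W}$: it writes the first derivatives of $L_d^e$ as Jacobi--Ostrogradski momenta, differentiates again, uses Lemma~\ref{lem:IVP_BVP} to extract the dominant power of $h$ in each entry, and identifies the leading part of $\mathrm{H}^{L_d^e}_k$ as an explicit combinatorial matrix built from $\mathbf{A},\mathbf{B},\mathbf{C},\mathbf{D}$ tensored with $\mathcal{W}$; regularity and the positive-definiteness of $\mathcal{A}_d,\mathcal{B}_d$ then follow from Lemma~\ref{lem:C_matrix} and Corollary~\ref{cor:B_matrix}, and semi-definiteness of the leading matrix from a Schur-complement factorization with $\mathcal{B}_{d,0}=\mathcal{C}_{d,0}^\top\mathcal{A}_{d,0}^{-1}\mathcal{C}_{d,0}$. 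Your regularity argument (chain rule through $y_0$, invertibility of $\mathbf{B}(h)$, anti-triangularity of $\partial p_0/\partial y_0$) is a correct compressed form of exactly that computation and does use only the stated hypothesis; that part stands.

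Your harmonic-oscillator remark, however, is not a quibble and deserves to be said plainly: it exposes a genuine gap, but in the paper's proof rather than in your choice of route. The paper establishes positive semi-definiteness only of the matrix of \emph{leading} terms, which has a kernel of dimension $\gamma\dim Q$; the higher-order corrections can push those zero eigenvalues negative, and for $L=\tfrac12\dot q^2-\tfrac12q^2$ they do, since $\mathrm{H}^{L_d^e}=\left(\begin{smallmatrix}\cot h&-\csc h\\-\csc h&\cot h\end{smallmatrix}\right)$ has determinant $-1$ for all $h\in(0,\pi)$. So the exact Hessian need not be positive semi-definite under the hypothesis $\mathcal{W}>0$, and the assertion that $L_d^e$ ``satisfies all the hypotheses in Theorem~\ref{prp:discrete_positive_def}'' is not actually delivered by the paper's argument either; your integral identity $\mathrm{H}^{L_d^e}_k\bigl((\delta x_0,\delta x_1),(\delta x_0,\delta x_1)\bigr)=\int_0^h\xi^\top\mathbb{L}\,\xi\,dt$ is the cleanest way to see what additional hypothesis would genuinely repair the statement (and it then yields strict, not merely semi-, definiteness). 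In summary: as a review of the literature your observation is valuable, but as a proof of Proposition~\ref{prp:regularity} as written your submission does not qualify, because the definiteness half is proved under a hypothesis the proposition does not grant you; only the regularity half matches the paper's claim and proof.
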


\begin{proof}
We know that, by its definition, the partial derivatives of the exact discrete Lagrangian coincide with the Jacobi--Ostrogradski momenta \cite{Generalized-classical}:
\begin{align*}
\frac{\partial L_d^e}{\partial q^{(\alpha)\, i}_u} &= \left. \sum_{k = 1}^n \left[ \sum_{\delta = 0}^{\gamma-\alpha-1} (-1)^{\delta} \frac{\mathrm{d}^{\delta}}{\mathrm{d} t^{\delta}} \left( \frac{\partial L}{\partial q^{(\alpha + \delta + 1)\, k}}(q(t), \dot{q}(t),...,q^{(\gamma)}(t)) \right)\right] \frac{\partial q^{(\alpha)\, k}}{\partial q^{(\alpha)\, i}_u}(t)\right\vert_{0}^{h}
\end{align*}
where $\alpha = 0,...,\gamma-1$, $i = 1, ..., n$ and $u = 0, 1$. Here and in the rest of the proof, the evaluations at $h$ and $0$ involve the entire expression.

Differentiating this expression with respect to $q^{(\beta)\, j}_v$,  $\beta = 0,...,\gamma-1$, and omitting arguments, we get
\begin{align*}
\frac{\partial^2 L_d^e}{\partial q^{(\alpha)\, i}_u \partial q^{(\beta)\, j}_v} &= \left. \sum_{k = 1}^n \left[ \sum_{\delta = 0}^{\gamma-\alpha-1} (-1)^{\delta} \frac{\partial}{\partial q^{(\beta)\, j}_v} \frac{\mathrm{d}^{\delta}}{\mathrm{d} t^{\delta}} \left( \frac{\partial L}{\partial q^{(\alpha + \delta + 1)\, k}}\right)\right] \frac{\partial q^{(\alpha)\, k}}{\partial q^{(\alpha)\, i}_u}\right\vert_{0}^{h}\\
&+ \left. \sum_{k = 1}^n \left[ \sum_{\delta = 0}^{\gamma-\alpha-1} (-1)^{\delta} \frac{\mathrm{d}^{\delta}}{\mathrm{d} t^{\delta}} \left( \frac{\partial L}{\partial q^{(\alpha + \delta + 1)\, k}} \right)\right] \frac{\partial^2 q^{(\alpha)\, k}}{\partial q^{(\alpha)\, i}_u \partial q^{(\beta)\, j}_v}\right\vert_{0}^{h}\,.
\end{align*}
All the terms in the second line of this equation vanish because the derivatives ${\partial q^{(\alpha)\, k}}/{\partial q^{(\alpha)\, i}_u}$ at $t=0$ are $\delta_{ki}\delta_{0u}$ and at $t=h$ they are $\delta_{ki}\delta_{1u}$ (see Lemma~\ref{lem:IVP_BVP}).

Separating the zeroth derivative term, commuting derivatives and expanding, we obtain
\begin{align*}
\frac{\partial^2 L_d^e}{\partial q^{(\alpha)\, i}_u \partial q^{(\beta)\, j}_v} &= \left. \sum_{k = 1}^n \left[ \sum_{\delta = 1}^{\gamma-\alpha-1} (-1)^{\delta} \frac{\mathrm{d}^{\delta}}{\mathrm{d} t^{\delta}} \left( \frac{\partial}{\partial q^{(\beta)\, j}_v} \frac{\partial L}{\partial q^{(\alpha + \delta + 1)\, k}}\right)\right] \frac{\partial q^{(\alpha)\, k}}{\partial q^{(\alpha)\, i}_u}\right\vert_{0}^{h}\\
&+ \left. \sum_{k = 1}^n \left[ \left( \frac{\partial}{\partial q^{(\beta)\, j}_v} \frac{\partial L}{\partial q^{(\alpha + 1)\, k}}\right)\right] \frac{\partial q^{(\alpha)\, k}}{\partial q^{(\alpha)\, i}_u}\right\vert_{0}^{h}\\
&= \left. \sum_{k = 1}^n \left[ \sum_{\delta = 1}^{\gamma-\alpha-1} (-1)^{\delta} \sum_{\ell = 1}^n \sum_{\epsilon = 0}^{\gamma} \left( \frac{\mathrm{d}^{\delta}}{\mathrm{d} t^{\delta}} \frac{\partial^2 L}{\partial q^{(\alpha + \delta + 1)\, k} \partial q^{(\epsilon)\, \ell}} \right) \frac{\partial q^{(\epsilon)\, \ell}}{\partial q^{(\beta)\, j}_v}  \right] \frac{\partial q^{(\alpha)\, k}}{\partial q^{(\alpha)\, i}_u}\right\vert_{0}^{h}\\
&+ \left. \sum_{k = 1}^n \left[ \sum_{\delta = 1}^{\gamma-\alpha-1} (-1)^{\delta} \sum_{\ell = 1}^n \sum_{\epsilon = 0}^{\gamma} \frac{\partial^2 L}{\partial q^{(\alpha + \delta + 1)\, k} \partial q^{(\epsilon)\, \ell}} \frac{\partial q^{(\epsilon + \delta)\, \ell}}{\partial q^{(\beta)\, j}_v} \right] \frac{\partial q^{(\alpha)\, k}}{\partial q^{(\alpha)\, i}_u}\right\vert_{0}^{h}\\
&+ \left. \sum_{k = 1}^n \left[ \sum_{\ell = 1}^n \sum_{\epsilon = 0}^{\gamma} \frac{\partial^2 L}{\partial q^{(\alpha + 1)\, k} \partial q^{(\epsilon)\, \ell}} \frac{\partial q^{(\epsilon)\, \ell}}{\partial q^{(\beta)\, j}_v} \right] \frac{\partial q^{(\alpha)\, k}}{\partial q^{(\alpha)\, i}_u}\right\vert_{0}^{h}
\end{align*}

First, we should notice that the total derivatives of the Lagrangian that appear in this formula are at most of order $\gamma - 1 < \gamma + 1$. Since the Lagrangian itself contains terms up to $q^{(\gamma)\,i}$, terms containing $q^{(2 \gamma - 1)\,i}$ will appear. However, since by the definition of $L_d^e$, $q^{(\alpha)}(t)$ is a solution of the Euler--Lagrange equations and, as shown in \cite{MR3562389}, these are $C^{2 \gamma}$ continuous, each possible term
\begin{equation*}
\left. \frac{\mathrm{d}^{\delta}}{\mathrm{d} t^{\delta}} \frac{\partial^2 L}{\partial q^{(\alpha + \delta + 1)\, k} \partial q^{(\epsilon)\, \ell}} \right\vert_{0}^h
\end{equation*}
will be a bounded function of the values $(q^{(0)} (0),...,q^{(2 \gamma - 1)}(0))$ and $(q^{(0)}(h),...,q^{(2 \gamma - 1)}(h))$.

Now we proceed to expand the expression for the Hessian in a Taylor series about an arbitrary point $\xi \in [0,h]$. From Lemma~\ref{lem:IVP_BVP} we deduce that $q^{(\alpha)\, i}(\xi) = (1 + \mathcal{O}(h)) q^{(\alpha)\, i}(0)$ and $q^{(\alpha)\, i}(\xi) = (1 + \mathcal{O}(h)) q^{(\alpha)\, i}(h)$. Moreover, from the same Lemma we know that the dominant terms are those multiplied by $\frac{\partial q^{(\zeta)\, \ell}}{\partial q^{(\beta)\, j}_v}$ with the highest $\zeta$ possible since these are $\mathcal{O}(h^{-\zeta+\beta})$. Clearly the lowest order terms occur for $\delta = \gamma - \alpha - 1$ and $\epsilon = \gamma$, and thus we may omit the rest:
\begin{align*}
\frac{\partial^2 L_d^e}{\partial q^{(\alpha)\, i}_u \partial q^{(\beta)\, j}_v} &= \left. \sum_{k = 1}^n \left[ (-1)^{\gamma-\alpha-1} \sum_{\ell = 1}^n \frac{\partial^2 L}{\partial q^{(\gamma)\, k} \partial q^{(\gamma)\, \ell}} \frac{\partial q^{(2 \gamma - \alpha - 1)\, \ell}}{\partial q^{(\beta)\, j}_v} + \mathcal{O}(h^{- 2 (\gamma - 1) + \alpha + \beta})\right] \frac{\partial q^{(\alpha)\, k}}{\partial q^{(\alpha)\, i}_u}\right\vert_{0}^{h}\\
&\phantom{=}+\text{h.o.t.}
\end{align*}

Noting the alternating signs and the fact that the derivatives appear in reversed order with respect to~\eqref{eq:y01_intermsof_x01}, the matrix $\mathbf{E}$ from that expression becomes $\mathbf{D}$ (as defined in the Appendix). Therefore we may expand the Hessian of $L_d^e$ as
\begin{align}
\left(
\begin{array}{cc}
\mathcal{A}_d & \mathcal{C}_d\\
\mathcal{C}_d^{\top} & \mathcal{B}_d
\end{array}
\right) &= \left(
\begin{array}{cc}
\mathbf{D} \mathbf{B}^{-1}(h) \mathbf{A}(h) & -\mathbf{D} \mathbf{B}^{-1}(h)\\
\mathbf{D} \mathbf{B}^{-1}(-h) & -\mathbf{D} \mathbf{B}^{-1}(-h)\mathbf{A}(-h)
\end{array}
\right) \otimes \mathcal{W}(\xi) + \text{h.o.t.}\label{eq:hessian_expansion}\\
&= \left(
\begin{array}{cc}
\mathbf{D} \mathbf{C}^{-1}(h) \mathbf{D} & -\mathbf{D} \mathbf{B}^{-1}(h)\\
\mathbf{D} \mathbf{B}^{-1}(-h) & -\mathbf{D} \mathbf{C}^{-1}(-h) \mathbf{D}
\end{array}
\right) \otimes \mathcal{W}(\xi) + \text{h.o.t.}\,,\nonumber
\end{align}
where $\mathcal{W}(\xi) := \mathcal{W}(q^{[\gamma]}(\xi))$. Thus, from Corollary \ref{cor:B_matrix}, we get that
\begin{equation*}
\det \mathcal{C}_d = \left( - h^{- \gamma^2} \prod_{\alpha = 0}^{\gamma - 1} \frac{(\gamma + \alpha)!}{\alpha!} \right)^n \left( \det \mathcal{W}(\xi) \right)^\gamma + \mathcal{O}(h^{- n \gamma^2 + 1})\,,
\end{equation*}
and therefore, the regularity of $L_d^e$ follows from the regularity of $L$.

Similarly,
\begin{align*}
\det \mathcal{A}_d &= \left( h^{- \gamma^2} \prod_{\alpha = 0}^{\gamma - 1} \frac{(\gamma + \alpha)!}{\alpha!} \right)^n \left( \det \mathcal{W}(\xi) \right)^\gamma + \mathcal{O}(h^{- n \gamma^2 + 1})\,,\\
\det \mathcal{B}_d &= \left( h^{- \gamma^2} \prod_{\alpha = 0}^{\gamma - 1} \frac{(\gamma + \alpha)!}{\alpha!} \right)^n \left( \det \mathcal{W}(\xi) \right)^\gamma + \mathcal{O}(h^{- n \gamma^2 + 1})\,.
\end{align*}

Using Lemma \ref{lem:C_matrix}.\ref{lem:C_matrix.itm:C_symm}, $-\mathbf{D} \mathbf{C}^{-1}(-h)\mathbf{D} = \mathbf{C}^{-1}(h)$ and applying Sylvester's law of inertia to $\mathbf{D} \mathbf{C}^{-1}(h) \mathbf{D}$, both matrices are definite. Thus, the same must be true for $\mathcal{A}_d$ and $\mathcal{B}_d$ for any $h \in U^*_0$, and in particular positive-definite for $U^*_{0,+}$.

Finally, let us denote by
\begin{equation*}
    \left(
    \begin{array}{cc}
        \mathcal{A}_{d, 0} & \mathcal{C}_{d, 0}\\
        \mathcal{C}_{d, 0}^{\top} & \mathcal{B}_{d, 0}
    \end{array}
\right)
\end{equation*}
the matrix of leading terms in Eq.~\eqref{eq:hessian_expansion}, and consider the following factorization (see \cite{MR2061575}):
\begin{multline*}
\left(
\begin{array}{cc}
\mathcal{A}_{d, 0} & \mathcal{C}_{d, 0}\\
\mathcal{C}_{d, 0}^{\top} & \mathcal{B}_{d, 0}
\end{array}
\right) =\\ 
\left(
\begin{array}{cc}
I & 0\\
\mathcal{C}_{d, 0}^{\top} (\mathcal{A}_{d, 0})^{-1} & I
\end{array}
\right) \left(
\begin{array}{cc}
\mathcal{A}_{d, 0} & 0\\
0 & \mathcal{B}_{d, 0} - \mathcal{C}_{d, 0}^{\top} (\mathcal{A}_{d, 0})^{-1} \mathcal{C}_{d, 0}
\end{array}
\right) \left(
\begin{array}{cc}
I & (\mathcal{A}_{d, 0})^{-1} \mathcal{C}_{d, 0}\\
0 & I
\end{array}
\right)
\end{multline*}
Let us show that
\begin{equation*}
\mathcal{B}_{d, 0} = \mathcal{C}_{d, 0}^{\top} (\mathcal{A}_{d, 0})^{-1} \mathcal{C}_{d, 0}\,.
\end{equation*}
Indeed, from their definitions and Lemma \ref{cor:AB_matrix},
\begin{align*}
\mathcal{C}_{d, 0}^{\top} (\mathcal{A}_{d, 0})^{-1} \mathcal{C}_{d, 0} &= - \mathbf{D} \mathbf{B}^{-1}(-h) \mathbf{A}^{-1}(h) \mathbf{B}(h) \mathbf{D} \mathbf{D} \mathbf{B}^{-1}(h) \otimes \mathcal{W}(\xi) \mathcal{W}^{-1}(\xi) \mathcal{W}(\xi)\\
&= - \mathbf{D} \mathbf{B}^{-1}(-h) \mathbf{A}^{-1}(h) \otimes \mathcal{W}(\xi)=- \mathbf{D} \mathbf{B}^{-1}(-h) \mathbf{A}(-h) \otimes \mathcal{W}(\xi)=\mathcal{B}_{d, 0} \,.
\end{align*}
Since the block diagonal matrix obtained through this factorization and the original are symmetric and congruent, Sylvester's law of inertia tells us that their signature and number of zeros in the spectrum will coincide. That is, this matrix has $\gamma \dim Q$ positive eigenvalues and a zero eigenvalue with multiplicity  $\gamma \dim Q$ and, therefore, they are positive semidefinite.
\end{proof}

\begin{remark}
This result generalizes Theorem 4.2 in \cite{MR3562389}.
\end{remark}

Now, we will use these results to obtain the convergence of the parallel algorithm for a  general  discrete Lagrangian $L_d\colon T^{(\gamma-1)}Q\times T^{(\gamma-1)}Q\rightarrow {\mathbb R}$ comparing it with the exact discrete Lagrangian. However, special care must be taken with the approximation of the successive derivatives between a discrete Lagrangian and the exact Lagrangian as summarized in the following remark.
\begin{remark}
In general, as it is shown in \cite{patrick},  the fact that a discrete $\gamma$-th order Lagrangian $L_d$ be consistent to order $r$ does not imply that its discrete fibre derivative $\mathbb{F}L_d^{\pm}\colon T^{(\gamma-1)} Q \times T^{(\gamma-1)} Q \to T^*(T^{(\gamma - 1)}Q)$ be of order $r$. Here, we denote by 
$\mathbb{F}L_d^{\pm}$ the discrete Legendre transformations defined by \cite{MarsdenWest_ActaNum}:
\begin{align*}
\mathbb{F}L_d^{-}(q_0^{[\gamma-1]},q_1^{[\gamma-1]})&=
( q_0^{[\gamma-1]}, -D_1 L_d (q_0^{[\gamma-1]},q_1^{[\gamma-1]}))\\
\mathbb{F}L_d^{+}(q_1^{[\gamma-1]},q_1^{[\gamma-1]})&=
( q_0^{[\gamma-1]}, D_2 L_d (q_0^{[\gamma-1]},q_1^{[\gamma-1]}))
\end{align*}
That is, if $F_L^{h}\colon T^{(2 \gamma - 1)}Q \to T^{(2 \gamma -1)}Q$ denotes the Lagrangian flow derived from the Euler--Lagrange equations, $\pi^{\alpha}_{\beta}\colon T^{(\alpha)} Q \to T^{(\beta)}Q$, with $\alpha > \beta$ denotes the natural projection $( q,\dot{q},...,q^{(\beta)}, ..., q^{(\alpha)}) \mapsto ( q,\dot{q},...,q^{(\beta)} )$, and $q_{0}^{[2 \gamma -1]} := ( q_0,\dot{q}_0,...,q_0^{(2 \gamma - 1)} )$,
\begin{align*}
\mathbb{F}L_d^{-}(q_{0}^{[\gamma -1]},( \pi^{2 \gamma - 1}_{\gamma - 1} \circ F_L^h)(q_{0}^{[2 \gamma -1]})) &\neq \mathbb{F}L(q_{0}^{[2 \gamma -1]}) + \mathcal{O}(h^{r+1})\\
\mathbb{F}L_d^{+}(q_{0}^{[\gamma -1]},( \pi^{2 \gamma - 1}_{\gamma - 1} \circ F_L^h)(q_{0}^{[2 \gamma -1]})) &\neq ( \mathbb{F}L \circ F_L^h)(q_{0}^{[2 \gamma -1]}) + \mathcal{O}(h^{r+1})
\end{align*}
where $\mathbb{F}L\colon  T^{(2k-1)}Q\rightarrow T^*(T^{(\gamma-1)}Q)$ is the Legendre transformation of $L$ (see \cite{Generalized-classical}). 
\end{remark}

\addtocounter{example}{-1}
\begin{example}[continued]
One can check that the discrete fibre derivatives at $h = 0$ converge to
\begin{align*}
\mathbb{F}L_d^{\alpha,\, \pm}(q_{0}^{[1]},q_{0}^{[1]}) = \left.\left(q^i, \dot{q}^i, - 3 \alpha \left[ \frac{\mathrm{d}}{\mathrm{d} t}\left( \frac{\partial L}{\partial \ddot{q}^i} \right) + (\alpha - 1) \frac{\partial^2 L}{\partial \ddot{q}^i \partial \ddot{q}^j} q^{(3) \, j} \right], \frac{\partial L}{\partial \ddot{q}^i} \right)\right\vert_{q_0^{[3]}}
\end{align*}
which do not coincide with
\begin{equation*}
\mathbb{F}L(q_0^{[3]}) = \left.\left(q^i, \dot{q}^i, \frac{\partial L}{\partial \dot{q}^i} - \frac{\mathrm{d}}{\mathrm{d} t}\left( \frac{\partial L}{\partial \ddot{q}^i} \right), \frac{\partial L}{\partial \ddot{q}^i} \right)\right\vert_{q_0^{[3]}}\,.
\end{equation*}
for any $\alpha$, despite $L_d^{\alpha}$ being order $2$.
\end{example}

With this in mind we need the following proposition.
\begin{proposition}\label{prop-order}
If $L_d\colon T^{(\gamma-1)}Q\times T^{(\gamma-1)}Q\rightarrow {\mathbb R}$
is a discrete Lagrangian that is consistent to order $r$ with a Lagrangian $L\colon T^{(\gamma)}Q\rightarrow {\mathbb R}$ then 
\begin{align}\label{hessian-order}
\frac{\partial^2 L_d}{\partial q^{(\alpha)\, i}_u \partial q^{(\beta)\, j}_v} 
=\frac{\partial^2 L^e_d}{\partial q^{(\alpha)\, i}_u \partial q^{(\beta)\, j}_v}+{\mathcal O}(h^{r+1-2\gamma}), \quad 1\leq \alpha,\beta\leq \gamma-1
\end{align}
\end{proposition}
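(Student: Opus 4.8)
The plan is to study the difference $E := L_d - L_d^e$, regarded as a function of the boundary data $(q_0^{[\gamma-1]}, q_1^{[\gamma-1]})$ on the neighborhood $U_h$ of the diagonal where both Lagrangians are defined. By Definition~\ref{def:order-higher}, consistency to order $r$ means precisely that $|E| \le C_U h^{r+1}$, uniformly for initial data in a \emph{fixed} compact set $U\subset T^{(2\gamma-1)}Q$ and for $h\le h_U$. The difficulty is that a uniform bound on a function does not by itself control its derivatives; the whole point of the proof is to upgrade this sup bound on $E$ into a bound on its second derivatives with respect to the boundary coordinates, at the cost of only the factor $h^{-2\gamma}$.

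First I would reparametrize. Since a solution of the $2\gamma$-order Euler--Lagrange equations is determined by its initial jet $\chi := q^{[2\gamma-1]}(0)$, I write $\hat E(\chi, h)$ for $E$ evaluated on the boundary data of the solution issuing from $\chi$. Because the Lagrangian flow, $L$, $L_d$ and $L_d^e$ are all smooth, and $L_d^e$ extends smoothly down to $h=0$ (where $U_h$ collapses to the diagonal), $\hat E$ is jointly smooth in $(\chi, h)$ on $U\times[0,h_U]$. The uniform estimate $|\hat E(\chi,h)| \le C_U h^{r+1}$ then forces $\partial_h^{\,j}\hat E(\chi,0)=0$ for $j=0,\dots,r$ and every $\chi$, so Hadamard's lemma (applied $r+1$ times in $h$, with $\chi$ as a parameter) yields a factorization $\hat E(\chi, h) = h^{r+1}\, g(\chi, h)$ with $g$ smooth. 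Consequently \emph{every} derivative of $\hat E$ with respect to the jet coordinates $\chi$ is again $\mathcal{O}(h^{r+1})$, uniformly on compact subsets. This step is the crux of the argument and the place where the uniformity of the consistency bound over a fixed compact set is indispensable.

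It then remains to convert $\chi$-derivatives into derivatives with respect to the boundary coordinates. The low jet coordinates $q^{(\zeta)}_0$, $q^{(\zeta)}_1$ with $\zeta\le\gamma-1$ are themselves part of the boundary data, whereas the high ones $q^{(\zeta)}_0$ with $\gamma\le\zeta\le2\gamma-1$ are determined by the boundary data, with first derivatives of size $\mathcal{O}(h^{-\zeta+\beta})$ by Lemma~\ref{lem:IVP_BVP} and second derivatives of size $\mathcal{O}(h^{-\zeta+\alpha+\beta})$ (obtained from the same rescaling $q^{(\zeta)}\mapsto h^{\zeta}q^{(\zeta)}$ that normalizes the estimates of that lemma). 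Applying the chain rule twice to $E = \hat E\circ(\text{boundary data}\mapsto\chi)$ and using $\partial_\chi\hat E,\ \partial^2_\chi\hat E = \mathcal{O}(h^{r+1})$, the dominant contribution is the cross term in the two highest jet coordinates, $\zeta=\eta=2\gamma-1$:
\begin{equation*}
\frac{\partial^2\hat E}{\partial q^{(2\gamma-1)}_0\,\partial q^{(2\gamma-1)}_0}\;\frac{\partial q^{(2\gamma-1)}_0}{\partial q^{(\beta)}_v}\;\frac{\partial q^{(2\gamma-1)}_0}{\partial q^{(\alpha)}_u} = \mathcal{O}\!\left(h^{\,r+1-2(2\gamma-1)+\alpha+\beta}\right).
\end{equation*}
For $\alpha,\beta\le\gamma-1$ this exponent is smallest (most negative) at $\alpha=\beta=\gamma-1$, where it equals $r+1-2\gamma$; every other term (those involving a low jet coordinate, or the terms carrying a second derivative of the coordinate change) carries a strictly higher power of $h$ and is therefore dominated. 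This yields \eqref{hessian-order} for all $1\le\alpha,\beta\le\gamma-1$.

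The main obstacle is exactly the passage carried out in the second paragraph: turning the scalar consistency estimate into estimates on derivatives. Once the factorization $\hat E=h^{r+1}g$ is in hand, the remainder is routine bookkeeping of the chain rule together with the scaling exponents of Lemma~\ref{lem:IVP_BVP}. The two technical points to verify carefully are the joint smoothness of $\hat E$ up to $h=0$ and the second-order analogue of the estimates in Lemma~\ref{lem:IVP_BVP}; both follow from the smooth dependence of the exact discrete Lagrangian on the boundary data and from the rescaling argument already used to prove that lemma.
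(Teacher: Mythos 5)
Your overall strategy --- factor the error $E=L_d-L_d^e$ through a jet parametrization, upgrade the sup bound of Definition~\ref{def:order-higher} to derivative bounds via Hadamard's lemma, and then push the result back through the boundary-value chart using Lemma~\ref{lem:IVP_BVP} --- is close in spirit to the paper's proof, and the Hadamard step is a reasonable way to make precise what the paper simply asserts as ``the most general form of a discrete Lagrangian consistent to order $r$''. However, the final bookkeeping contains a genuine error that the argument, as set up, cannot repair. The exponent $r+1-2(2\gamma-1)+\alpha+\beta$ you obtain from the cross term in $q^{(2\gamma-1)}_0$ is an \emph{increasing} function of $\alpha+\beta$, so it is most negative at the \emph{smallest} admissible values of $\alpha,\beta$, not at $\alpha=\beta=\gamma-1$ as you claim. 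Since the proposition asserts the bound $\mathcal{O}(h^{r+1-2\gamma})$ entrywise, and your estimate for the $(\alpha,\beta)$ entry is $\mathcal{O}(h^{r+1-2(2\gamma-1)+\alpha+\beta})$, you recover the stated bound only when $\alpha+\beta\geq 2\gamma-2$, i.e.\ for the single block $\alpha=\beta=\gamma-1$; for every other entry, and for any $\gamma\geq 2$, your bound is strictly weaker than the one claimed (e.g.\ $\mathcal{O}(h^{r+3-4\gamma})$ at $\alpha=\beta=0$).

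The root of the problem is that your Hadamard factorization $\hat E=h^{r+1}\,g(\chi,h)$ lets $g$ depend on the full initial jet $\chi\in T^{(2\gamma-1)}Q$, whose top coordinate has derivatives of size $\mathcal{O}(h^{-(2\gamma-1)+\beta})$ with respect to the boundary data. The paper's proof instead posits that the leading error term factors through $T^{(\gamma)}Q$, writing $L_d-L_d^e=h^{r+1}\,g\bigl(q^{[\gamma]}_0\bigr)+\mathcal{O}(h^{r+1})$ with $g$ a function of the $\gamma$-jet only; then the worst chain-rule factor per differentiation is $\partial q^{(\gamma)}_0/\partial q^{(\beta)\,j}_v=\mathcal{O}(h^{-\gamma+\beta})=\mathcal{O}(h^{-\gamma})$, so two differentiations cost at most $h^{-2\gamma}$, which is exactly the stated loss. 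That structural restriction on $g$ is an additional hypothesis the paper invokes without proof; your argument does not supply it, and without it the conclusion $\mathcal{O}(h^{r+1-2\gamma})$ does not follow from your factorization. To salvage your approach you would need to show that the $h^{r+1}$-coefficient of the error depends only on the $\gamma$-jet, or that its dependence on $q^{(\gamma+1)}_0,\dots,q^{(2\gamma-1)}_0$ is suppressed by compensating powers of $h$.
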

\begin{proof}
The most general form of a  discrete Lagrangian consistent to order $r$ is
\begin{equation*}
L_d(q_0^{[\gamma - 1]},q_1^{[\gamma - 1]}) = L_d^e(q_0^{[\gamma - 1]},q_1^{[\gamma - 1]}) + h^{r + 1} g\left(q^{[2\gamma-1]}(q_0^{[\gamma - 1]},q_1^{[\gamma - 1]},h)\right) + \mathcal{O}(h^{r + 1})
\end{equation*}
where $t\rightarrow q^{[2\gamma-1]}(q_0^{[\gamma - 1]},q_1^{[\gamma - 1]},t)$ denotes the curve in $T^{(2\gamma-1)}Q$ solution of the Euler--Lagrange equations for $L$ with boundary conditions $q_0^{[\gamma - 1]},q_1^{[\gamma - 1]}$, and 
$g\colon T^{(\gamma)}Q \to \mathbb{R}$ is an arbitrary $C^{2}$ function.
Using Lemma  \ref{lem:IVP_BVP}, $q_0^{(\gamma)} \sim h^{-{\gamma}} f(q_0^{[\gamma - 1]},q_1^{[\gamma - 1]},h)$ and thus, for $u = 0, 1$,
\begin{align*}
D_u L_d(q_0^{[\gamma - 1]},q_1^{[\gamma - 1]}) &= D_u L_d^e(q_0^{[\gamma - 1]},q_1^{[\gamma - 1]}) + h^{r + 1 - \gamma} g'\left(q_0^{[\gamma]}\right) f(q_0^{[\gamma - 1]},q_1^{[\gamma - 1]},h) + \mathcal{O}(h^{r + 1})\\
&= D_u L_d^e(q_0^{[\gamma - 1]},q_1^{[\gamma - 1]}) + \mathcal{O}(h^{r + 1 - \gamma})\,.
\end{align*}
Therefore, the fibre derivative may only be guaranteed to be of order $r-\gamma$. 
Now taking an additional derivative, the order could decay by an additional $-\gamma$ and, as a consequence, we deduce Equation~\eqref{hessian-order}.
\end{proof} 

\begin{theorem}\label{thm:Ld-from-L-defpos}
If $L_d\colon T^{(\gamma-1)}Q\times T^{(\gamma-1)}Q \to \mathbb{R}$ is a discrete Lagrangian that is consistent to order $2\gamma - 1$ with a Lagrangian $L\colon T^{(\gamma)}Q \to \mathbb{R}$ with a positive-definite Hessian matrix $\mathcal{W}$, then $L_d$ satisfies all the hypotheses in Theorem~\ref{prp:discrete_positive_def}, in a deleted neighborhood of $h = 0$, and hence the block Jacobi and 1-step block Jacobi--Newton methods converge locally.
\end{theorem}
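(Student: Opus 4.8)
The plan is to transfer to $L_d$ the definiteness properties that Proposition~\ref{prp:regularity} establishes for the exact discrete Lagrangian $L_d^e$, using the consistency estimate of Proposition~\ref{prop-order} to control the difference of their Hessians. From Proposition~\ref{prp:regularity} I would use that, for $h\in U^*_{0,+}$, the diagonal blocks $\mathcal{A}_d,\mathcal{B}_d$ of $\mathrm{H}^{L_d^e}_k$ are positive-definite while $\mathrm{H}^{L_d^e}_k$ itself is positive semi-definite, and that the expansion \eqref{eq:hessian_expansion} displays its leading part as $\widetilde{M}_0\otimes\mathcal{W}(\xi)$ with $\widetilde{M}_0=\bigl(\begin{smallmatrix}\mathcal{A}_{d,0}&\mathcal{C}_{d,0}\\ \mathcal{C}_{d,0}^{\top}&\mathcal{B}_{d,0}\end{smallmatrix}\bigr)$, whose $(\alpha,\beta)$ block scales like $h^{\alpha+\beta-2\gamma+1}$. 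From Proposition~\ref{prop-order} with $r=2\gamma-1$ I would use that each second derivative of $L_d$ differs from that of $L_d^e$ by a \emph{relative} error of order $h^{2\gamma-1}$: the crude statement there is $\mathcal{O}(h^{r+1-2\gamma})=\mathcal{O}(1)$, but keeping the $\alpha,\beta$-dependence of the differentiated curve factors from Lemma~\ref{lem:IVP_BVP} sharpens the absolute bound on the $(\alpha,\beta)$ entry to $\mathcal{O}(h^{\alpha+\beta})$, i.e. exactly a factor $h^{2\gamma-1}$ below the corresponding exact entry.

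Because the entries of $\mathrm{H}^{L_d^e}_k$ have different orders in $h$, I would first rescale. Let $\Lambda=\Lambda(h)$ be the diagonal congruence whose block on the derivative index $\alpha$ is $h^{\gamma-\frac12-\alpha}I$; this is chosen precisely so that $\Lambda\,\mathrm{H}^{L_d^e}_k\,\Lambda\to\widetilde{M}_0\otimes\mathcal{W}(\xi)$ as $h\to0^{+}$, a fixed positive semi-definite matrix with positive-definite diagonal blocks. By the relative error estimate, $\Lambda\,\mathrm{H}^{L_d}_k\,\Lambda=\Lambda\,\mathrm{H}^{L_d^e}_k\,\Lambda+\mathcal{O}(h^{2\gamma-1})$, so the scaled discrete Hessian converges to the same limit. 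Since positive-definiteness is an open condition and congruence preserves inertia (Sylvester's law), the top-left block of $\Lambda\,\mathrm{H}^{L_d}_k\,\Lambda$ is positive-definite for all small $h>0$; undoing the congruence shows that $\mathcal{A}_k$ is positive-definite for every $k$, and the symmetric argument gives $\mathcal{B}_k$ positive-definite. This already secures the ``$\mathcal{A}_k$ p.d.\ for all $k$ or $\mathcal{B}_k$ p.d.\ for all $k$'' clause of Theorem~\ref{prp:discrete_positive_def}.

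The hard part is the positive \emph{semi}-definiteness of each full block $\mathrm{H}^{L_d}_k$, and I expect this to be the main obstacle. The difficulty is intrinsic: the limit $\widetilde{M}_0\otimes\mathcal{W}$ is only semi-definite, because the leading Schur complement vanishes, $\mathcal{B}_{d,0}=\mathcal{C}_{d,0}^{\top}\mathcal{A}_{d,0}^{-1}\mathcal{C}_{d,0}$ (the identity checked inside the proof of Proposition~\ref{prp:regularity}), so $\widetilde{M}_0$ has a kernel of dimension $\gamma\dim Q$. As semi-definiteness is not an open condition, mere convergence to $\widetilde{M}_0$ does not prevent an $\mathcal{O}(h^{2\gamma-1})$ perturbation from pushing the near-zero eigenvalues slightly negative. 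To overcome this I would argue through the Schur complement of the now positive-definite block $\mathcal{A}_k$: one has $\mathrm{H}^{L_d}_k\succeq0$ if and only if $\mathcal{S}_k:=\mathcal{B}_k-\mathcal{C}_k^{\top}\mathcal{A}_k^{-1}\mathcal{C}_k\succeq0$, and the corresponding exact Schur complement $\mathcal{S}_k^{e}$ is $\succeq0$ by Proposition~\ref{prp:regularity} (since $\mathrm{H}^{L_d^e}_k\succeq0$ and $\mathcal{A}_d\succ0$). The crux is then an order comparison on the kernel of $\widetilde{M}_0$: one must show that the sub-leading part of $\mathcal{S}_k^{e}$ is positive of some order $h^{s}$ with $s<2\gamma-1$ and that the discrete correction $\mathcal{S}_k-\mathcal{S}_k^{e}$ is $o(h^{s})$, so that $\mathcal{S}_k\succ0$ (a fortiori $\succeq0$) for small $h>0$. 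This is exactly where the $2\gamma-1$ orders gained from the consistency assumption are consumed, and carrying out this bookkeeping on $\mathcal{S}_k^{e}$ and its perturbation is the technical heart of the argument. Once $\mathrm{H}^{L_d}_k\succeq0$ and $\mathcal{A}_k\succ0$ hold for every $k$ and every sufficiently small $h>0$, both hypotheses of Theorem~\ref{prp:discrete_positive_def} are in force, and local convergence of the block Jacobi and $1$-step block Jacobi--Newton methods follows from Remark~\ref{rem:Hpd-implies-convergence}, giving the statement in a deleted neighborhood of $h=0$.
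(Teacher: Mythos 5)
Your overall strategy coincides with the paper's: combine Proposition~\ref{prop-order} (with $r=2\gamma-1$) with Proposition~\ref{prp:regularity} to transfer the definiteness properties of $\mathrm{H}^{L_d^e}_k$ to $\mathrm{H}^{L_d}_k$. The first half of your argument is sound: after the diagonal rescaling $\Lambda$ the exact blocks $\mathcal{A}_d$, $\mathcal{B}_d$ converge to the fixed positive-definite matrices $\mathbf{D}\mathbf{C}^{-1}(\pm h)\mathbf{D}\otimes\mathcal{W}$ (up to sign), the rescaled perturbation coming from Proposition~\ref{prop-order} is $\mathcal{O}(h^{2\gamma-1-\alpha-\beta})\to 0$ entrywise, and openness of positive-definiteness gives $\mathcal{A}_k\succ0$ and $\mathcal{B}_k\succ0$ for small $h>0$ (here even the crude $\mathcal{O}(1)$ bound suffices; your sharpened $\mathcal{O}(h^{\alpha+\beta})$ estimate is not needed). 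You have also put your finger on precisely the step that the paper's own proof does not justify: since the leading part $\widetilde M_0\otimes\mathcal{W}$ of $\mathrm{H}^{L_d^e}_k$ has a $\gamma\dim Q$-dimensional kernel, neither the discretization error nor the ``h.o.t.''\ of the exact Hessian itself can be absorbed by an openness argument, and the paper's conclusion ``therefore $\mathrm{H}^{L_d}$ is positive-definite for small enough $h$'' does not follow from $\mathrm{H}^{L_d}=\mathrm{H}^{L_d^e}+\mathcal{O}(1)$.

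The gap is genuine, however, because the completion you sketch cannot be carried out: the positivity of the sub-leading part of the exact Schur complement $\mathcal{S}_k^e$ on the kernel of $\widetilde M_0$ is false in general, so no amount of bookkeeping with the $2\gamma-1$ orders of consistency will close the argument. Already for $\gamma=1$, $Q=\mathbb{R}$ and $L=\tfrac12\dot q^2-\tfrac12 q^2$ (so $\mathcal{W}=1>0$), the exact discrete Lagrangian is $L_d^e(q_0,q_1)=\frac{1}{2\sin h}\bigl[(q_0^2+q_1^2)\cos h-2q_0q_1\bigr]$, whose Hessian $\frac{1}{\sin h}\left(\begin{smallmatrix}\cos h&-1\\-1&\cos h\end{smallmatrix}\right)$ has determinant $-1$ and the negative eigenvalue $(\cos h-1)/\sin h\sim-h/2$; equivalently $\mathcal{S}^e=\cot h-\frac{1}{\sin h\cos h}=-\tan h<0$. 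Thus $\mathrm{H}^{L_d^e}_k$ is indefinite, the sub-leading term on the kernel direction has the \emph{wrong} sign whenever the lower-order part of $L$ is concave in $q$ (convex potential), and per-step positive semi-definiteness fails no matter how accurate $L_d$ is. (This is consistent with the fact that for the oscillator the full Hessian $\mathrm{H}(q_d^{[0]})$ is positive-definite only for $Nh<\pi$, i.e.\ before the first conjugate point, whereas the per-step criterion of Theorem~\ref{prp:discrete_positive_def} would yield positivity for every $N$.) So the missing step is not merely technical: it is the point at which both your argument and the paper's (via the semi-definiteness claim in Proposition~\ref{prp:regularity}, whose proof only controls the leading term) require an additional hypothesis --- for instance positive-definiteness of the full limiting coefficient matrix of the discretization, or a sign condition on the lower-order terms of $L$ --- rather than a finer error estimate.
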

\begin{proof}
If $L_d\colon T^{(\gamma-1)}Q\times T^{(\gamma-1)}Q \to \mathbb{R}$ is a discrete Lagrangian that is consistent to order $2\gamma - 1$ with a Lagrangian $L\colon T^{(\gamma)}Q \to \mathbb{R}$ 
then from Proposition \ref{prop-order} we will have that 
\begin{align*}
\frac{\partial^2 L_d}{\partial q^{(\alpha)\, i}_u \partial q^{(\beta)\, j}_v} 
=\frac{\partial^2 L^e_d}{\partial q^{(\alpha)\, i}_u \partial q^{(\beta)\, j}_v}+{\mathcal O}(1), \quad 1\leq \alpha,\beta\leq \gamma-1
\end{align*}
but from Proposition \ref{prp:regularity} we immediately deduce that
\[
\mathrm{H}^{L_d}=\mathrm{H}^{L_d^e}+{\mathcal O}(1).
\]
Therefore, $\mathrm{H}^{L_d}$ is positive-definite for small enough $h$, which implies the local convergence of the block Jacobi and 1-step block Jacobi--Newton methods.
\end{proof}

As we have seen in Example~\ref{exp:trapezoidal} given an arbitrary discrete Lagrangian, in order to ensure the convergence of our method one can perform a direct check on its Hessian. However, knowing the order of the discrete Lagrangian we can automatically obtain this convergence by invoking Theorem~\ref{thm:Ld-from-L-defpos}. 

\begin{example}
\label{exp:2stageGauss}
Consider the discrete second order Lagrangian based on the 2-stage Gauss method given by
\begin{equation*}
L_d(q_0,v_0,q_1,v_1) = \frac{h}{2} \left[ L\left(Q_1,V_1,A_1\right) + L\left(Q_2,V_2,A_2\right) \right]\,.
\end{equation*}
with
\begin{align*}
Q_1 &:= \left(\frac{1}{2} - \frac{\sqrt{3}}{6}\right) q_1 + \left(\frac{1}{2} + \frac{\sqrt{3}}{6}\right) q_0 - \frac{h}{12} \left(v_1 - v_0\right)\,,\\
Q_2 &:= \left(\frac{1}{2} + \frac{\sqrt{3}}{6}\right) q_1 + \left(\frac{1}{2} - \frac{\sqrt{3}}{6}\right) q_0 - \frac{h}{12} \left(v_1 - v_0\right)\,,\\
V_1 &:= \frac{q_1 - q_0}{h} - \frac{\sqrt{3}}{6} \left(v_1 - v_0\right)\,,\\
V_2 &:= \frac{q_1 - q_0}{h} + \frac{\sqrt{3}}{6} \left(v_1 - v_0\right)\,,\\
A_1 &:= \frac{1}{h}\left[(1 - \sqrt{3}) v_1 - (1 + \sqrt{3}) v_0\right] + \frac{2 \alpha}{h^2} (x_1 - x_0)\,,\\
A_2 &:= \frac{1}{h}\left[(1 + \sqrt{3}) v_1 - (1 - \sqrt{3}) v_0\right] - \frac{2 \alpha}{h^2} (q_1 - q_0)\,.
\end{align*}

It can be shown that this discrete Lagrangian is consistent to order $4$ for any regular Lagrangian $L$ (in the sense of Definition~\ref{def:order-higher}). Thus, if $L$ has a positive-definite Hessian matrix, we know that the discrete Lagrangian $L_d$ satisfies all the hypothesis of Theorem~\ref{thm:Ld-from-L-defpos} for small enough $h$. 
\end{example}

\begin{remark}
	It is rather straightforward to generalize this result to variable-step discretizations, with $h^* \geq \max_k h_k$ and $h_k = t_{k+1}-t_k$ for $k = 0,...,N-1$.
\end{remark}

\begin{remark}
Theorem~\ref{thm:Ld-from-L-defpos} gives us a way to automatically guarantee that the conditions of Theorem~\ref{prp:discrete_positive_def} are satisfied, but it is quite restrictive in terms of the order of the discretization.

However, provided the continuous Lagrangian is regular and positive-definite, Proposition~\ref{prp:regularity} offers a simplified criterion to determine whether an arbitrary discretization scheme will lead to a discrete Lagrangian satisfying the conditions of Theorem~\ref{prp:discrete_positive_def}. From the proof of the proposition we see that it suffices to ensure that the matrix of coefficients of the leading terms of the Hessian, which depends solely on the discretization, is regular and positive-definite. Thus, the conditions will be satisfied if and only if the resulting discrete Lagrangian of any other continuous Lagrangian with the same properties satisfies the conditions of Theorem~\ref{prp:discrete_positive_def}. In particular one may check with the simple Lagrangian $L (q^{[\gamma]}) = \frac{1}{2} \left\Vert q^{(\gamma)} \right\Vert^2$.
\end{remark}

\subsection{Convergence for time-dependent Lagrangian systems}
In this section, we will briefly show that the results about convergence  of the parallel iterative method for variational integrators also apply for time-dependent Lagrangian systems. 

Consider a regular $C^{2\gamma}$ Lagrangian $L\colon {\mathbb R} \times T^{(\gamma)}Q\rightarrow {\mathbb R}$ and define the exact discrete Lagrangian of the $k$-th interval, $[kh, (k+1)h]$, by analogy with the autonomous case (see \eqref{exact-gamma}): 
\begin{equation}\label{exact-gamma-time dependent}
L_{d,k}^{e}(q^{[\gamma-1]}_k, q^{[\gamma-1]}_{k+1})=
\int^{(k+1)h}_{kh} L(t, q^{[\gamma]}(t))\; dt, \quad 0\leq k\leq N-1\,. 
\end{equation}
Here $q\colon [kh, (k+1)h]\rightarrow Q$ denotes the unique $C^{2 \gamma}$ solution curve of the Euler--Lagrange equations for $L$ satisfying the boundary conditions 
$q^{[\gamma-1]}(kh)=q^{[\gamma-1]}_k$ and $q^{[\gamma-1]}((k+1)h)=q^{[\gamma-1]}_{k+1}$. 

By $L_{d,k}\colon T^{(\gamma-1)} Q \times T^{(\gamma-1)} Q \to \mathbb{R}$ we denote discrete $\gamma$-th order Lagrangians that are approximations of the exact discrete Lagrangians by extending Definition~\ref{def:order-higher}. 

The parallel method is easily adapted to this case considering the following modification of the algorithm proposed in Equation~\eqref{eq:pDEL} 
\begin{equation}\label{eq:pDEL-time-dependent}
  D_2L_{d, k-1}(q^{[\gamma-1]}_{k-1},\bar q^{[\gamma-1]}_k)+D_1L_{d,k}(\bar q^{[\gamma-1]}_k,q^{[\gamma-1]}_{k+1})=0, \qquad 1\leq k\leq N-1\, .  
\end{equation}
It is straightforward to check that Theorem~\ref{prp:discrete_positive_def} and Theorem~\ref{thm:Ld-from-L-defpos} are still valid in the time-dependent case provided that the regularity and definiteness conditions hold in the time interval under consideration. We will see some applications to navigation problems and an astrodynamical example where it is necessary to resort to time-dependent Lagrangians and their discretization (see Subsection~\ref{examples-time-dependent}).

\section{Examples}\label{sec-examples}

\subsection{Zermelo's navigation problem}\label{section-zermelo}

Zermelo's navigation problem \cite{Zermelo, Bao, Kopacz, java} is usually presented as a time-optimal control problem, which aims to find the minimum time trajectories on a Riemannian manifold $(Q, g)$ under the influence of a drift vector field $W\in {\mathfrak X}(Q)$, which can be interpreted as wind (or water currents). The goal is to find a navigation path $\gamma(s)$, for $s$ in a given interval $[s_0,s_N]$, joining two points in $Q$ in the shortest possible time in the presence of this wind. The physical time $t$ is related to $s$ via an increasing function $t(s)$ that can be computed from $\gamma$ (see Section~\ref{sec:zermelotimedependent}). We assume that the ship engine provides a constant thrust relative to the wind, that is, $\big|\frac{d\gamma}{dt}(t(s))-W(\gamma(s))\big|=1$, where $|\cdot|$ denotes the norm provided by $g$. It is also assumed that $|W(q)|< 1$ for all $q\in Q$.

These minimum time trajectories are precisely the geodesics for a particular type of Finsler metric, a Randers metric defined by (see \cite{Bao} and references therein)
\begin{equation*}
F(q,v_q)=\sqrt{a(v_q, v_q)}+\langle b(q), v_q\rangle
\end{equation*}
where 
\begin{align*}
a(v_q, v_q)&=\frac{1}{\alpha(q)}g(v_q,v_q)+\frac{1}{\alpha(q)^2}g(W(q), v_q)^2\\
\langle b(q), v_q\rangle&=-\frac{1}{\alpha(q)} g(W(q), v_q)=-\left\langle \frac{\flat_g({W}(q))}{\alpha(q)}, v_q\right\rangle \\
\alpha(q)&=1-g(W(q),W(q))=1-|W(q)|^2>0.
\end{align*}
Here $\flat_g\colon {\mathfrak X}(Q)\rightarrow \Omega^1(Q)$ is the musical isomorphism defined by $\langle \flat_g(X), Y\rangle=g(X,Y)$ for all $X, Y\in {\mathfrak X}(Q)$. 

The time it takes the ship to move along a curve $\gamma\colon [s_0, s_N]\to Q$ is
\begin{equation}\label{eq:intF}
\int_{s_0}^{s_N} F(\gamma(s), \dot \gamma(s))\,ds.
\end{equation}
Note that this integral is invariant under orientation-preserving reparametrizations of $\gamma$, since Finsler metrics are positively 1-homogeneous, that is, $F(q,\lambda v_q)=\lambda F(q,v_q)$ for any $\lambda>0$. Therefore, the solution curves are not unique; in fact, $F$ is not regular as a Lagrangian function. Similar to the case of Riemannian metrics and the problem of minimizing length or energy, this can be circumvented by considering instead the functional
\begin{equation}\label{eq:intF2}
\int_{s_0}^{s_N} \big(F(\gamma(s),\dot\gamma(s))\big)^2\,ds .
\end{equation}
Any extremal of this functional will be an extremal of \eqref{eq:intF}, and any extremal of \eqref{eq:intF} admits an orientation-preserving reparametrization that makes it an extremal of \eqref{eq:intF2} (see \cite{Masiello} and references therein).

As a particular case, consider $Q={\mathbb R}^2$ with the Euclidean metric, where we are to find critical curves $(x,y)=(x(s),y(s))$ for the functional
\begin{equation}\label{eq:functionalFinslerR2}
\int_{s_0}^{s_N}\left[\sqrt{\frac{1}{\alpha}(\dot{x}^2+\dot{y}^2)+\frac{1}{\alpha^2}(W_1(x,y)\dot{x}+W_2(x, y)\dot{y})^2}-\frac{1}{\alpha} \left(W_1(x, y)\dot{x}+W_2(x, y)\dot{y}\right)\right]^2\, ds
\end{equation}
with $\alpha=1-(W_1^2+W_2^2)$. Here $\dot x=dx/ds$, $\dot y=dy/ds$.

Figure \ref{fig:timeoptimal} shows six local solutions to the navigation problem found using our approach, starting at $(0,0)$ and ending at $(6,2)$, for the vector field
$W=1.7\cdot(-R_{2,2}-R_{4,4}-R_{2,5}+R_{5,1})$, where
\[
R_{a,b}(x,y)=\frac{1}{3((x-a)^2+(y-b)^2)+1}\begin{pmatrix}
-(y-b)\\x-a
\end{pmatrix}.
\]
The scale factor $1.7$ was chosen so that the maximum value of $|W|$ is almost 1.

For this example, we used the Jacobi-Newton method \eqref{eq:NewtonRaphson} with the discrete Lagrangian
\[
L_d(q_0,q_1)=\frac{h}{2} \left(F^2\left(q_0,\frac{q_1-q_0}{h}\right)+F^2\left(q_1,\frac{q_1-q_0}{h}\right)\right).
\]
These solutions were obtained using different piecewise straight lines as initial guesses for the trajectories and with $N=80$.

The total navigation time~\eqref{eq:intF}, which is displayed beside each trajectory, is locally optimal. In general, finding a global minimum will require exploring different initial guesses.

\begin{figure}
	\begin{center}
		\includegraphics[scale=.7]{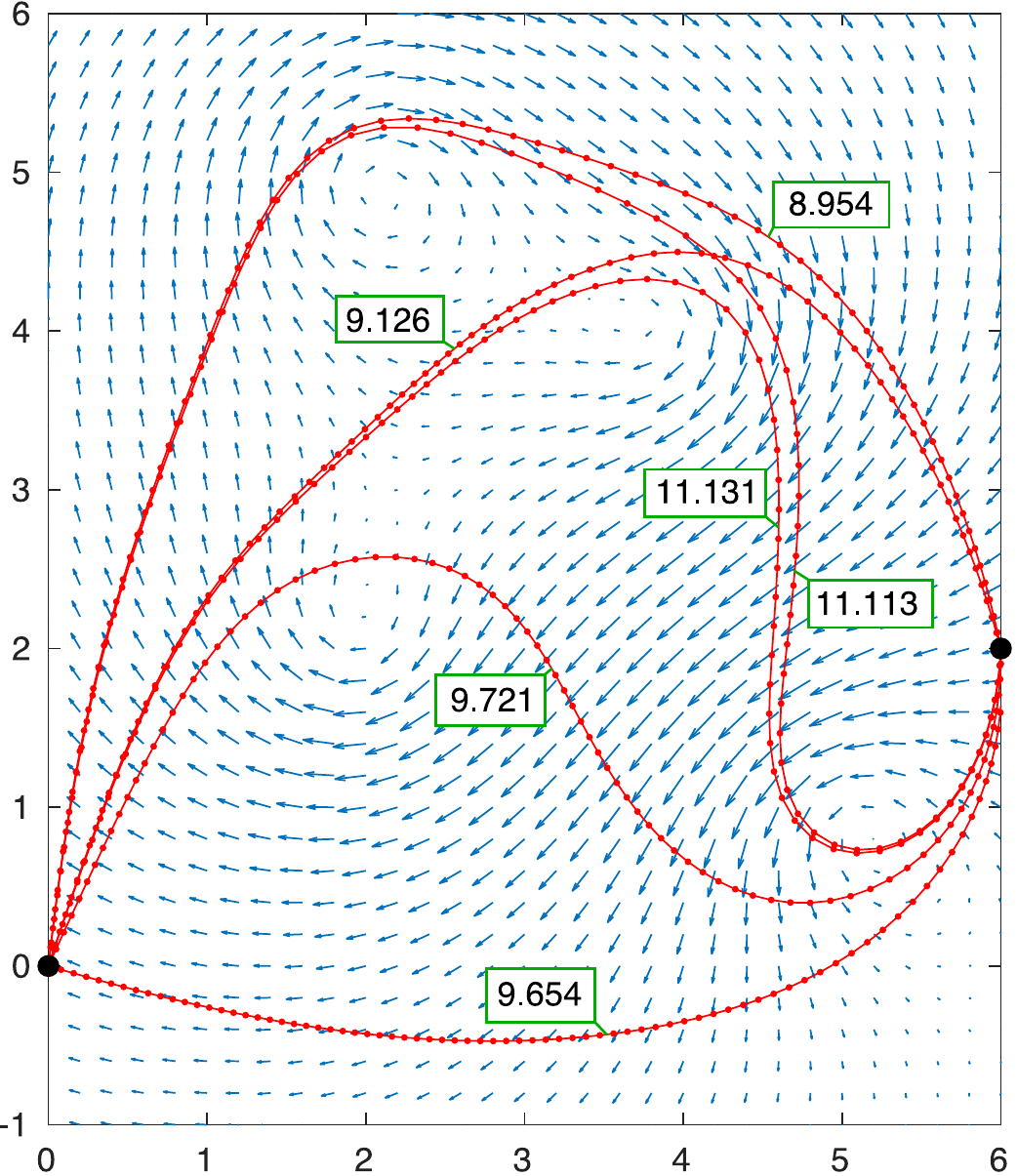}
	\end{center}
	\caption{Several local solutions to the optimal time navigation problem starting from $(0,0)$ and ending at $(6,2)$. The time for each trajectory is shown.}\label{fig:timeoptimal}
\end{figure}

\subsection{Fuel-optimal navigation problem}\label{fuel}
We also consider a non-equivalent variant of Zermelo's problem. If $T>0$ is a fixed time, we seek trajectories minimizing the cost function
\begin{equation*}
\int_0^T \frac{1}{2}(u_1^2+u_2^2)\; dt\,,
\end{equation*}
which can be interpreted as a measure of fuel expenditure.
The system is subject to the control equations
\begin{align*}
\dot{x}&= u_1 + W_1(x, y)\,,\\
\dot{y}&= u_2 + W_2(x, y)\,.
\end{align*}
The goal is to arrive at a given destination at time $T$, extremizing fuel expenditure with no a priori bounds on the engine's power. 
This problem is equivalent to solving the Euler--Lagrange equations for the Lagrangian
\begin{equation*}
L(x, y, \dot{x}, \dot{y}) = \frac{1}{2} \left[(\dot{x}-W_1(x,y))^2+ (\dot{y}-W_2(x,y))^2\right]\,,
\end{equation*}
with fixed $(x(0), y(0))$ and $(x(T), y(T))$ as boundary conditions.

For our simulations, we considered $W(x,y)=(
\cos(2x-y-6),
\frac{2}{3}\sin(y)+x-3)$.
We discretized the Lagrangian as 
\[
L_d(q_0,q_1)=\frac{h}{2} \left(L\left(q_0,\frac{q_1-q_0}{h}\right)+L\left(q_1,\frac{q_1-q_0}{h}\right)\right).
\]

\begin{figure}
	\begin{center}
		\includegraphics[scale=.7]{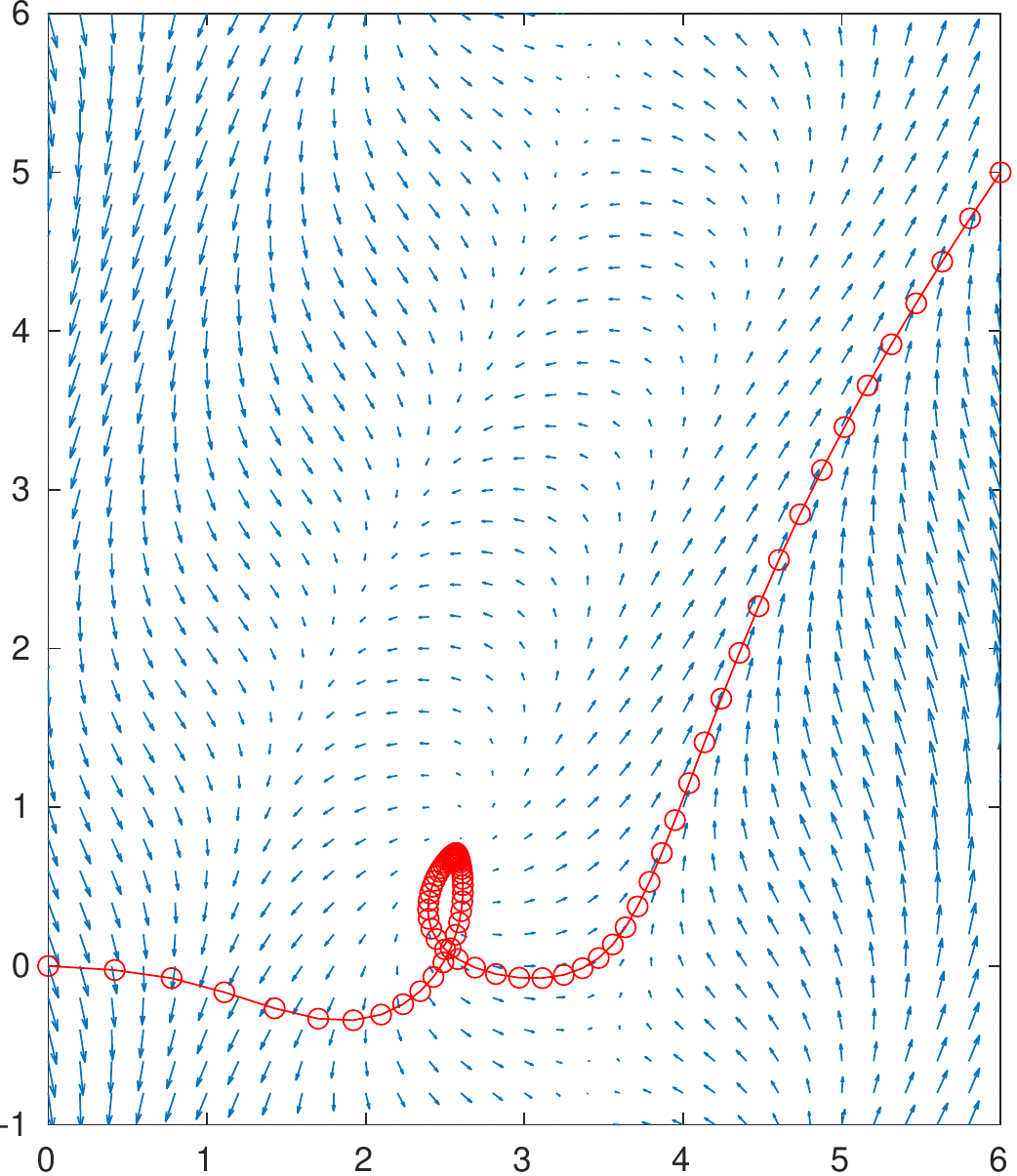}
	\end{center}
	\caption{A minimal fuel trajectory for a fixed total duration $T=30$, joining $(0,0)$ to $(6,5)$, with $N=200$. }\label{fig:fueloptimal}
\end{figure}

Figure \ref{fig:fueloptimal} shows a trajectory that has a locally optimal fuel expenditure among the discrete curves joining the given points $(0,0)$ and $(6,5)$ in time $T=30$. A straight line was used as the initial guess. Notice that in the first part of its journey, the ship travels to an equilibrium point of $W$, where it awaits the right moment to continue to its destination, which must be reached at the specified time. We emphasize that in this variation of the problem the total travel time is imposed externally. Other values for $T$ will have optimal trajectories with different fuel expenditure. For $T<9$ (approximately) the optimal trajectories do not pass near the equilibrium mentioned above.

\subsection{Interpolation problems}\label{sec:interpolationzermelo}
In this section we explore the application of our parallel iterative method to the case of a second-order Lagrangian system (see Section~\ref{sec:variationaldiscreteequations}) in the context of interpolation problems. For instance,  this kind of problems appear when comparing a series of images in longitudinal studies \cite{crouch-leite,Invariant1,Invariant2}.  Let $N\in \mathbb{N}$, $[t_0,t_N]\subset \mathbb{R}$, and $h=(t_N-t_0)/N$.  
Assume that we have $l+1$ interpolation points or knots $\hat{q}_a\in Q$, $a=0,\dots ,l$, which are reached at times 
$\hat{t}_a =t_0+ N_a h$,
where $N_a\in \{0, 1, \ldots, N\}$,
$N_a< N_b$ if $a<b$, with $N_0=0$ and $N_l=N$.

In order to discretize this problem, we replace the integral~\eqref{ElasticSplines} by a sum over times $t_k=t_0+kh$ for $k=0, \ldots, N$. 
Following our approach in \cite{MR3562389}, we discretize the action as
\begin{equation} \label{ElasticSplines-d}
\mathcal{J}_d := \sum_{k=0}^{N-1} L_d(q_k, v_k, q_{k+1}, v_{k+1})
\end{equation}
where $L_d\colon TQ\times TQ\rightarrow {\mathbb R}$ is a discretization of $L$.
Moreover, the problem is subject to the interpolation constraints
\begin{equation}\label{aq1}
q_{N_a}=\hat{q}_a , \quad\text{for all }a=1,\ldots, l-1
\end{equation}
and the boundary conditions
\begin{equation}\label{aq2}
q_0=\hat{q}_0,\quad v_0= \hat{v}_0 ,\quad\text{and}\quad
q_N=\hat{q}_l,\quad v_N= \hat{v}_l .
\end{equation}

Our parallel integrator works as follows. Take an arbitrary sequence $\{(q_k, v_k)\}$ satisfying the interpolation constraints \eqref{aq1} and the boundary conditions \eqref{aq2}.
Now construct the sequence $\{(\bar{q}_k, \bar{v}_k)\}$ by solving the parallelized problem 
\begin{align}\label{eq1-interpola}
D_3L_d(q_{k-1}, v_{k-1}, \bar{q}_{k}, \bar{v}_k)
+D_1L_d(\bar{q}_{k}, \bar{v}_k ,q_{k+1}, v_{k+1})&=0\; ,\\
D_4L_d(q_{k-1}, v_{k-1}, \bar{q}_{k}, \bar{v}_k)
+D_2L_d(\bar{q}_{k}, \bar{v}_k ,q_{k+1}, v_{k+1})&=0
\end{align}
if $1\leq k\leq {N}-1$ and $k\not=N_a$, $1\leq a \leq l - 1$. At each knot $k=N_a$, $1\leq a\leq l-1$, take $\bar{q}_{N_a}=q_{N_a}$ and compute $\bar{v}_{N_a}$ by solving the equation
\begin{equation}\label{eq2-interpolation}
D_4L_d(q_{N_a-1}, v_{N_a-1}, \bar{q}_{N_a}, \bar{v}_{N_a})
+D_2L_d(\bar{q}_{N_a}, \bar{v}_{N_a}, q_{N_a+1}, v_{N_a+1})=0\; .
\end{equation}
Finally, take $(\bar{q}_0,\bar{v}_0)=(q_0,v_0)$, $(\bar{q}_N,\bar{v}_N)=(q_N,v_N)$. 
Observe that the derived sequence $\{(\bar{q}_k, \bar{v}_k)\}$, $k=0, \ldots, N$
satisfies the 
interpolation constraints
\[
\bar{q}_{N_a}=\hat{q}_a , \quad\text{for all } a=1,\ldots, l-1
\]
and the boundary conditions 
\begin{equation*}
\bar q_0=\hat{q}_0,\quad \bar v_0= \hat{v}_0 ,\quad\text{and}\quad
\bar q_N=\hat{q}_l,\quad \bar v_N= \hat{v}_l \; .
\end{equation*}
The convergence results given in Section \ref{sec:convergence} are directly applicable to this case. In fact, the corresponding Hessian matrix for the interpolation problem is simply the restriction of the Hessian matrix $\mathrm{H}(q_d^{[\gamma-1]})$ to a subspace. Then, if the unrestricted Hessian is positive-definite, so will be any restriction to a subspace. Therefore, by iterating this procedure, we approach a trajectory having a locally optimum value of the cost functional \eqref{ElasticSplines-d}.

\subsubsection{An application: fuel-optimal control problem with a weight minimizing the total variation in the control variables }\label{example:interpolation}
As a modification of the application given in Section \ref{fuel}, consider the following optimal control problem. Our aim is still minimizing the fuel expenditure functional while also minimizing the total variation in the control variables. 
Now the goal is to navigate from a departure point to a destination point passing through given waypoints (knots) at prescribed times, minimizing the cost functional
\[
\int_0^T \frac{1}{2}(u_1^2+u_2^2 +c v_1^2+cv_2^2)\; dt
\]
subject to the control equations
\begin{equation*}
\begin{array}{rclrcl}
\dot{x} &=& u_1+W_1(x, y), & \quad\dot{y} &=& u_2+W_2(x,y),\\
\dot{u}_1&=&v_1, & \dot{u}_2&=&v_2\,.
\end{array}
\end{equation*}
Here $c > 0$ is a weight.

The continuous problem is equivalent to solving the fourth-order Euler--Lagrange equations for the second-order Lagrangian
\begin{multline*}
L(x, y, \dot{x}, \dot{y}, \ddot{x}, \ddot{y})=\frac{1}{2} \left[(\dot{x}-W_1(x,y))^2+ (\dot{y}-W_2(x,y))^2\right.\\	
+c\left(\ddot{x}- D_1W_1(x(t),y(t))\dot{x}- 
D_2W_1(x(t),y(t))\dot{y}\right)^2
\\
\left.
+c\left(\ddot{y}- D_1W_2(x(t),y(t))\dot{x}- 
D_2W_2(x(t),y(t))\dot{y}\right)^2
\right]
\end{multline*}
As boundary conditions, we consider $(q(0), \dot q(0))$ and $(q(T), \dot q(T))$ fixed. In addition, the system is subject to the
interpolation constraints
\begin{equation}\label{aq1example}
q(\hat{t}_a)=\hat{q}_a , \quad\text{for all }a=1,\ldots, l-1
\end{equation}
with $0<\hat{t}_a<\hat{t}_b<T$ for all $a, b\in\{1,\ldots, l-1\}$ and $a<b$.

As a discretization of the cost function we propose, for instance, a 2-stage Lobatto discretization (see Example \ref{exp:trapezoidal}):
\begin{align*}
&L_d(q_k, v_k, q_{k+1}, v_{k+1}) =\\
&\quad\frac{h}{2} \left[ L\left(q_k, v_k, \frac{2}{h^2}(3(q_{k+1}-q_k)-h(v_{k+1}+2v_k)\right) \right.\\
&\quad + \left. L\left(q_{k+1}, v_{k+1}, -\frac{2}{h^2}(3(q_{k+1}-q_k)-h(2v_{k+1}+v_k)\right)\right]
\end{align*}

In Figure \ref{fig:secondorder} we show an optimal trajectory starting at $(0,0)$ and ending at $(3,5)$ at $T=60$, with zero velocity at both endpoints, and passing through the prescribed positions $(1,3)$ and $(5,2)$ at times $20$ and $40$ respectively. The vector field $W$ is the same as in the previous example. We used $c=50$ and $N=240$.

All of the examples have been computed using the Jacobi-Newton method mentioned in Section~\ref{sec:parallel_approach}.

\begin{figure}
	\begin{center}
		\includegraphics[scale=.7]{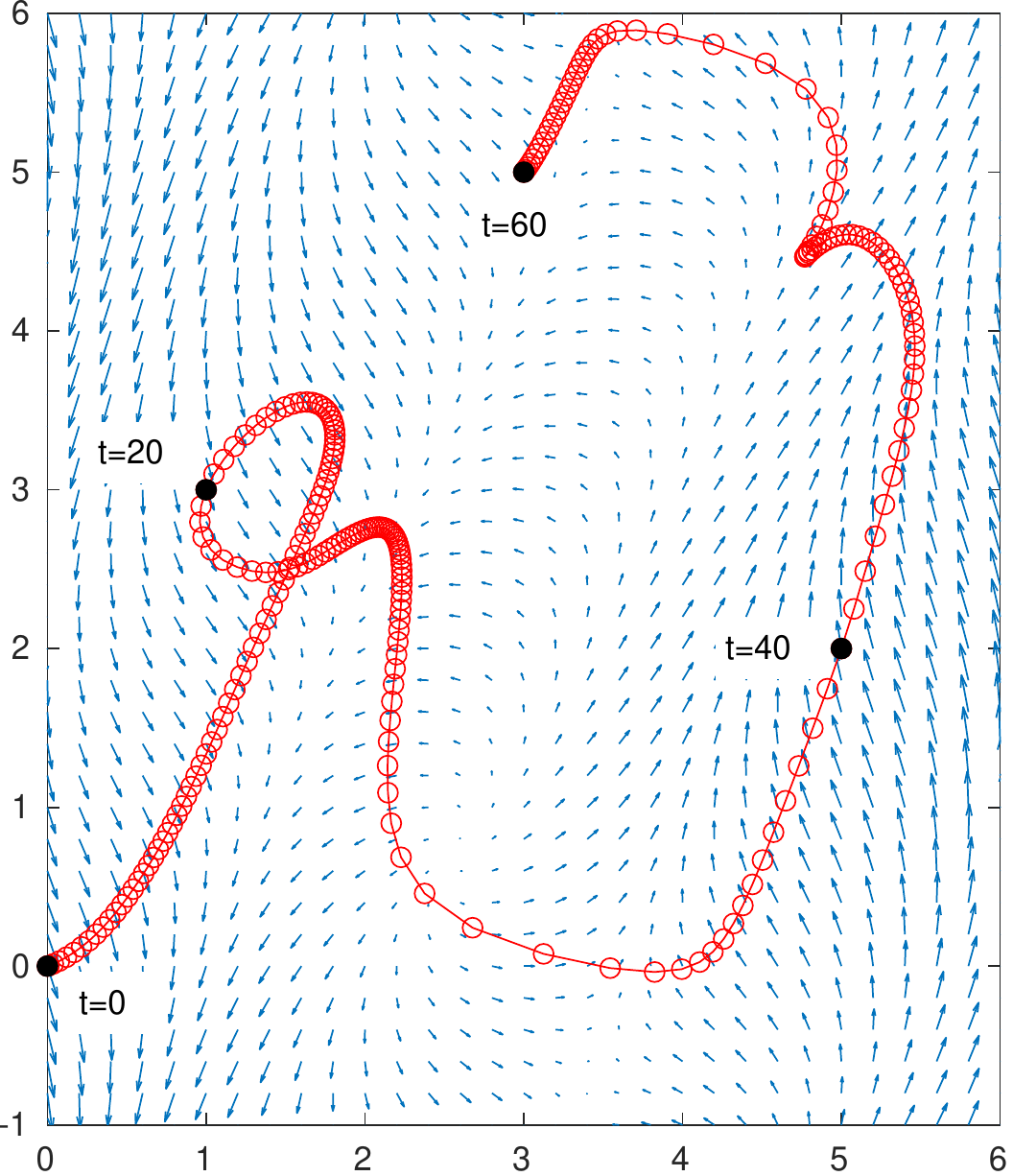}
	\end{center}
	\caption{An optimal trajectory for the second-order problem with interpolation nodes.}\label{fig:secondorder}
\end{figure}

\begin{remark} When implementing these methods, one can use some techniques to reduce their execution time and improve their behavior. One of them is starting with a coarse partition of the domain $[0,T]$ of the trajectory, that is, a low value of $N$. Once the discrete trajectory is reasonably stable, we refine the discretization by increasing the value of $N$, say 20\%, and interpolating. We continue iterating until it stabilizes again, and repeat until a desired value of $N$ is reached.

However, a low value of $N$ can make the trajectory unstable. In that case, we found that introducing a damping coefficient $\epsilon$ is useful. That is, if $\{q_k^{[k-1]}\}$ is the discrete trajectory at the current iteration and $\{\bar q_k^{[k-1]}\}$ is the adjusted trajectory computed by the method, we take the new discrete trajectory as $\{q_k^{[k-1]}+(1-\epsilon) (\bar q_k^{[k-1]}-q_k^{[k-1]})\}$.
\end{remark}

\section{Application to time-dependent systems}\label{examples-time-dependent}
\subsection{Zermelo's problem with varying wind}\label{sec:zermelotimedependent}
Here we revisit the time-optimal navigation problem of Section~\ref{section-zermelo}, considering a time-dependent vector field $W_t\in \mathfrak{X}(Q)$, with $t\in \mathbb{R}$.
Define the time-dependent Finsler metric $F_t(q,v_q)$ as in Section~\ref{section-zermelo}, using $W_t$ instead of $W$ in the expressions for $a$, $b$ and $\alpha$. Like its time-independent version, this is a positively 1-homogeneous function of $v_q$. The time it takes the ship to travel along a given curve
$\gamma\colon[s_0,s_N] \to Q$ is
\begin{equation*}
\int_{s_0}^{s_N} F_{t(s)}(\gamma(s), \dot \gamma(s))\,ds\,,
\end{equation*}
which is invariant under orientation-preserving reparametrizations of the curve. The function $t(s)$ relates the parameter $s$ and time, and satisfies the integral equation 
\begin{equation*}
t(s)=\int_{s_0}^s F_{t(\sigma)}\left(\gamma(\sigma),\dot\gamma(\sigma)\right)\, d\sigma\,,
 \end{equation*}
where we have set $t_0=t(s_0)=0$ for simplicity.

In order to discretize this computation, we write $h=(s_N-s_0)/N$, $s_k=s_0+kh$, $q_k=\gamma(s_k)$, $k=0,\dots,N$, and approximate $t(s_k)$ by
\begin{equation*}
  t(s_k)\approx \sum_{j=1}^{k} hF_{t_{j-1}}\left(q_{j-1},\frac{q_j-q_{j-1}}{h}\right)=:t_k, 
\end{equation*}
that is,
\begin{equation*}
 t_0=0,\qquad  t_k=t_{k-1} + hF_{t_{k-1}}\left(q_{k-1},\frac{q_k-q_{k-1}}{h}\right),\quad k=1,\dots,N.
\end{equation*}
Other choices such as the midpoint spatial discretization are also possible. This defines a sequence $\{t_k\}$ which must be updated as the Jacobi or Jacobi--Newton method proceeds. Note that this computation is sequential and cannot be performed in parallel. When implementing this, one could mitigate the impact on the performance by updating  $\{t_k\}$ every 100 or 1000 iterations for example.

Finally, define the time-dependent discrete Lagrangian as, for instance,
\[
L_{d,k}(q_k,q_{k+1})=\frac{h}{2} \left(F^2_{t_k}\left(q_k,\frac{q_{k+1}-q_k}{h}\right)+F^2_{t_{k+1}}\left(q_{k+1},\frac{q_{k+1}-q_k}{h}\right)\right),
\]
and use it to write the Jacobi or Jacobi--Newton methods. For example, the Jacobi method~\eqref{eq:DEL} becomes
\begin{equation*}
  D_2L_{d,k-1}(q_{k-1},\bar q_k)+D_1L_{d,k}(\bar q_k,q_{k+1})=0.
\end{equation*}

As an example, we applied this to the time-dependent vector field
\[W_t(x,y)=0.8 \sin(2x+y) \left(\cos(t/2),\sin(t/2))\right).\]
The ship is set to travel from $(1,6)$ to $(6,2)$ in minimum time. Here we used $N=50$. In Figure~\ref{fig:zermelotimedependent} the planned trajectory has red markers at $\gamma(s_k)$, $k=0,\dots,N$. The computed travel time $T$ was divided in $60$ equal intervals, which were used to plot the vectors in black, representing the wind that the ship will encounter when it passes through that point. The subfigures show the position of the ship and the surrounding vector field at times $0$, $T/3$, $2T/3$ and $T$. 

\begin{figure}[ht]
	\begin{center}
		\includegraphics[scale=.5]{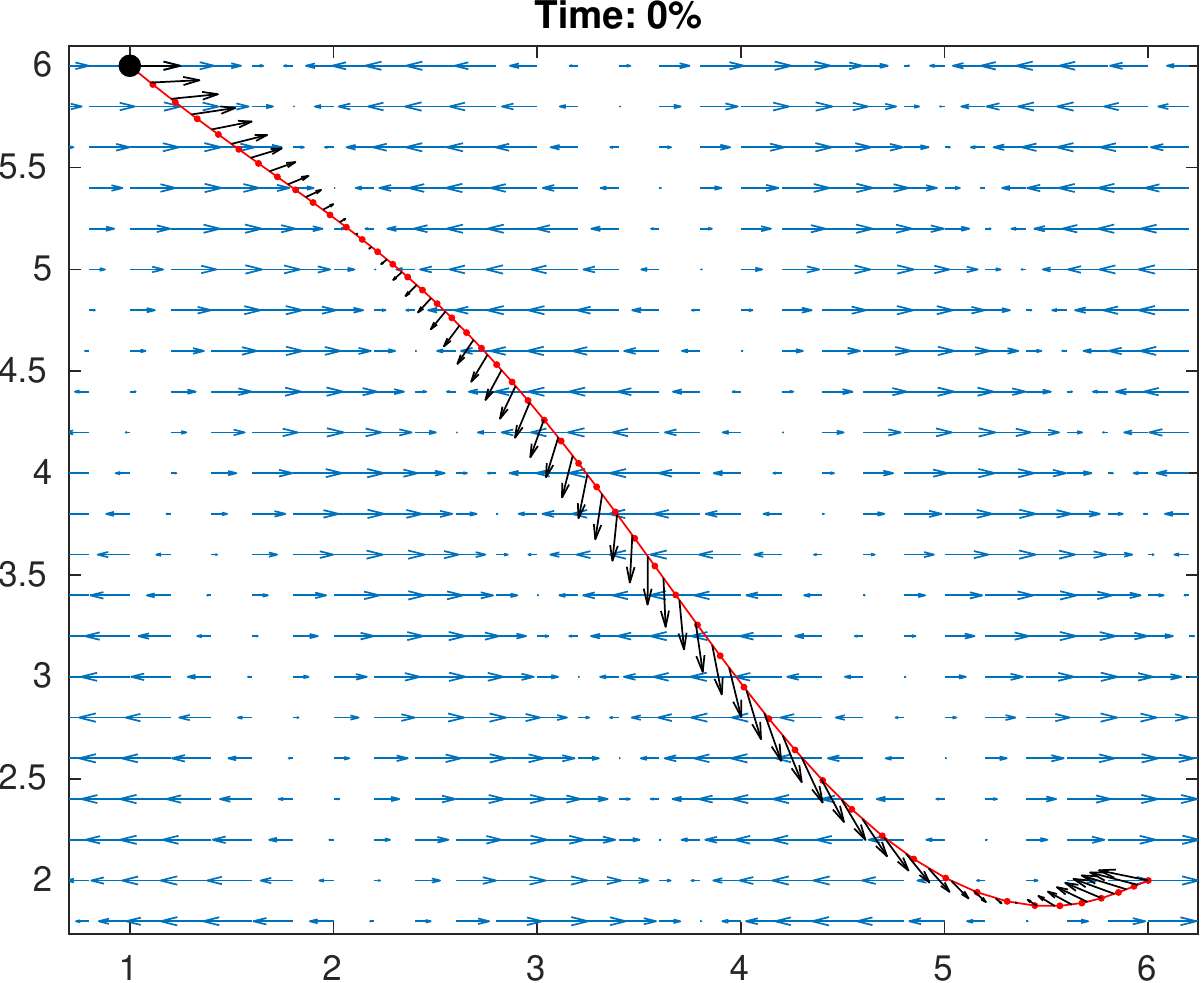}
		\includegraphics[scale=.5]{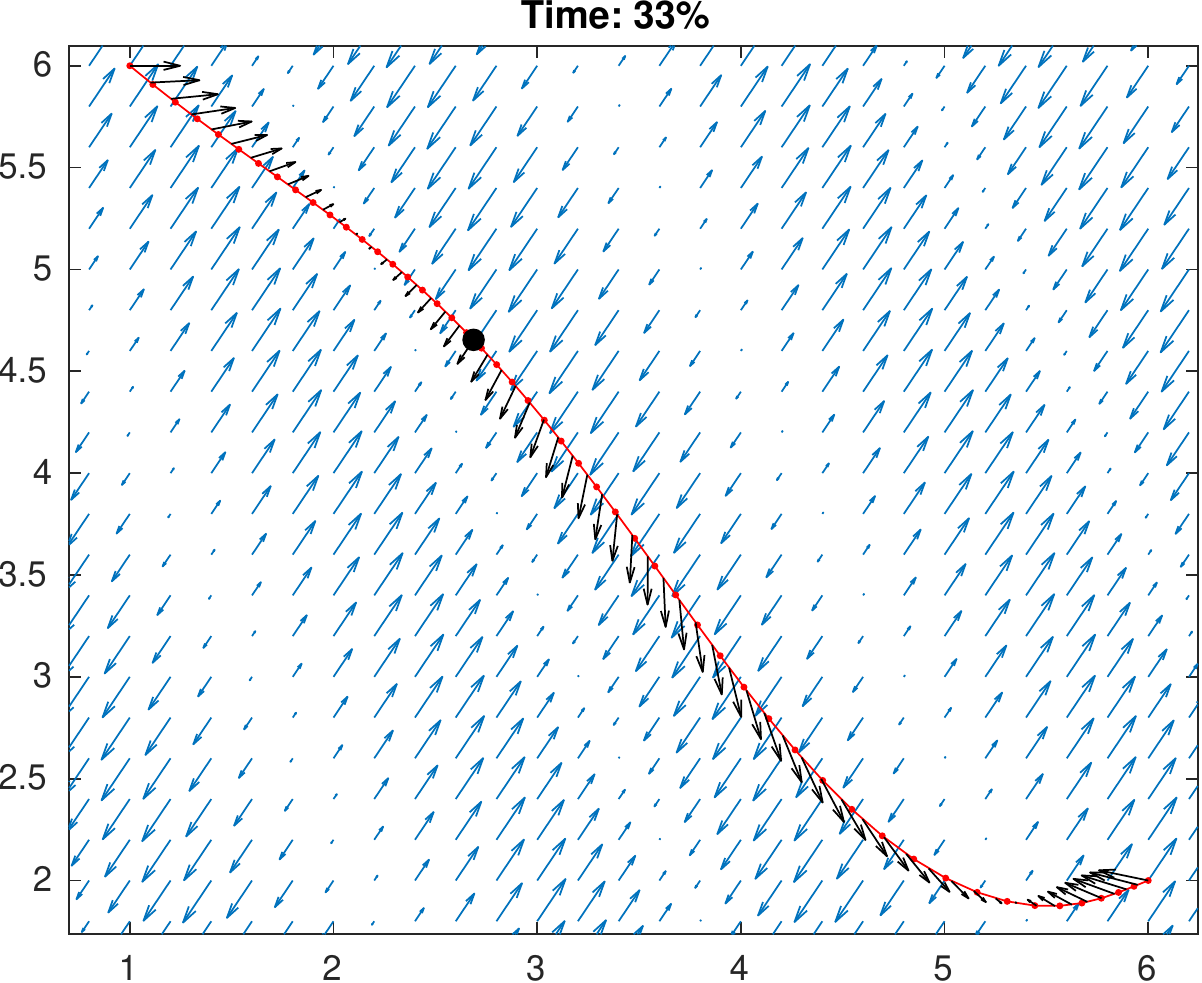}\\ \vspace{1ex}
		\includegraphics[scale=.5]{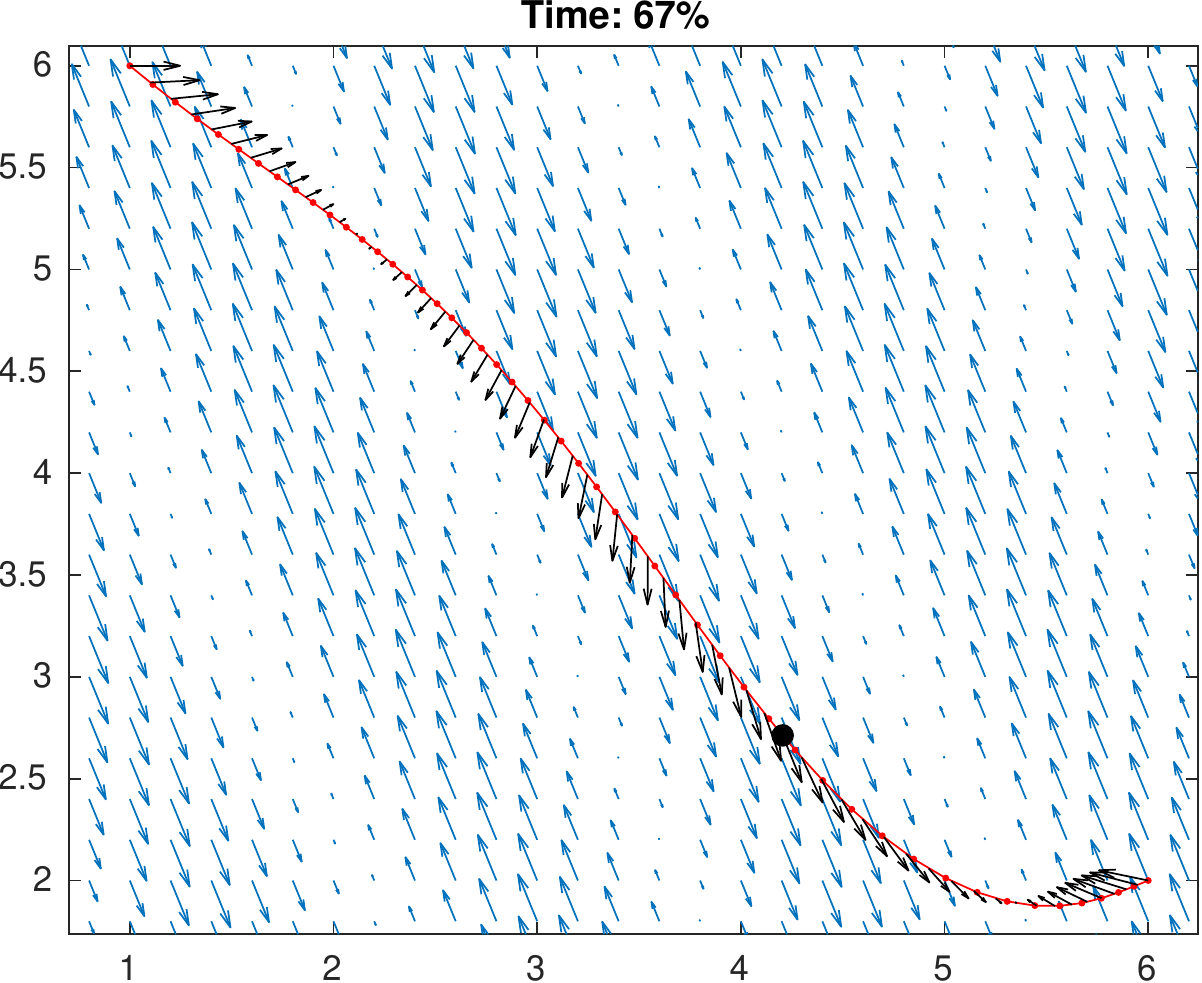}
		\includegraphics[scale=.5]{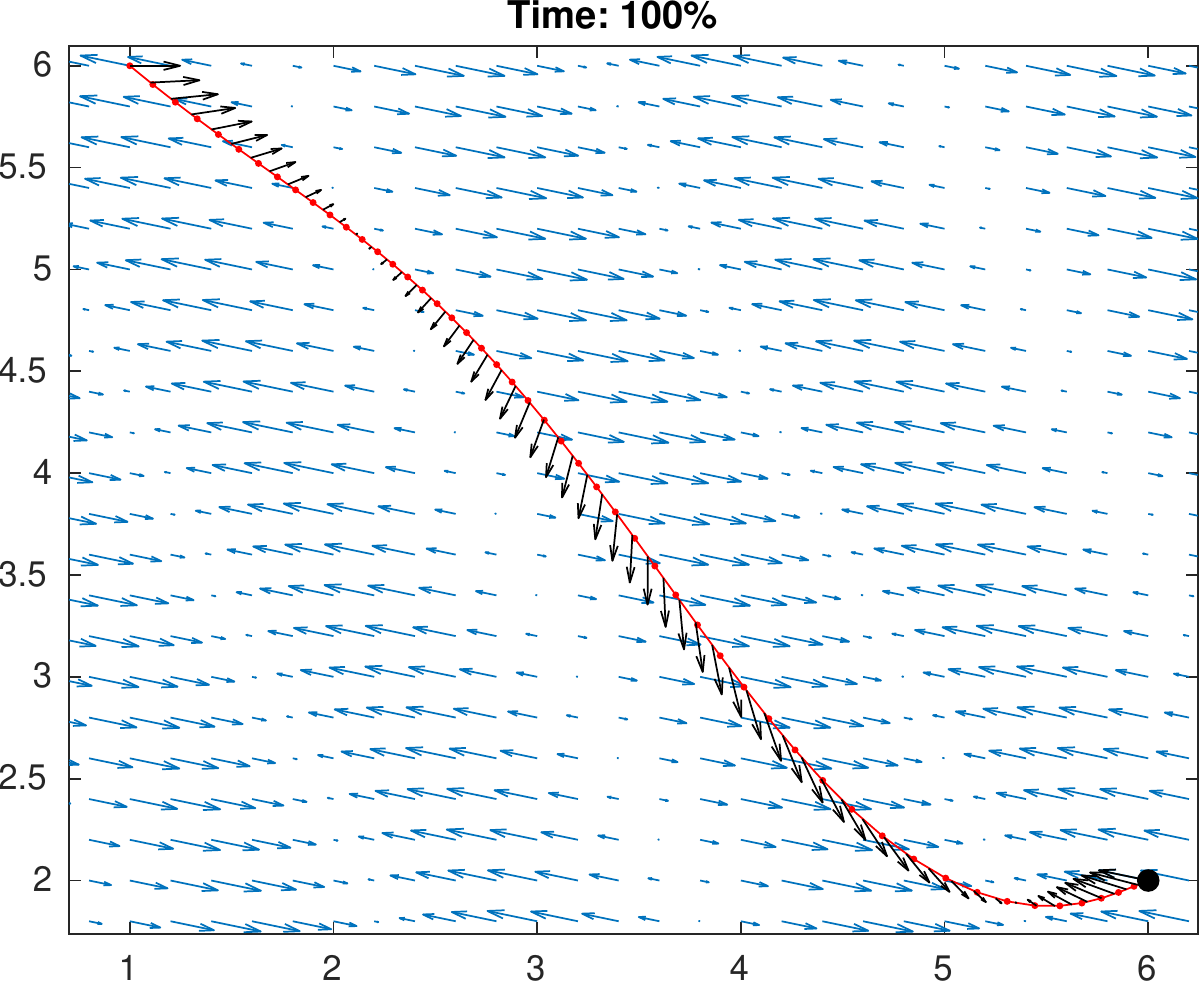}
	\end{center}
	\caption{An optimal trajectory with varying wind. The black dot is the ship, and the black vectors along the trajectory represent the wind that it will encounter in its journey. Each figure shows the vector field $W$ at different times. }\label{fig:zermelotimedependent}
\end{figure}

\subsection{Spacecraft trajectory planning}
The previous examples have been focused on variations of Zermelo's problem, but of course, our parallel iterative method can be applied in many other settings.
As a different example  consider a  fuel optimization problem for the controlled 4-body problem. As in Example~\ref{sec:zermelotimedependent}, it is necessary to use an extension of the parallel method for non-autonomous Lagrangian systems.

The model describes the dynamics of three bodies (Sun, Earth, Moon) among which a spacecraft is moving. Following a standard and simplified point of view \cite{Koon}, we  will assume that the three bodies move in a common plane, that the Moon rotates around the Earth in a circular motion and that Earth and Sun are both rotating in a circular motion about the center of mass of the three bodies. Observe  that in a Sun-Earth co-rotating frame both Sun and Earth are stationary. As usual, the mass of the spacecraft is assumed negligible. 
	The controlled equations  are
	\[
		\ddot{x}-2\dot{y}=\frac{\partial \Omega}{\partial x}+u_x\; ,\qquad 
			\ddot{y}-2\dot{x}=\frac{\partial \Omega}{\partial y}+u_y
			\]
		where 
		\[
		\Omega=\frac{(x+1)^2+y^2}{2}+\frac{m_S}{\sqrt{(x+1)^2+y^2}}+
			\frac{m_E}{\sqrt{x^2+y^2}} +\frac{m_M}{\sqrt{(x-x_M)^2+(y-y_M)^2}}
		\]
		where $m_S$, $m_E$ and $m_M$ are the normalized mass of Sun, Earth and Moon, respectively. The time-dependent  position of the Moon is $(x_M, y_M)$, with
		\begin{align*}
		\theta_M&=\omega_M t+\theta_{M0}\\
		x_M&= r_M\cos \theta_M\\
		y_M&= r_M \sin \theta_M
		\end{align*}
	 $\theta_{M0}$  is the initial angle of the Moon with respect to the $x$-axis in the Sun-Earth rotating frame, $r_M$ is the normalized radius of the Moon’s circular orbit, and $\omega_M$ is the normalized rotation rate of the Moon. Time is measured in years$/2\pi$.
	 
	As in Example \ref{fuel} the spacecraft seeks trajectories minimizing the fuel consumption functional given by
	\[
	\int_0^T (u_{x}^2+u_y^2)\, dt
	\]
	This can be written as a second order time-dependent Lagrangian function
	\[
	L(t, x, y, \dot{x}, \dot{y}, \ddot{x}, \ddot{y})=\left(\ddot{x}-2\dot{y}-\frac{\partial \Omega}{\partial x}\right)^2+	\left(\ddot{y}-2\dot{x}-\frac{\partial \Omega}{\partial y}\right)^2
	\]
  For our simulations we may use the same discretization as in Example~\ref{exp:trapezoidal} guaranteeing the convergence of our method.

In Figure~\ref{fig:moon-gravityassist} we show a trajectory that starts from a geosynchronous orbit, uses a ``gravity assist manoeuvre'' from the Moon and parks at the $\mathrm{L}_5$ point of the Earth-Moon system, in a prescribed total time of 8 days. We used $N=100$, and the initial guess consisted of two consecutive straight lines, deliberately passing through a point farther from Earth than the Moon to get a trajectory with these features. We remark that since the four bodies are treated as point masses, it is not unusual to get trajectories that go through the surface of the Moon. In order to avoid this, one could for example add a suitable penalty function to the Lagrangian.

\begin{figure}
		$\vcenter{\hbox{\includegraphics[scale=.65]{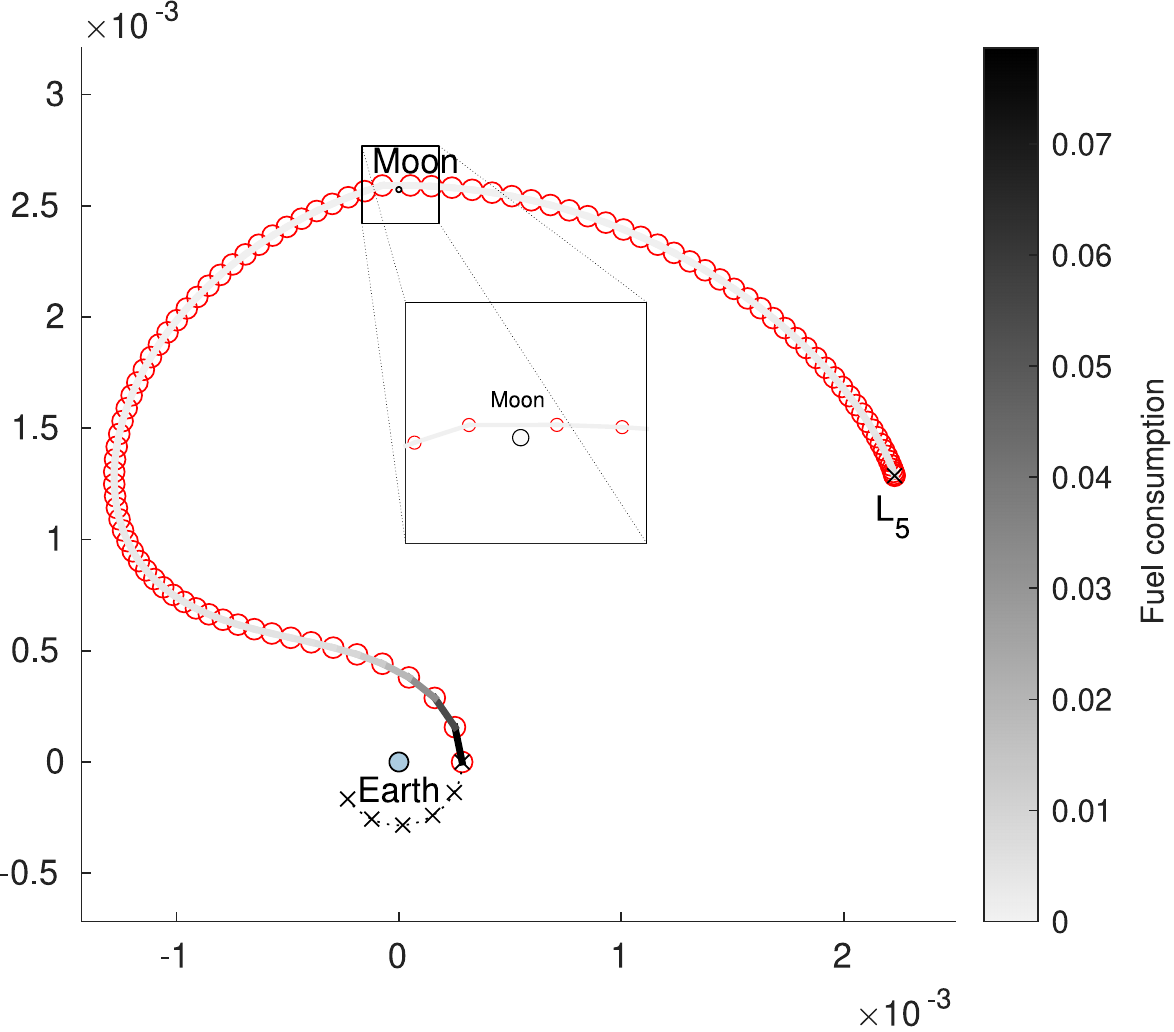}}}\hfill
		\vcenter{\hbox{\includegraphics[scale=.45]{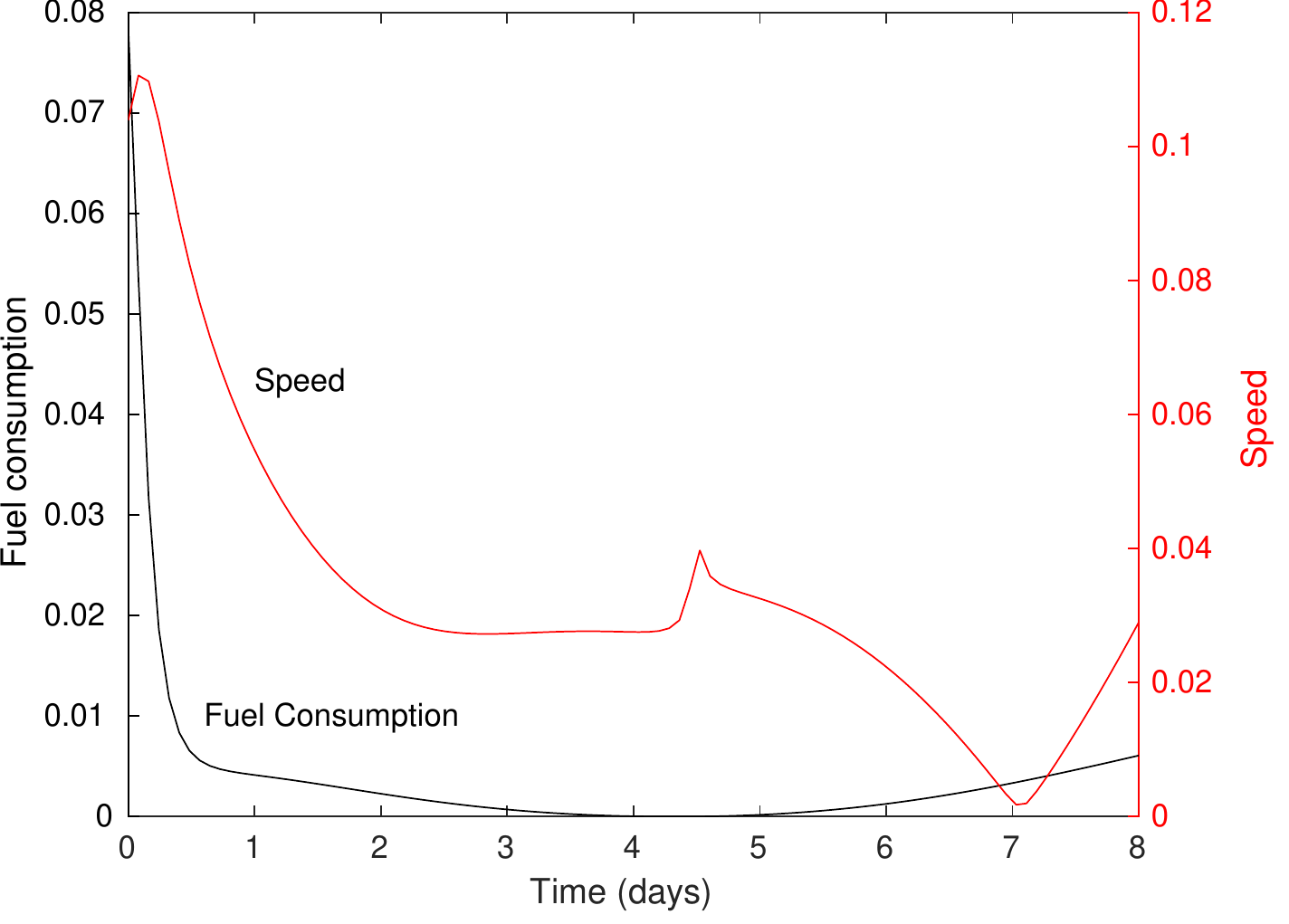}}}$
	\caption{Spacecraft trajectory starting from a geosynchronous orbit (in black, before starting the planned trajectory) and parking at the $\mathrm{L}_5$ Lagrange point of the Earth-Moon system (to scale). The trajectory is displayed in a rotating frame with the Moon fixed at the top of the diagram.}\label{fig:moon-gravityassist}
\end{figure}

\section{Conclusions and future work}
In this paper we have introduced an iterative numerical method admitting parallelization for discrete variational calculus proving the convergence of the method.  The applicability has been shown in some examples coming from navigation problems. 

Of course, these examples are only a small sample of application.  Our methods can be applied to problems in robotics and optimal control by incorporating real-time feedback, constraints, systems with external forces via the discrete Lagrange--d'Alembert principle \cite{MarsdenWest_ActaNum}, and trajectory correction, accounting for external perturbations or changes in the final endpoint conditions.  
We will study these generalizations in a future paper and, moreover, the extension of parallel methods adapted to invariant Lagrangian systems defined on a  Lie group. 

One variant we would like to discuss is the possibility of using an adaptive step size. If $q(t)$, $t\in[0,T]$, is a curve on $Q$, then we consider a time transformation $dt/d\tau=g(q^{[\gamma]}(t))$, where $g\colon T^{(\gamma)}Q \to \mathbb{R}$ is a smooth, positive function. This is called a \emph{Sundman transformation}  \cite[Ch.~9]{leimkuhler-reich}. In the discrete setting, each $q^{[\gamma-1]}_k$ is accompanied by a corresponding time $t_k$, satisfying
$\Delta t_k\approx\Delta \tau\, g_d(q^{[\gamma-1]}_{k-1},q^{[\gamma-1]}_k)$, along with $t_0=0$ and $t_N=T$. Here $g_d$ is a discretization of $g$ and $\Delta \tau$ is the fixed time step of the reparametrization of the solution curve (see also~\cite[Ch.~VIII.2]{hairer}). We propose that after each Jacobi step produces a sequence $\{q^{[\gamma-1]}_k\}$, we use these conditions, with approximation replaced by an equal sign, to compute all the $\Delta t_k$. This can be done in a very straightforward way. Then the times $t_k$ are updated to their new values.
We illustrate this in Figure~\ref{fig:moon-gravityassist-adaptive} for the adaptive time-step variation of the example used for Figure~\ref{fig:moon-gravityassist}. Note that the markers are $\tau$-equispaced. Comparing the two figures, the distribution of markers now emphasizes the detail of the trajectory near the Moon and Earth. Here we used a function $g$ depending on the position only.

\begin{figure}
		\includegraphics[scale=.6]{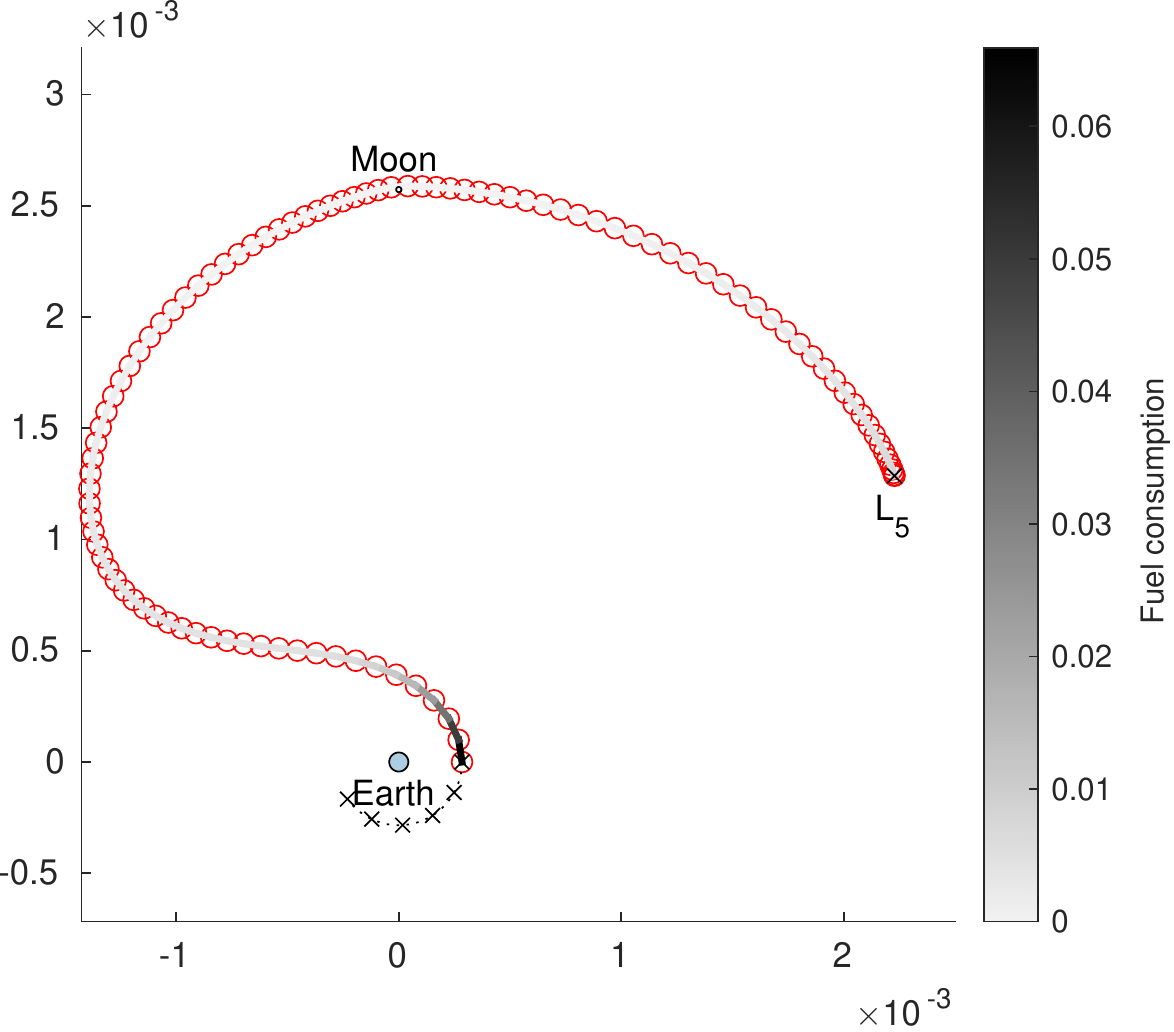}\hfill
		\includegraphics[scale=.6]{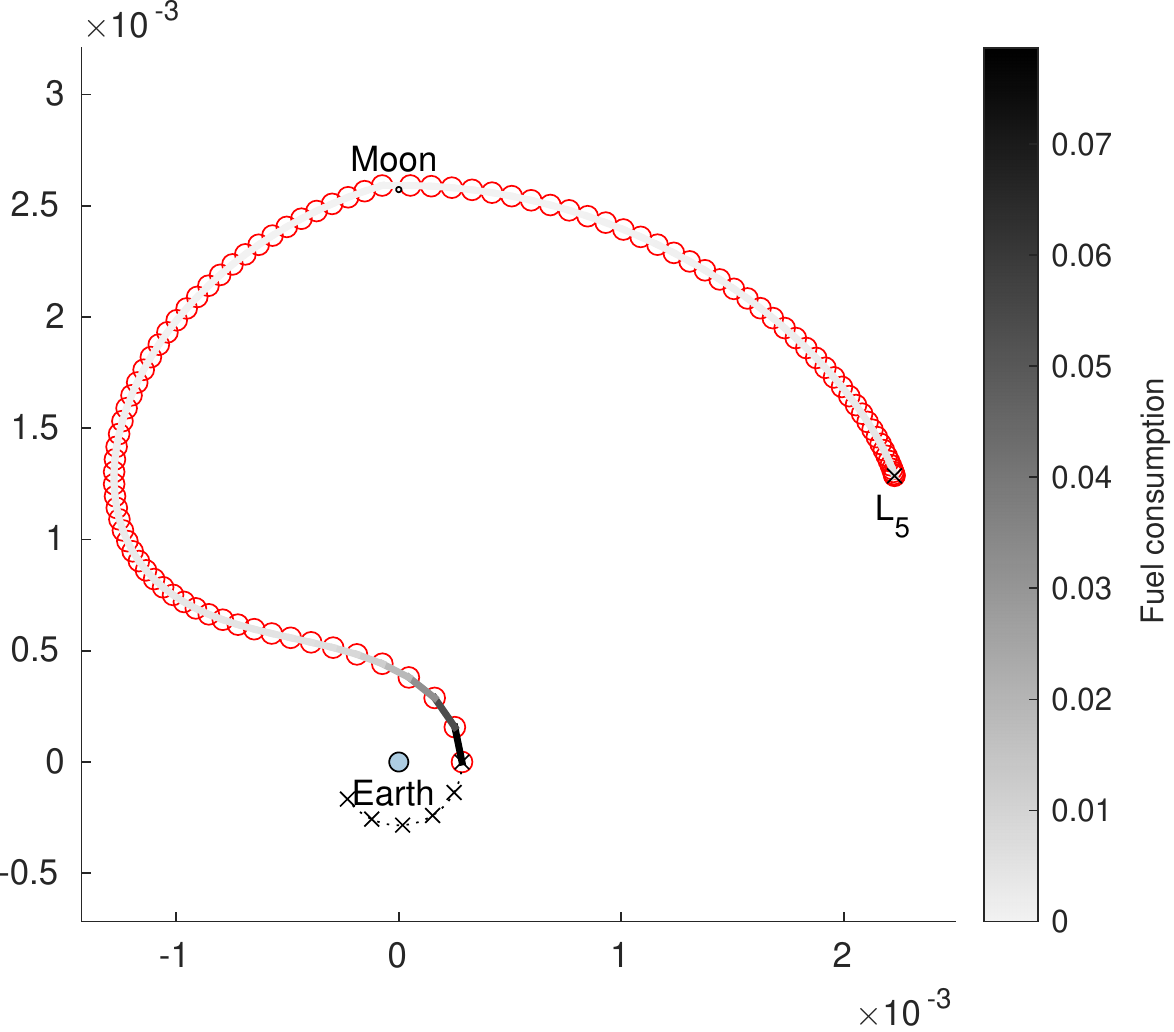}
	\caption{Left: adaptive step size. Right: fixed step size.}\label{fig:moon-gravityassist-adaptive}
\end{figure}


 \appendix
\section{Appendix}\label{appendixA}

Let us first define some matrices that we found useful for describing the structure of the Hessian matrices of the discrete $\gamma$-th order Lagrangians regarding powers of $h$. As such Lagrangians depend on $q^{[\gamma]}=(q^{(0)\,i},\dots,q^{(\gamma-1)\,i})$, we are indexing the matrix rows and columns from $0$ to $\gamma-1$.

\begin{itemize}
\item The upper-triangular matrix $\mathbf{A}(h) \in M_{\gamma}(\mathbb{R})$, with non-zero entries
\begin{equation*}
a_{\alpha \beta}(h) = \frac{h^{\beta - \alpha}}{(\beta - \alpha)!}\,, \quad \text{for } 0 \leq \alpha \leq \beta \leq \gamma-1\,.
\end{equation*}
\item The symmetric matrices $\mathbf{B}(h), \mathbf{C}(h) \in M_{\gamma}(\mathbb{R})$, with entries
\begin{alignat*}{2}
b_{\alpha \beta}(h) &= \frac{h^{2 \gamma - \alpha - \beta - 1}}{(2 \gamma - \alpha - \beta - 1)!}\,, \quad && \text{for } 0 \leq \alpha, \beta \leq \gamma-1\,;\\
c_{\alpha \beta}(h) &= \frac{h^{2 \gamma - \alpha - \beta - 1}}{(2 \gamma - \alpha - \beta - 1) (\gamma - \alpha - 1)! (\gamma - \beta - 1)!}\,. \quad && 
\end{alignat*}
\item The diagonal matrix $\mathbf{D} \in M_{\gamma}(\mathbb{R})$, with entries
\begin{equation*}
d_{\alpha\, \beta} = (-1)^{\gamma - \alpha - 1}\delta_{\alpha,\, \beta}\,,\quad \text{for } 0 \leq \alpha, \beta \leq \gamma-1\,.
\end{equation*}
\item The exchange matrix $\mathbf{E} \in M_{\gamma}(\mathbb{R})$, with entries
\begin{equation*}
e_{\alpha\, \beta} = \delta_{\gamma - \alpha - 1,\beta}\,,\quad \text{for } 0 \leq \alpha, \beta \leq \gamma-1\,.
\end{equation*}
\end{itemize}

For example, for $\gamma=3$ these matrices are
\begingroup
\renewcommand*{\arraystretch}{1.4}
\begin{align*}
\mathbf{A}(h)&=\begin{pmatrix}
  1&h&\frac{h^2}{2}\\
  0&1&h\\
  0&0&1
\end{pmatrix},&
\mathbf{B}(h)&=\begin{pmatrix}
  \frac{h^5}{120}&\frac{h^4}{24}&\frac{h^3}{6}\\
  \frac{h^4}{24}&\frac{h^3}{6}&\frac{h^2}{2}\\
  \frac{h^3}{6}&\frac{h^2}{2}&h
\end{pmatrix},&
\mathbf{C}(h)&=\begin{pmatrix}
  \frac{h^5}{20}&\frac{h^4}{8}&\frac{h^3}{6}\\
  \frac{h^4}{8}&\frac{h^3}{3}&\frac{h^2}{2}\\
  \frac{h^3}{6}&\frac{h^2}{2}&h
\end{pmatrix},\\
\mathbf{D}&=\begin{pmatrix}
  1&0&0\\ 0&-1&0\\ 0&0&1
\end{pmatrix},&
\mathbf{E}&=\begin{pmatrix}
  0&0&1\\ 0&1&0\\ 1&0&0
\end{pmatrix}.&
\end{align*}
\endgroup

\begin{lemma}
\label{cor:AB_matrix} The following identities hold:
\begin{enumerate}
\item $\mathbf{A}(-h) = \mathbf{D} \mathbf{A}(h) \mathbf{D} = \mathbf{A}^{-1}(h)$,
\item $\mathbf{B}(-h) = - \mathbf{D} \mathbf{B}(h) \mathbf{D}$.
\end{enumerate}
\end{lemma}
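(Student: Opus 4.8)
The plan is to verify each identity entrywise, using the explicit closed-form expressions for the entries of $\mathbf{A}$, $\mathbf{B}$ and $\mathbf{D}$ given in the Appendix, since all three matrices are presented in closed form and the asserted relations are purely algebraic identities among their entries.

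First I would dispose of the two conjugation statements $\mathbf{A}(-h)=\mathbf{D}\mathbf{A}(h)\mathbf{D}$ and $\mathbf{B}(-h)=-\mathbf{D}\mathbf{B}(h)\mathbf{D}$ simultaneously, as both reduce to a sign computation. Because $\mathbf{D}$ is diagonal with $d_{\alpha\alpha}=(-1)^{\gamma-\alpha-1}$, conjugation multiplies the $(\alpha,\beta)$ entry of any matrix by $(-1)^{\gamma-\alpha-1}(-1)^{\gamma-\beta-1}=(-1)^{2\gamma-\alpha-\beta-2}=(-1)^{\alpha+\beta}$, using that $2\gamma-2$ is even. Replacing $h$ by $-h$, on the other hand, multiplies $a_{\alpha\beta}(h)$ by $(-1)^{\beta-\alpha}$ and $b_{\alpha\beta}(h)$ by $(-1)^{2\gamma-\alpha-\beta-1}$. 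For $\mathbf{A}$ the required comparison is $(-1)^{\alpha+\beta}=(-1)^{\beta-\alpha}$, valid because the exponents differ by the even number $2\alpha$; for $\mathbf{B}$ one has $(-1)^{2\gamma-\alpha-\beta-1}=-(-1)^{\alpha+\beta}$, which supplies the extra minus sign. The zero pattern is preserved throughout since $\mathbf{D}$ is diagonal, so both conjugation identities follow.

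It then remains to prove $\mathbf{A}(-h)=\mathbf{A}^{-1}(h)$, which I would obtain from the semigroup property $\mathbf{A}(s)\mathbf{A}(t)=\mathbf{A}(s+t)$. Computing the $(\alpha,\delta)$ entry of the product and reindexing via $k=\beta-\alpha$ and $m=\delta-\alpha$ gives
\[
\sum_{k=0}^{m}\frac{s^{k}}{k!}\,\frac{t^{m-k}}{(m-k)!}=\frac{1}{m!}\sum_{k=0}^{m}\binom{m}{k}s^{k}t^{m-k}=\frac{(s+t)^{m}}{m!},
\]
which is precisely $a_{\alpha\delta}(s+t)$ by the binomial theorem (the empty-sum case $\alpha>\delta$ gives $0$ on both sides, consistent with upper-triangularity). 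Since $\mathbf{A}(0)=I$, setting $t=-s=-h$ yields $\mathbf{A}(h)\mathbf{A}(-h)=I$, hence $\mathbf{A}(-h)=\mathbf{A}^{-1}(h)$.

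Since every step is an elementary index manipulation, I do not expect a genuine obstacle; the only point demanding minor care is the parity bookkeeping in the conjugation step, where one must correctly reconcile the factor $(-1)^{\alpha+\beta}$ produced by $\mathbf{D}$ with the sign produced by $h\mapsto-h$, keeping track of the extra $-1$ that distinguishes the $\mathbf{B}$ case from the $\mathbf{A}$ case.
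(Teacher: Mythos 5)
Your proposal is correct and follows essentially the same route as the paper: entrywise sign bookkeeping for the two conjugation identities (the paper likewise reduces $\mathbf{D}\mathbf{A}(h)\mathbf{D}$ and $\mathbf{D}\mathbf{B}(h)\mathbf{D}$ to the parity of $2\gamma-\alpha-\beta-2$ versus that of $\beta-\alpha$ and $2\gamma-\alpha-\beta-1$), and a binomial-theorem evaluation of the convolution sum for the inverse. Your only deviation is proving the full semigroup law $\mathbf{A}(s)\mathbf{A}(t)=\mathbf{A}(s+t)$ and specializing to $t=-s$, whereas the paper directly evaluates the alternating sum $\sum_k\binom{m}{k}(-1)^k=0^m$; this is the same computation in marginally greater generality.
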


\begin{proof}
\begin{itemize}
\item It is not difficult to see that
\begin{equation*}
\sum_{\rho = 0}^{\gamma-1} \sum_{\epsilon = 0}^{\gamma-1} d_{\alpha \rho} a_{\rho \epsilon}(h) d_{\epsilon \beta} = (-1)^{2 \gamma - \alpha - \beta - 2} \frac{h^{\beta - \alpha}}{(\beta - \alpha)!} = \frac{(-h)^{\beta - \alpha}}{(\beta - \alpha)!} = a_{\alpha \beta}(-h)\,.
\end{equation*}
In order to show the remaining identity one only needs to check that due to upper-triangular nature of $\mathbf{A}(h)$,
\begin{equation*}
\sum_{\epsilon = 0}^{\gamma-1} a_{\alpha \epsilon}(-h) a_{\epsilon \beta}(h) = \sum_{\epsilon = \alpha}^{\beta} a_{\alpha \epsilon}(-h) a_{\epsilon \beta}(h) = h^{\beta - \alpha} \sum_{\epsilon = \alpha}^{\beta} \frac{(-1)^{\epsilon - \alpha}}{(\epsilon - \alpha)!(\beta - \epsilon)!}\,.
\end{equation*}
To simplify this we relabel $k = \epsilon - \alpha$, $m = \beta - \alpha$ and apply the binomial theorem, finally obtaining
\begin{equation*}
\sum_{k = 0}^{m} \frac{(-1)^{k}}{k! \, (m - k)!} = \frac{1}{m!} \sum_{k = 0}^{m} \binom{m}{k} (-1)^k = \begin{cases}
  1&\text{if }m=0,\\
  0^m&\text{otherwise},
\end{cases}
\end{equation*}
which proves our claim.
\item Direct multiplication leads to the desired result
\begin{align*}
\sum_{\epsilon = 0}^{\gamma - 1} \sum_{\lambda = 0}^{\gamma - 1} d_{\alpha \epsilon} b_{\epsilon \lambda}(h) d_{\lambda \beta} &= (-1)^{\gamma - \alpha - 1} (-1)^{\gamma - \lambda - 1} \frac{h^{2 \gamma - \epsilon - \lambda - 1}}{(2 \gamma - \epsilon - \lambda - 1)!} \delta_{\alpha,\, \epsilon} \delta_{\lambda,\, \beta}\\
&= (-1)^{2 \gamma - \alpha - \beta - 2} \frac{h^{2 \gamma - \alpha - \beta - 1}}{(2 \gamma - \alpha - \beta - 1)!} = - b(-h)\,.\qedhere
\end{align*}
\end{itemize}
\end{proof}

\begin{lemma}
\label{lem:C_matrix}
The matrix $\mathbf{C}(h)$
\begin{enumerate}
\item satisfies $\mathbf{C}(h) = \mathbf{A}(h) \mathbf{D} \mathbf{B}(h)$;
\item \label{lem:C_matrix.itm:C_symm} satisfies $\mathbf{C}(h) = -\mathbf{D} \mathbf{C}(-h) \mathbf{D}$;
\item \label{lem:C_matrix.itm:decomp} admits a decomposition $\mathbf{C}(h) = \mathbf{L}(h)\mathbf{U}(h)$, where $\mathbf{L}(h), \mathbf{U}(h) \in M_{\gamma}(\mathbb{R})$ are respectively lower and upper triangular matrices with entries
\begin{alignat*}{2}
l_{\alpha \beta} &= \frac{h^{\beta - \alpha} \alpha! \, (2 \gamma - \beta - 1)! \, (\gamma - \beta - 1)! \, (2 \gamma - \alpha - \beta - 2)!}{\beta!\, (\alpha - \beta)!\, (2 \gamma - \alpha - 1)! \, (\gamma - \alpha - 1)! \, (2 \gamma - 2 \beta - 2)!}, \quad &\text{for } 0 \leq \beta \leq \alpha \leq \gamma-1\,;\\
u_{\alpha \beta} &= \frac{h^{2 \gamma - \alpha - \beta - 1} \, \alpha! \, \beta! \, (2 \gamma - 2 \alpha - 1)! \,(2 \gamma - \alpha - \beta - 2)!}{(\beta - \alpha)! \, (2 \gamma - \alpha - 1)! \, (2 \gamma - \beta - 1)! \, (\gamma - \alpha - 1)! \, (\gamma - \beta - 1)!}, \quad &\text{for } 0 \leq \alpha \leq \beta \leq \gamma-1\,;
\end{alignat*}
\item is regular and positive-definite, with determinant
\begin{equation*}
\det \mathbf{C}(h) = h^{\gamma^2} \prod_{\alpha = 0}^{\gamma - 1} \frac{\alpha!}{(\gamma + \alpha)!}\,.
\end{equation*}
\end{enumerate}
\end{lemma}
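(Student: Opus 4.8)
The plan is to prove the four assertions in order, letting each feed the next. For the identity $\mathbf{C}(h)=\mathbf{A}(h)\mathbf{D}\mathbf{B}(h)$ of part~(1) I would compute the $(\alpha,\beta)$ entry directly. Since $\mathbf{D}$ is diagonal and $\mathbf{A}(h)$ is upper triangular, the double sum collapses to $\sum_{\rho=\alpha}^{\gamma-1}a_{\alpha\rho}(h)\,(-1)^{\gamma-\rho-1}b_{\rho\beta}(h)$; pulling out $h^{2\gamma-\alpha-\beta-1}$ and setting $k=\rho-\alpha$, $p=\gamma-\alpha-1$, $q=\gamma-\beta-1$ turns the claim into the elementary finite-difference identity
\[
\sum_{k=0}^{p}(-1)^{p-k}\binom{p+q+1}{k}=\binom{p+q}{p},
\]
which follows by a one-line telescoping of Pascal's rule. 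Comparing the result with $c_{\alpha\beta}(h)$ closes part~(1).

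For part~(2) no new computation is needed: applying part~(1) at $-h$ and then the identities $\mathbf{A}(-h)=\mathbf{D}\mathbf{A}(h)\mathbf{D}$ and $\mathbf{B}(-h)=-\mathbf{D}\mathbf{B}(h)\mathbf{D}$ of Lemma~\ref{cor:AB_matrix}, together with $\mathbf{D}^2=I$, gives
\[
-\mathbf{D}\,\mathbf{C}(-h)\,\mathbf{D}=-\mathbf{D}\,\mathbf{A}(-h)\,\mathbf{D}\,\mathbf{B}(-h)\,\mathbf{D}=\mathbf{A}(h)\,\mathbf{D}\,\mathbf{B}(h)=\mathbf{C}(h).
\]

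Part~(3) I would verify by direct multiplication. Because $\mathbf{L}(h)$ is lower and $\mathbf{U}(h)$ upper triangular, $(\mathbf{L}\mathbf{U})_{\alpha\beta}=\sum_{\rho=0}^{\min(\alpha,\beta)}l_{\alpha\rho}u_{\rho\beta}$; once the common power $h^{2\gamma-\alpha-\beta-1}$ and the factorial prefactors are separated, the residual sum is a Chu--Vandermonde-type convolution of binomial coefficients that must be shown to equal the prefactor of $c_{\alpha\beta}(h)$. This factorial bookkeeping is the most laborious step, and I expect it to be the main obstacle. Since the LU factors with unit lower triangular $\mathbf{L}$ are unique once they exist, one may equivalently obtain them by Gaussian elimination applied to the explicit product $\mathbf{A}(h)\mathbf{D}\mathbf{B}(h)$ of part~(1).

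Finally, part~(4) separates into the determinant and the definiteness. The determinant is immediate from part~(3): as $\mathbf{L}(h)$ is unit lower triangular ($l_{\alpha\alpha}=1$), $\det\mathbf{C}(h)=\prod_{\alpha=0}^{\gamma-1}u_{\alpha\alpha}$, where $\sum_{\alpha=0}^{\gamma-1}(2\gamma-2\alpha-1)=\gamma^2$ accounts for the power of $h$ and the surviving factorials telescope to $\prod_{\alpha}\alpha!/(\gamma+\alpha)!$; regularity for $h\neq 0$ follows at once. For positive-definiteness (when $h>0$) I would avoid any spectral estimate by exhibiting $\mathbf{C}(h)$ as a Gram matrix: writing $\frac{1}{2\gamma-\alpha-\beta-1}=\int_0^1 t^{2\gamma-\alpha-\beta-2}\,dt$ and splitting the exponent as $(\gamma-\alpha-1)+(\gamma-\beta-1)$ gives
\[
c_{\alpha\beta}(h)=h\int_0^1 \psi_\alpha(t)\,\psi_\beta(t)\,dt,\qquad \psi_\alpha(t)=\frac{(ht)^{\gamma-\alpha-1}}{(\gamma-\alpha-1)!},
\]
so that $\mathbf{C}(h)/h$ is the Gram matrix of the linearly independent monomials $\psi_0,\dots,\psi_{\gamma-1}$ and is therefore positive-definite; multiplying by $h>0$ preserves this. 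Equivalently, the pivots $u_{\alpha\alpha}$ are positive for $h>0$, and symmetry plus uniqueness of the $\mathbf{L}\mathbf{D}\mathbf{L}^\top$ factorization yield the same conclusion.
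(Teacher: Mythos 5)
Your parts (1) and (2) are correct and essentially identical to the paper's proof: the same entrywise computation reducing to the partial alternating sum of binomial coefficients (the paper telescopes Pascal's rule to obtain $\sum_{c=0}^{a}\binom{b}{c}(-1)^c=(-1)^a\binom{b-1}{a}$, which is exactly your identity), and the same algebraic manipulation using $\mathbf{A}(-h)=\mathbf{D}\mathbf{A}(h)\mathbf{D}$ and $\mathbf{B}(-h)=-\mathbf{D}\mathbf{B}(h)\mathbf{D}$. Your Gram-matrix argument for positive-definiteness is a genuinely different and complete route: the paper instead extracts $\mathbf{C}(h)=\mathbf{L}(h)\widehat{\mathbf{D}}(h)\mathbf{L}^{\top}(h)$ from the LU decomposition and invokes Sylvester's law of inertia, so it needs part (3) first, whereas your representation $c_{\alpha\beta}(h)=h\int_0^1\psi_\alpha(t)\psi_\beta(t)\,dt$ with the linearly independent monomials $\psi_\alpha$ proves definiteness independently of the factorization and is arguably cleaner.

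The genuine gap is part (3). You correctly identify that $(\mathbf{L}\mathbf{U})_{\alpha\beta}=c_{\alpha\beta}$ reduces to a combinatorial identity, but you do not prove it, and it is not a Chu--Vandermonde convolution: after stripping the powers of $h$ and the factorial prefactors one must show
\begin{equation*}
\sum_{\delta=0}^{\min(\alpha,\beta)}\frac{(2\gamma-2\delta-1)\,(2\gamma-\alpha-\delta-2)!\,(2\gamma-\beta-\delta-2)!}{(\alpha-\delta)!\,(\beta-\delta)!}=\frac{(2\gamma-\alpha-1)!\,(2\gamma-\beta-1)!}{(2\gamma-\alpha-\beta-1)\,\alpha!\,\beta!}\,,
\end{equation*}
whose summand carries the factor $2\gamma-2\delta-1$, linear in the summation index; the paper proves this by Zeilberger's creative telescoping, exhibiting an explicit certificate $G(\alpha,\delta)$ with $F(\alpha,\delta)=G(\alpha,\delta+1)-G(\alpha,\delta)$ so that the sum telescopes. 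Your fallback --- Gaussian elimination applied to $\mathbf{A}(h)\mathbf{D}\mathbf{B}(h)$ --- does not avoid this: producing the stated closed forms for $l_{\alpha\beta}$ and $u_{\alpha\beta}$ requires equivalent combinatorial work. Since your determinant formula and the regularity claim in part (4) are read off from $\prod_\alpha u_{\alpha\alpha}$, they inherit this gap; note, however, that your Gram representation exhibits $\bigl(\tfrac{1}{2\gamma-\alpha-\beta-1}\bigr)$ as a Cauchy matrix, so the determinant could alternatively be obtained from the Cauchy determinant formula, which would make part (4) entirely independent of part (3).
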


Before proving this lemma, we write down the matrices $\mathbf{L}(h)$ and $\mathbf{U}(h)$ for the case $\gamma=3$ as above:
\begingroup
\renewcommand*{\arraystretch}{1.4}
\begin{equation*}
\mathbf{L}(h)=\begin{pmatrix}
  1&0&0\\
  \frac{5}{2h}&1&0\\
  \frac{10}{3h^2}&\frac{4}{h}&1
\end{pmatrix},\qquad
\mathbf{U}(h)=\begin{pmatrix}
  \frac{h^5}{20}&\frac{h^4}{8}&\frac{h^3}{6}\\
  0&\frac{h^3}{48}&\frac{h^2}{12}\\
  0&0&\frac{h}{9}
\end{pmatrix}.
\end{equation*}
\endgroup

\begin{proof}
\begin{enumerate}
\item Performing the multiplication explicitly we obtain
\begin{equation*}
\sum_{\epsilon = \alpha}^{\gamma-1} \sum_{\sigma = 0}^{\gamma - 1} a_{\alpha \epsilon}(h) d_{\epsilon \sigma} b_{\sigma \beta}(h) = h^{2\gamma-\alpha-\beta-1} \sum_{\epsilon = \alpha}^{\gamma - 1} \frac{(-1)^{\gamma - \epsilon - 1}}{(\epsilon - \alpha)!(2 \gamma - \epsilon - \beta - 1)!}\,.
\end{equation*}
Setting $a = \gamma - \alpha - 1$, $b = 2 \gamma - \alpha - \beta - 1$ ($b > a$) and $c = \epsilon - \alpha$, we may transform this expression into
\begin{equation}
\label{eq:C_product_identity}
(-1)^{\gamma-\alpha-1} \frac{h^{2 \gamma - \alpha - \beta - 1}}{(2\gamma - \alpha - \beta - 1)!} \sum_{c = 0}^{a} \binom{b}{c} (-1)^c\,.
\end{equation}
Now, using Pascal's identity we obtain the telescopic sum
\begin{eqnarray*}
  \sum_{c = 0}^{a} \binom{b}{c} (-1)^c&=&\sum_{c = 0}^{a} \left( \binom{b-1}{c-1}+\binom{b-1}{c}\right) (-1)^c\\
  &=&\binom{b-1}{-1}+\binom{b - 1}{a} (-1)^{a}=\binom{b - 1}{a} (-1)^{a}\,.
\end{eqnarray*}
Here we have used $\binom{b-1}{-1}=0$, a convention that can be justified by adding zeros outside the rows of Pascal's triangle.

Substituting this in~\eqref{eq:C_product_identity} and writing everything in terms of $\alpha$, $\beta$ and $\gamma$, we obtain
\begin{equation*}
 \frac{h^{2 \gamma - \alpha - \beta - 1}}{(2\gamma - \alpha - \beta - 1)!} \binom{2 \gamma - \alpha - \beta - 2}{ \gamma - \alpha - 1}=c_{\alpha\beta}(h).
\end{equation*}

\item Using Lemma~\ref{cor:AB_matrix} together with the former identity and the fact that $\mathbf{D}^{-1} = \mathbf{D}$ leads us to
\begin{equation*}
\mathbf{D} \mathbf{C}(h) \mathbf{D} = \mathbf{D} \mathbf{A}(h) \mathbf{D} \mathbf{B}(h) \mathbf{D} = \mathbf{A}(-h) \mathbf{B}(h) \mathbf{D} = - \mathbf{A}(-h) \mathbf{D} \mathbf{B}(-h) = -\mathbf{C}(-h)\,.
\end{equation*}

\item To prove this, it suffices to show that
\begin{equation*}
\sum_{\delta = 0}^{\gamma} l_{\alpha \delta}(h) \, u_{\delta \beta}(h) = \sum_{\delta = 0}^{\min(\alpha,\beta)} l_{\alpha \delta}(h) \, u_{\delta \beta}(h) = c_{\alpha \beta}(h)\,,
\end{equation*}
where we have made use of the triangular structure of $\mathbf{L}(h)$ and $\mathbf{U}(h)$. Substituting the corresponding expressions we get
\begin{align*}
\sum_{\delta = 0}^{\min(\alpha,\beta)} l_{\alpha \delta}(h) \, u_{\delta \beta}(h) &= \frac{h^{2 \gamma - \alpha - \beta - 1} \alpha! \, \beta!}{(2\gamma - \alpha - 1)! (2\gamma - \beta - 1)! (\gamma - \alpha - 1)! (\gamma - \beta - 1)!}\\
&\times \sum_{\delta = 0}^{\min(\alpha,\beta)} \frac{(2 \gamma - 2\delta - 1) (2 \gamma - \alpha - \delta - 2)! (2 \gamma - \beta - \delta - 2)!}{(\alpha - \delta)!(\beta - \delta)!}\,.
\end{align*}
For this to be equal to $c_{\alpha\beta}(h)$, we need to show that
\begin{equation*}
\sum_{\delta = 0}^{\min(\alpha,\beta)} \frac{(2 \gamma - 2\delta - 1) (2 \gamma - \alpha - \delta - 2)! (2 \gamma - \beta - \delta - 2)!}{(\alpha - \delta)!(\beta - \delta)!} = \frac{(2 \gamma - \alpha - 1)! (2 \gamma - \beta - 1)!}{(2 \gamma - \alpha - \beta - 1) \alpha! \, \beta!}\,.
\end{equation*}
In order to prove this, we can apply Zeilberger's algorithm, also known as \emph{creative telescoping} \cite{Zeilberger}. First, notice that the expression is symmetric in $\alpha$ and $\beta$, so we may choose $\min(\alpha,\beta) = \alpha$ without loss of generality. Denoting the summand of the former equation as $F(\alpha,\delta)$, the algorithm gives us
\begin{equation*}
G(\alpha,\delta) = -\frac{(2 \gamma - \alpha - \delta - 1)! (2 \gamma - \beta - \delta - 1)!}{(2 \gamma - \alpha - \beta - 1) (\alpha - \delta)! (\beta - \delta)!}
\end{equation*}
which satisfies that $F(\alpha, \delta) = G(\alpha, \delta + 1) - G(\alpha, \delta)$. This transforms our sum into a telescopic series,
\begin{align*}
\sum_{\delta = 0}^{\alpha} F(\alpha, \delta) &= \sum_{\delta = 0}^{\alpha} [ G(\alpha, \delta + 1) - G(\alpha, \delta) ]\\
&= G(\alpha, \alpha + 1) - G(\alpha, 0) = 0 + \frac{(2 \gamma - \alpha - 1)! (2 \gamma - \beta - 1)!}{(2 \gamma - \alpha - \beta - 1) \alpha! \, \beta!}\,.
\end{align*}
When evaluating $G(\alpha, \alpha + 1)$ we have used the convention $(-1)!=\infty$, which is customary in combinatorics. Our decomposition is thus proved.
\item From the definition of $\mathbf{L}(h)$ in item \ref{lem:C_matrix.itm:decomp}, it is easy to check that $l_{\alpha \alpha}(h) = 1$ for $\alpha = 0,\dots,\gamma-1$. The decomposition is therefore the standard $LU$ decomposition of the matrix $\mathbf{C}(h)$. Thus, all the information that we need to determine regularity, definiteness and the determinant itself is encoded in $\mathbf{U}(h)$.

It is also immediate to check that for $h \neq 0$
\begin{equation*}
u_{\alpha \alpha}(h) = \frac{h^{2 \gamma - 2\alpha - 1} \, \alpha!^2 \,(2 \gamma - 2\alpha - 2)! \,(2 \gamma - 2\alpha - 1)! }{(\gamma - \alpha - 1)!^2 \, (2 \gamma - \alpha - 1)!^2 } \neq 0\,
\end{equation*}
which proves regularity of $\mathbf{C}(h)$. Moreover, we may further decompose $\mathbf{U}(h)$ uniquely as $\widehat{\mathbf{D}}(h) \widehat{\mathbf{U}}(h)$, where the former is the diagonal matrix formed by the diagonal entries of $\mathbf{U}(h)$ and the latter has ones on its diagonal. Due to the symmetry of $\mathbf{C}(h)$, one has that $(\mathbf{L}(h) \widehat{\mathbf{D}}(h) \widehat{\mathbf{U}}(h))^{\top} = \widehat{\mathbf{U}}^{\top}(h) \widehat{\mathbf{D}}(h) \mathbf{L}^{\top}(h) = \mathbf{L}(h) \widehat{\mathbf{D}}(h) \widehat{\mathbf{U}}(h)$. This gives two $LU$ decompositions of $\mathbf{C}(h)$ with the lower triangular factors having ones on their diagonals, and from the uniqueness of such a decomposition we obtain that $\widehat{\mathbf{U}}(h) = \mathbf{L}^{\top}(h)$. Thus $\mathbf{C}(h) = \mathbf{L}(h) \widehat{\mathbf{D}}(h) \mathbf{L}^{\top}(h)$ and by Sylvester's law of inertia, $\mathbf{C}(h)$ must be positive-definite for all $h > 0$ and negative definite for all $h < 0$. Finally, we can compute the determinant of $\mathbf{C}(h)$ by computing the product
\begin{align*}
\det \mathbf{C}(h)&=\det \mathbf{U}(h)=\prod_{\alpha = 0}^{\gamma - 1} u_{\alpha \alpha}(h) = \prod_{\alpha = 0}^{\gamma - 1} \frac{h^{2 \gamma - 2\alpha - 1} \, \alpha!^2 \,(2 \gamma - 2\alpha - 2)! \,(2 \gamma - 2\alpha - 1)! }{(\gamma - \alpha - 1)!^2 \, (2 \gamma - \alpha - 1)!^2 }\\
&= h^{\sum_{\alpha = 0}^{\gamma - 1} \left[ 2 \gamma - 2 \alpha - 1\right]} \prod_{\alpha = 0}^{\gamma - 1} \frac{ (2 \gamma - 2\alpha - 2)! \,(2 \gamma - 2\alpha - 1)!}{(2 \gamma - \alpha - 1)!^2}\\
&= h^{\gamma^2} \prod_{\lambda = 0}^{\gamma - 1} \frac{ (2 \lambda + 1)! \,(2 \lambda)!}{(\gamma + \lambda)!^2} = h^{\gamma^2} \prod_{\lambda = 0}^{\gamma - 1} \frac{\lambda!}{(\gamma + \lambda)!}
\end{align*}
On the second line we have made use of the fact that $\prod_{\alpha = 0}^{\gamma - 1} \alpha! = \prod_{\alpha = 0}^{\gamma - 1} (\gamma - \alpha - 1)!$. On the third we  have first used the arithmetic series formula and performed the index relabelling $\lambda = \gamma - \alpha - 1$, and finally we have used that $\prod_{\lambda = 0}^{\gamma - 1} (2 \lambda + 1)! (2 \lambda)! = \prod_{\lambda = 0}^{2 \gamma - 1} \lambda !$ and $\prod_{\lambda = 0}^{\gamma - 1} (\gamma + \lambda)! = \prod_{\lambda = \gamma}^{2 \gamma - 1} \lambda !$.\qedhere
\end{enumerate}
\end{proof}

\begin{corollary}
\label{cor:B_matrix}
The matrix $\mathbf{B}(h)$ is regular with determinant
\begin{equation*}
\det \mathbf{B}(h) = (-1)^{\gamma (\gamma - 1)/2} \det \mathbf{C}(h)
\end{equation*}
\end{corollary}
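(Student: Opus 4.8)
The plan is to deduce everything from the factorization already established in Lemma~\ref{lem:C_matrix}.\ref{lem:C_matrix.itm:decomp}—or rather from item~1 of that lemma, which states $\mathbf{C}(h) = \mathbf{A}(h)\mathbf{D}\mathbf{B}(h)$. Since the determinant is multiplicative, this immediately gives
\begin{equation*}
\det \mathbf{C}(h) = \det \mathbf{A}(h)\,\det \mathbf{D}\,\det \mathbf{B}(h),
\end{equation*}
so the whole statement reduces to evaluating the two easy factors $\det \mathbf{A}(h)$ and $\det \mathbf{D}$ and then solving for $\det \mathbf{B}(h)$.

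First I would note that $\mathbf{A}(h)$ is upper-triangular with diagonal entries $a_{\alpha\alpha}(h) = h^0/0! = 1$, so $\det \mathbf{A}(h) = 1$. Next, $\mathbf{D}$ is diagonal with entries $d_{\alpha\alpha} = (-1)^{\gamma-\alpha-1}$, whence
\begin{equation*}
\det \mathbf{D} = \prod_{\alpha=0}^{\gamma-1}(-1)^{\gamma-\alpha-1} = (-1)^{\sum_{\alpha=0}^{\gamma-1}(\gamma-\alpha-1)} = (-1)^{\gamma(\gamma-1)/2},
\end{equation*}
where the exponent is computed by the substitution $k=\gamma-\alpha-1$, giving $\sum_{k=0}^{\gamma-1}k = \gamma(\gamma-1)/2$.

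Substituting these two values yields $\det \mathbf{C}(h) = (-1)^{\gamma(\gamma-1)/2}\det \mathbf{B}(h)$, and since $(-1)^{\gamma(\gamma-1)/2}=\pm 1$ is its own inverse, this is equivalent to the claimed identity $\det \mathbf{B}(h) = (-1)^{\gamma(\gamma-1)/2}\det \mathbf{C}(h)$. Regularity of $\mathbf{B}(h)$ then follows at once, because Lemma~\ref{lem:C_matrix} (item~4) gives $\det \mathbf{C}(h)\neq 0$ for $h\neq 0$, and hence $\det \mathbf{B}(h)\neq 0$ as well.

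There is essentially no serious obstacle here: the corollary is a direct bookkeeping consequence of the factorization $\mathbf{C}(h)=\mathbf{A}(h)\mathbf{D}\mathbf{B}(h)$. The only point requiring any care is the sign computation for $\det \mathbf{D}$, which is the source of the factor $(-1)^{\gamma(\gamma-1)/2}$; everything else is immediate from multiplicativity of the determinant and the already-proven regularity of $\mathbf{C}(h)$.
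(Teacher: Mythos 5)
Your proposal is correct and follows essentially the same route as the paper: both deduce the identity from the factorization $\mathbf{C}(h)=\mathbf{A}(h)\mathbf{D}\mathbf{B}(h)$, evaluate $\det\mathbf{A}(h)=1$ and $\det\mathbf{D}=(-1)^{\gamma(\gamma-1)/2}$, and obtain regularity of $\mathbf{B}(h)$ from that of $\mathbf{C}(h)$. The only cosmetic difference is that you index the sign product as $\prod_\alpha(-1)^{\gamma-\alpha-1}$ while the paper writes $\prod_\alpha(-1)^{\alpha}$, which agree after reindexing.
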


\begin{proof}
Since $\mathbf{C}(h) = \mathbf{A}(h) \mathbf{D} \mathbf{B}(h)$ and $\mathbf{A}(h)$, $\mathbf{D}$ and $\mathbf{C}(h)$ are regular, it is clear that $\mathbf{B}(h)$ must also be regular. Moreover, since $\det \mathbf{A}(h) = 1$ and
\begin{equation*}
\det \mathbf{D} = \prod_{\alpha = 0}^{\gamma - 1} (-1)^{\alpha} = (-1)^{\sum_{\alpha = 0}^{\gamma - 1} \alpha} = (-1)^{\gamma (\gamma - 1)/2},
\end{equation*}
where we have once more used the arithmetic series formula, we get the desired result.
\end{proof}


\section*{Acknowledgments}
D. Mart{\'\i}n de Diego acknowledges financial support from the Spanish Ministry of Science and Innovation, under grant PID2019-106715GB-C21 and  the ``Severo Ochoa Programme for Centres of Excellence'' in R\&D (CEX2019-000904-S).
S.\ Ferraro acknowledges financial support from PICT 2019-00196, FONCyT, Argentina, and PGI 2018, UNS.



\newcommand\oneletter[1]{#1}\newcommand\Yu{Yu}

\end{document}